\documentclass[12pt]{amsart}

\usepackage{amsmath, amssymb}
\usepackage{mathtools}
\usepackage{bbm}
\usepackage[utf8]{inputenc}
\usepackage{calrsfs}
\DeclareMathAlphabet{\pazocal}{OMS}{zplm}{m}{n}
\usepackage{overpic}
\usepackage{hyperref}
\usepackage{soul}

\makeatletter
\@namedef{subjclassname@2020}{\textup{2020} Mathematics Subject Classification}
\makeatother

\usepackage{float}

\setlength{\parindent}{0em} 
\hoffset=-0.5in
\textwidth=6in

\newtheorem{theorem}{Theorem}[section]
\newtheorem{lemma}[theorem]{Lemma}
\newtheorem{prop}[theorem]{Proposition}
\newtheorem{cor}[theorem]{Corollary}

\theoremstyle{definition}
\newtheorem{defi}[theorem]{Definition}

\theoremstyle{remark}
\newtheorem{remark}[theorem]{Remark}

\numberwithin{equation}{section}

\newcommand{\R}{\mathbb R}

\newcommand{\Rd}{{\mathbb R^d}}

\newcommand{\E}{{\mathbb E}}
\newcommand{\Pb}{\mathbb P}
\newcommand{\N}{{\mathbb N}}

\newcommand{\Z}{{\mathbb Z}}
\newcommand{\m}{\text{-}}
\newcommand{\eps}{\varepsilon}
\newcommand{\ph}{\varphi}

\newcommand{\eqfd}{\overset{\mathrm{fd}}{=}}
\newcommand{\eqd}{\overset{\mathrm{d}}{=}}

\DeclareMathOperator{\Ca}{Cap}

\allowdisplaybreaks

\begin{document}

\title[Parabolic Fractal Geometry of Stable L\'{e}vy Processes with Drift]{Parabolic Fractal Geometry of Stable L\'{e}vy Processes with Drift}

\author{Peter Kern}
\address{Peter Kern, Mathematical Institute, Faculty of Mathematics and Natural Sciences, Heinrich Heine University D\"usseldorf, Universit\"atsstr. 1, D-40225 D\"usseldorf, Germany}
\email{kern@hhu.de}

\author{Leonard Pleschberger}
\address{Leonard Pleschberger, Mathematical Institute, Faculty of Mathematics and Natural Sciences, Heinrich Heine University D\"ussel\-dorf, Universit\"atsstr. 1, D-40225 D\"usseldorf, Germany}
\email{leonard.pleschberger@hhu.de}

\date{\today}

\begin{abstract}
We explicitly calculate the Hausdorff dimension of the graph and range of an isotropic stable L\'{e}vy process $X$ plus deterministic drift function $f$. For that purpose we use a restricted version of the genuine Hausdorff dimension which is called the parabolic Hausdorff dimension. It turns out that covers by parabolic cylinders are optimal for treating self-similar processes, since their distinct non-linear scaling between time and space geometrically matches the self-similarity of the processes. We provide explicit formulas for the Hausdorff dimension of the graph and the range of $X+f$. In sum the parabolic Hausdorff dimension of the drift term $f$ alone contributes to the Hausdorff dimension of $X+f$.  Furthermore, we derive some formulas and bounds for the parabolic Hausdorff dimension. 

\end{abstract}

\keywords{Stable L\'evy process, drift function, fractal path behavior, parabolic Hausdorff dimension, self-similarity}
\subjclass[2020]{Primary 60G51; Secondary 28A78, 28A80, 60G17, 60G18, 60G52.}

\maketitle

\baselineskip=18pt

\section{Introduction}

Let $X = (X_t)_{t \geq 0}$ be a L\'{e}vy process in $\Rd$ which is a stochastic process on some probability space $(\Omega,\pazocal{A},\Pb)$ with the following properties:
\begin{enumerate}
\item[(i)] The process $\Pb$-almost surely starts in $0 \in \Rd$.
\item[(ii)] $X$ possesses independent increments, i.e.\ for any $0 \leq t_0 < \dots < t_n$, the random variables $X_{t_0},X_{t_1} - X_{t_0},\dots,X_{t_n}-X_{t_{n-1}}$ are independent.
\item[(iii)] $X$ has stationary increments, i.e.\ for all $t,h\geq0$ the distribution of the increment $X_{t+h}-X_t\eqd X_{h}$ does not depend on $t$, where the symbol $\eqd$ denotes equality in distribution. 
\item[(iv)] $X$ is stochastically continuous, i.e.\ $\Pb(\|X_{t+h} - X_t\| > \eps) \rightarrow 0$ as $h \rightarrow 0$ for any $t\geq0$ and $\eps > 0$.
\end{enumerate}

Additionally assuming self-similarity, the L\'evy process is called stable. In this paper we only deal with the special case of an isotropic $\alpha$-stable L\'evy process in which case the self-similarity is given by 
\begin{equation}\label{eq:X_ct}
(X_{\, c \cdot t})_{t \geq 0} \ \eqfd \ (c^{1/\alpha} \cdot X_t)_{t \geq 0}\quad \text{ for all }c>0,
\end{equation}
where $\eqfd$ denotes equality of all finite-dimensional distributions which characterise the stochastic processes in law. In this case the Hurst index $H = 1/ \alpha$ is restricted to $H\geq\frac12$, i.e. $\alpha\in(0,2]$ and the isotropic $\alpha$-stable L\'evy process is also uniquely determined in law by the characteristic function $\E\big[{e^{i\langle \xi, X_t \rangle}}\big]= e^{-t\, C\cdot \|\xi\|^\alpha}$
with L\'{e}vy exponent $\Psi(\xi) = C\cdot \|\xi\|^\alpha$ for some constant $C>0$. In case of $\alpha = 2$ we obtain Brownian motion. For details on stable L\'evy processes we refer to the monograph \cite{S99}.

The integrability of $\exp(-t\,C\cdot \|\xi\|^\alpha)$ ensures the applicability of the Fourier inversion formula. Therefore, for any $t>0$ the random variable $X_{t}$ possesses the continuous density function
\begin{equation*}\label{stabledens}
x\mapsto p(t,x) := (2\pi)^{-d}\int_\Rd e^{-i\langle \xi, x \rangle} e^{-t \Psi(\xi)}\, \textnormal{d}\xi = (2\pi)^{-d}\int_\Rd e^{-i\langle \xi, x \rangle} e^{-t\,C\cdot \|\xi\|^\alpha}\, \textnormal{d}\xi
\end{equation*}
which for $\alpha\in(0,2)$ cannot be expressed in simple terms but belongs to $C^\infty(\Rd)$ with all derivatives in $L^1(\Rd)\cap C_0(\Rd)$; see \cite{S99}. Furthermore, from the self-similarity property  (\ref{eq:X_ct}) it easily follows that
\begin{equation}\label{eq_p_selfsimilar}
p(t,x) = t^{-d/\alpha} \cdot p\left(1,\tfrac{x}{t^{1/\alpha}}\right)\quad\text{ for all $t>0$ and $x\in\Rd$.}
\end{equation}
Thus we define $p(x) := p(1,x)$ as the density at time $t=1$ and by Theorem 2.1 in \cite{BG60} we have the tail estimate
\begin{equation}\label{eq:p_tail_estimates}
p(x) \in \pazocal{O}\big(\|x\|^{-d-\alpha}\big) \quad  \text{ as } \|x\|\rightarrow \infty.
\end{equation}
This density is bounded and rotationally symmetric, i.e.\ writing $x=ry$ with $r = \|x\| > 0$ and $y = x/\|x\| \in S^{d-1}$ the density $p(x) = p(r y)$ does not depend on $y$ and due to unimodality, see \cite{S99}, $r \mapsto p(r y)$ is non-increasing.\\

Our aim is to analyse isotropic stable L\'{e}vy processes plus (arbitrary) Borel measurable drift functions by methods from fractal geometry. In particular, we determine formulas for the Hausdorff dimension of the graph and the range of an isotropic $\alpha$-stable L\'{e}vy process plus drift. Starting with Brownian motion, in the past decades much effort has been made to explicitly calculate the Hausdorff dimension of the range and the graph of stable L\'evy processes with an even more general self-similarity relation than \eqref{eq:X_ct}, e.g.\ see \cite{BG60b,BG62,PT69,Hen73,LX94,BMS,MX05,W17} in chronological order or the excellent review article \cite{X04}. 
Some of these results were extended to Markov processes, but require additional regularity assumptions for the transition probabilities in the non-homogeneous situation, see \cite{X04} and the references therein for details.
Only recently, Peres and Sousi started to deal with Hausdorff dimension results of self-similar processes with an additional drift function by considering Brownian motion \cite{PS12} and fractional Brownian motion \cite{PS16}. We will follow the method in \cite{PS16} to prove corresponding results for isotropic stable L\'evy processes. The restriction to isotropic stable L\'evy processes is due to rotational symmetry which is needed in the proof method. Compared to the method in \cite{PS16} we have to overcome with some additional issues:
\begin{enumerate}
\item An isotropic $\alpha$-stable L\'evy process for $\alpha \in(0,2)$ is a pure jump process. Hence we cannot use H\"older continuity of the sample paths to derive upper bounds for the Hausdorff dimension as in case of fractional Brownian motion in \cite{PS16}. Instead, we will use the covering Lemma of Pruitt and Taylor (Lemma 6.1 in \cite{PT69}) for the L\'evy process together with a deterministic cover of the drift function.
\item The Hurst index $H=1/\alpha$ of an isotropic $\alpha$-stable L\'evy process is restricted to $H\geq 1/2$, whereas $H\in(0,1)$ for fractional Brownian motion. It will turn out that the case $H\geq 1$ needs different arguments than the blueprint given for $H\in(0,1)$ in \cite{PS16}. The reason is that diameters of parabolic cylinders introduced in \eqref{eq:1/alpha} are determined by the size of the cylinders in the space domain for $H\in(0,1)$, whereas for $H>1$ the size of the cylinders in the time domain dominates.
\item By \eqref{eq:p_tail_estimates} the tail of the probability density of an isotropic $\alpha$-stable L\'evy process falls off as a power function, whereas the normal density of fractional Brownian motion decreases exponentially fast. The power law tails require more delicate estimates of energy kernels in the potential theoretic method to derive lower bounds of the Hausdorff dimension.
\end{enumerate}
We introduce a generalised version of the genuine Hausdorff dimension which is called the $\alpha$-parabolic Hausdorff dimension in Section 2. We also give a priori upper and lower bounds for the $\alpha$-parabolic Hausdorff dimension in terms of the genuine Hausdorff dimension. It turns out that covers by $\alpha$-parabolic cylinders are optimal for treating self-similar processes, since their distinct non-linear scaling between time and space geometrically matches the self-similarity of the processes. We provide explicit formulas for the Hausdorff dimension of the graph and the range of an isotropic $\alpha$-stable L\'evy process plus Borel measurable drift function in Section 3 and defer the proofs to Sections 4--6. In sum the $\alpha$-parabolic Hausdorff dimension of the drift term $f$ alone contributes to the Hausdorff dimension of $X+f$. We derive new formulas and estimates for the $\alpha$-parabolic Hausdorff dimension of constant functions and Hölder continuous functions in Section 7.

\section{On Parabolic Fractal Geometry}

We start with the definition of the $\alpha$-parabolic Hausdorff dimension which inheres a distinct non-linear scaling between time and space.
Throughout this paper the symbol $|\cdot|$ denotes the diameter of a set in real Euclidean space. We use the same symbol for the absolute value of reals, without fear to cause confusion, and denote by $\|\cdot\|$ the Euclidean norm of vectors in $\Rd$ or $\R^{1+d}$. For real functions $f,g$ the symbol $f\lesssim g$ denotes the existence of a constant $C\in(0,\infty)$ not depending on the variables such that $f\leq C\cdot g$ and $f\asymp g$ is short for $f\lesssim g$ together with $g\lesssim f$.

\begin{defi} \label{def:parabolic_measure}
Let $A\subseteq \R^{1+d}$ be an any set and $\alpha,\beta \in (0,\infty)$. The $\alpha$-\emph{parabolic $\beta$-Hausdorff (outer) measure} of $A$ is defined as
\begin{gather*}
\mathcal{P}^\alpha\m \pazocal{H}^\beta(A) := \lim_{\delta \downarrow 0}\ \inf \Bigg\{\sum_{k=1}^\infty |\mathsf{P}_k|^\beta: A\subseteq\bigcup_{n=1}^\infty \mathsf{P}_k,\ \mathsf{P}_k \in \mathcal{P}^\alpha,\ |\mathsf{P}_k|\leq \delta \Bigg\}, \label{def:Hausdorff_measure_rule}
\end{gather*}
where the \emph{$\alpha$-parabolic cylinders} $(\mathsf{P}_n)_{n\in\N}$ are contained in
\begin{equation}\label{eq:1/alpha}
\mathcal{P}^\alpha :=\Bigg \{ [t,t+c]\times \prod_{j=1}^d\ [x_{j},x_{j}+c^{1/\alpha}]: t,\,  x_j \in \R,\ c \in (0,1] \Bigg\}.
\end{equation}
We define the \emph{$\alpha$-parabolic Hausdorff dimension} of $A$ as
\begin{equation*}
\mathcal{P}^\alpha\m\dim A := \inf \big\{\beta>0 : \mathcal{P}^\alpha\m\pazocal{H}^\beta(A) =0 \big\} = \sup\, \big\{\beta>0 : \mathcal{P}^\alpha\m\pazocal{H}^\beta(A) = \infty \big\}.
\end{equation*}
The case $\alpha = 1$ equals the genuine Hausdorff dimension which is simply denoted by the symbol $\dim$.
\end{defi}

Let us compare the $\alpha$-parabolic Hausdorff dimension to other parabolic Hausdorff dimensions appearing in the literature. For the Gaussian case, Taylor and Watson \cite{TW85} introduced the \emph{parabolic Hausdorff dimension}  $\mathcal{P}\m\dim$ in the same way we did for $\alpha=2$ in order to determine polar sets for the heat equation. On the contrary, for $H\in(0,1]$ Peres and Sousi \cite{PS16} defined the $H$\emph{-parabolic $\beta$-Hausdorff content}
\begin{equation*}\label{def:Hausdorff_Content}
\Psi_H^\beta(A) := \inf \bigg\{\sum_{k=1}^\infty c_k^\beta: A\subseteq \bigcup_{k=1}^\infty \mathsf{P}_k,\ \mathsf{P}_k \in \mathcal{P}^{1/H} \bigg\},
\end{equation*}
where the covering sets
\begin{equation*}
\mathsf{P}_k=[t_{k},t_{k}+c_{k}]\times \prod_{j=1}^d\ [x_{j,k},x_{j,k}+c_{k}^{H}]
\end{equation*}
are from the class of $1/H$-parabolic cylinders $\mathcal{P}^{1/H}$ from \eqref{eq:1/alpha} with $|\mathsf{P}_k|\asymp c_{k}^{H}$. This results in what they call the $H$-\emph{parabolic Hausdorff dimension}
\begin{equation*}
\dim_{\Psi,H} A := \inf \big\{\beta>0: \Psi_H^\beta(A) = 0 \big\} = \sup \big\{\beta>0: \Psi_H^\beta(A) > 0 \big\}.
\end{equation*}

\begin{prop}\label{pro:P2=2P}
Let $A\subseteq \R^{1+d}$ be an arbitrary set, $\alpha = 1/H \in [1,\infty)$ and $\beta \in (0,\infty)$. Then one has
\begin{equation}\label{eq:PAlpha_PAlphaH}
\mathcal{P}^\alpha\m\dim A = (\dim_{\Psi,H} A) / H,
\end{equation}
i.e. Peres and Sousi's $H$-parabolic Hausdorff dimension differs from our $\alpha$-parabolic Hausdorff dimension by a constant factor. 
\end{prop}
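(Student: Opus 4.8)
The plan is to reduce \eqref{eq:PAlpha_PAlphaH} to one elementary computation of cylinder diameters together with the standard principle that a Hausdorff \emph{content} and its associated Hausdorff \emph{measure} have the same critical exponent. First I would compute the diameter of a generic cylinder $\mathsf P=[t,t+c]\times\prod_{j=1}^d[x_j,x_j+c^{1/\alpha}]\in\mathcal P^\alpha$, namely $|\mathsf P|=(c^2+d\,c^{2/\alpha})^{1/2}$. Writing $H=1/\alpha\in(0,1]$, the constraints $c\in(0,1]$ and $H\le 1$ force $c^2\le c^{2H}$, so that $\sqrt d\,c^{H}\le|\mathsf P|\le\sqrt{1+d}\,c^{H}$, i.e.\ $|\mathsf P|\asymp c^{H}$ with constants depending only on $d$; equivalently $c\asymp|\mathsf P|^{\alpha}$, and hence $\sum_k c_k^{\beta}\asymp\sum_k|\mathsf P_k|^{\beta/H}$ for any countable collection of such cylinders and any $\beta>0$. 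This is precisely the regime in which Peres and Sousi's normalisation $|\mathsf P_k|\asymp c_k^H$ is valid; it breaks down for $H>1$, where $|\mathsf P|\asymp c$ instead, which reflects the time-versus-space dichotomy mentioned in point~(2) of the introduction and is the reason the statement is restricted to $\alpha\ge 1$.

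Writing $\mathcal P^\alpha\m\pazocal H^{\beta}_\delta(A)$ for the infimum before the limit in Definition~\ref{def:parabolic_measure} (the one over covers by cylinders of diameter at most $\delta$) and using that $\mathcal P^{1/H}=\mathcal P^\alpha$, I would next establish, for every fixed $\beta\in(0,\infty)$, the equivalence
\begin{equation*}
\mathcal P^\alpha\m\pazocal H^{\beta/H}(A)=0\quad\Longleftrightarrow\quad\Psi_H^{\beta}(A)=0.
\end{equation*}
For ``$\Leftarrow$'': given $\eps>0$, choose a cover $(\mathsf P_k)$ of $A$ by cylinders of $\mathcal P^\alpha$ with $\sum_k c_k^{\beta}<\eps$; then $c_k<\eps^{1/\beta}$ for each $k$, hence $|\mathsf P_k|\lesssim\eps^{H/\beta}$, so for any prescribed $\delta>0$ this cover is admissible for $\mathcal P^\alpha\m\pazocal H^{\beta/H}_\delta(A)$ once $\eps$ is small enough, giving $\mathcal P^\alpha\m\pazocal H^{\beta/H}_\delta(A)\le\sum_k|\mathsf P_k|^{\beta/H}\lesssim\sum_k c_k^{\beta}<\eps$; letting $\eps\downarrow0$ shows $\mathcal P^\alpha\m\pazocal H^{\beta/H}_\delta(A)=0$ for all $\delta>0$, hence $\mathcal P^\alpha\m\pazocal H^{\beta/H}(A)=0$. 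For ``$\Rightarrow$'': if $\mathcal P^\alpha\m\pazocal H^{\beta/H}(A)=0$, then already $\mathcal P^\alpha\m\pazocal H^{\beta/H}_1(A)=0$, so for each $\eps>0$ there is an admissible cover with $\sum_k|\mathsf P_k|^{\beta/H}<\eps$; since $c_k\lesssim|\mathsf P_k|^{1/H}$, this same family is an admissible cover for $\Psi_H^{\beta}$ with $\sum_k c_k^{\beta}\lesssim\eps$, so $\Psi_H^{\beta}(A)=0$.

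Finally, substituting $\beta'=\beta/H$ in this equivalence identifies the two level sets $\{\beta'>0:\mathcal P^\alpha\m\pazocal H^{\beta'}(A)=0\}=\tfrac1H\{\beta>0:\Psi_H^{\beta}(A)=0\}$, and taking infima yields $\mathcal P^\alpha\m\dim A=\tfrac1H\dim_{\Psi,H}A=(\dim_{\Psi,H}A)/H$, which is \eqref{eq:PAlpha_PAlphaH}. I expect the only genuinely delicate point to be the mismatch exploited in the equivalence above: $\Psi_H^{\beta}$ places no upper bound on the sizes $c_k$ of the covering cylinders, whereas $\mathcal P^\alpha\m\pazocal H^{\beta}$ does and then lets this bound tend to $0$. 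This is harmless precisely because a cover with small total weight $\sum_k c_k^{\beta}$ automatically consists of cylinders of small diameter, so the $\delta$-truncation is met for free — the same mechanism that makes Hausdorff content and Hausdorff measure define a common dimension. Everything else amounts to bookkeeping around the diameter estimate of the first step.
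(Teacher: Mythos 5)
Your argument is correct and follows essentially the same route as the paper: the diameter comparison $|\mathsf P|\asymp c^{H}$ for $\alpha\ge 1$, combined with the observation that a parabolic Hausdorff content and the corresponding measure have the same null sets. The only difference is that you prove the content-versus-measure equivalence inline (via the observation that small total weight forces small diameters), whereas the paper factors it through an auxiliary content $\Phi^\beta_\alpha$ and cites Proposition 4.9 of M\"orters--Peres for exactly that step.
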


\begin{proof}
First we introduce the auxiliary \emph{$\alpha$-parabolic $\beta$-Hausdorff content} 
\begin{equation*}
\Phi^\beta_\alpha (A) := \inf \Bigg\{\sum_{k=1}^\infty |\mathsf{P}_k|^\beta: A\subseteq \bigcup_{k=1}^\infty \mathsf{P}_k,\ \mathsf{P}_k \in \mathcal{P}^{\alpha} \Bigg\}.
\end{equation*}
Since for $\alpha \geq 1$ and $\mathsf{P}_k=[t_{k},t_{k}+c_{k}]\times \prod_{j=1}^d\ [x_{j,k},x_{j,k}+c_{k}^{1/\alpha}] \in \mathcal{P}^{\alpha}$ we have $|\mathsf{P}_k|^\beta \asymp \ c_k^{\beta / \alpha} =\ c_k^{H\cdot \beta}$ one has
\begin{equation}\label{eq:PhiBeta=PsiBeta/H}
\Phi^\beta_\alpha (A) = \Psi_H^{\beta\cdot H}(A).
\end{equation}
Following the arguments in Proposition 4.9 of \cite{MP10} one gets
$\mathcal{P}^\alpha\m\pazocal{H}^\beta (A)=0$ if and only if $\Phi_\alpha^\beta(A) = 0$,
which together with \eqref{eq:PhiBeta=PsiBeta/H} shows (\ref{eq:PAlpha_PAlphaH}).
\end{proof}

The $\alpha$-parabolic Hausdorff dimension fulfils the following countable stability property which easily follows from monotonicity and $\sigma$-subadditivity of the $\alpha$-parabolic Hausdorff measure as argued for the genuine Hausdorff dimension on page 29 in \cite{Fal90}.

\begin{prop}\label{prop:Countable_Stability}
For every $\alpha \in (0,\infty)$ and $(A_{k})_{k\in\N}\subseteq\R^{1+d}$ the $\alpha$-parabolic Hausdorff dimension fulfils the \emph{countable stability property}
\begin{equation*}
\mathcal{P}^\alpha\m\dim \bigcup_{k=1}^\infty A_k = \sup_{k\in\N}\ \mathcal{P}^\alpha\m\dim A_k.
\end{equation*}
\end{prop}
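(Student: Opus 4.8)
The plan is to run the classical argument used for the genuine Hausdorff dimension (page~29 of \cite{Fal90}), nothing in it being special to the particular shape of the covering sets. The two structural facts needed are \emph{monotonicity} and \emph{$\sigma$-subadditivity} of the set function $\mathcal{P}^\alpha\m\pazocal{H}^\beta$. For a fixed $\delta>0$ write $\mathcal{P}^\alpha\m\pazocal{H}^\beta_\delta(A)$ for the infimum appearing inside Definition~\ref{def:parabolic_measure}, so that $\mathcal{P}^\alpha\m\pazocal{H}^\beta(A)=\sup_{\delta>0}\mathcal{P}^\alpha\m\pazocal{H}^\beta_\delta(A)$, the family being monotone in $\delta$ because shrinking $\delta$ only restricts the class of admissible covers. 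Monotonicity in $A$ is then immediate: if $A\subseteq B$, every $\delta$-fine cover of $B$ by members of $\mathcal{P}^\alpha$ covers $A$ as well, so $\mathcal{P}^\alpha\m\pazocal{H}^\beta_\delta(A)\le\mathcal{P}^\alpha\m\pazocal{H}^\beta_\delta(B)$ for all $\delta$, hence $\mathcal{P}^\alpha\m\pazocal{H}^\beta(A)\le\mathcal{P}^\alpha\m\pazocal{H}^\beta(B)$ and consequently $\mathcal{P}^\alpha\m\dim A\le\mathcal{P}^\alpha\m\dim B$.

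For $\sigma$-subadditivity I would diagonalise in the usual way. Given $(A_k)_{k\in\N}$, $\delta>0$ and $\eps>0$, choose for each $k$ a cover $A_k\subseteq\bigcup_{\ell}\mathsf{P}_{k,\ell}$ with $\mathsf{P}_{k,\ell}\in\mathcal{P}^\alpha$, $|\mathsf{P}_{k,\ell}|\le\delta$ and $\sum_{\ell}|\mathsf{P}_{k,\ell}|^\beta\le\mathcal{P}^\alpha\m\pazocal{H}^\beta_\delta(A_k)+\eps\,2^{-k}$; such covers exist because $\mathcal{P}^\alpha$ contains cylinders of arbitrarily small diameter — a cylinder at scale $c\in(0,1]$ has diameter $(c^2+d\,c^{2/\alpha})^{1/2}\to0$ as $c\downarrow0$ for every $\alpha\in(0,\infty)$ — whose translates cover $\R^{1+d}$. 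The doubly indexed family $(\mathsf{P}_{k,\ell})_{k,\ell}$ is then a $\delta$-fine cover of $\bigcup_kA_k$ by members of $\mathcal{P}^\alpha$, so
\begin{equation*}
\mathcal{P}^\alpha\m\pazocal{H}^\beta_\delta\Big(\bigcup_{k=1}^\infty A_k\Big)\ \le\ \sum_{k=1}^\infty\sum_{\ell}|\mathsf{P}_{k,\ell}|^\beta\ \le\ \sum_{k=1}^\infty\mathcal{P}^\alpha\m\pazocal{H}^\beta_\delta(A_k)+\eps .
\end{equation*}
Letting $\eps\downarrow0$, then $\delta\downarrow0$, and using $\mathcal{P}^\alpha\m\pazocal{H}^\beta_\delta(A_k)\uparrow\mathcal{P}^\alpha\m\pazocal{H}^\beta(A_k)$ as $\delta\downarrow0$ together with monotone convergence on the right-hand sum, yields $\mathcal{P}^\alpha\m\pazocal{H}^\beta(\bigcup_kA_k)\le\sum_k\mathcal{P}^\alpha\m\pazocal{H}^\beta(A_k)$.

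With these two facts the statement follows in a few lines. Since $A_j\subseteq\bigcup_kA_k$, monotonicity of the dimension gives $\mathcal{P}^\alpha\m\dim A_j\le\mathcal{P}^\alpha\m\dim\bigcup_kA_k$ for every $j$, hence $\sup_k\mathcal{P}^\alpha\m\dim A_k\le\mathcal{P}^\alpha\m\dim\bigcup_kA_k$. Conversely, assume $\sup_k\mathcal{P}^\alpha\m\dim A_k<\infty$ (otherwise there is nothing to show) and let $\beta$ exceed it. Then $\beta>\mathcal{P}^\alpha\m\dim A_k$ for every $k$, so by Definition~\ref{def:parabolic_measure} — together with the elementary estimate $\mathcal{P}^\alpha\m\pazocal{H}^{\beta'}_\delta(A)\le\delta^{\beta'-\beta}\,\mathcal{P}^\alpha\m\pazocal{H}^{\beta}_\delta(A)$ for $\beta'\ge\beta$, which shows that $\{\beta>0:\mathcal{P}^\alpha\m\pazocal{H}^\beta(A_k)=0\}$ is an upward half-line — we get $\mathcal{P}^\alpha\m\pazocal{H}^\beta(A_k)=0$ for every $k$; then $\sigma$-subadditivity forces $\mathcal{P}^\alpha\m\pazocal{H}^\beta(\bigcup_kA_k)=0$, i.e.\ $\mathcal{P}^\alpha\m\dim\bigcup_kA_k\le\beta$. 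Taking the infimum over all such $\beta$ finishes the proof. The argument is entirely routine; the only genuinely parabolic ingredient — and hence the only point deserving the one line of verification above — is the richness of $\mathcal{P}^\alpha$, namely that it contains $\alpha$-parabolic cylinders of arbitrarily small diameter covering all of $\R^{1+d}$, which is what guarantees that the $\delta$-restricted infima are taken over nonempty collections and that the diagonal construction is legitimate.
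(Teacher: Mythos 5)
Your proposal is correct and follows exactly the route the paper indicates: the paper gives no written-out proof but states that the result "easily follows from monotonicity and $\sigma$-subadditivity of the $\alpha$-parabolic Hausdorff measure as argued \ldots on page 29 in \cite{Fal90}," and your argument is precisely that classical Falconer argument with the routine details (diagonal cover, the half-line property of $\{\beta:\mathcal{P}^\alpha\m\pazocal{H}^\beta(A)=0\}$, and the richness of $\mathcal{P}^\alpha$) filled in. Nothing further is needed.
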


Moreover, we derive the following a priori estimates for the Hausdorff dimension in terms of the parabolic Hausdorff dimension. 

\begin{theorem}\label{le:General_Upper_Bound_Hausdorff}
Let $A\subseteq \R^{1+d}$ be any set.  Let $\phi_\alpha = \mathcal{P}^\alpha\m\dim A$. Then one has
\begin{equation*}
\dim A \leq
\begin{cases}
\phi_\alpha \ \wedge \ \left(\alpha \cdot \phi_\alpha + 1 - \alpha\right), & \alpha \in (0,1], \\
\phi_\alpha \ \wedge \ \left(\frac{1}{\alpha } \cdot \phi_\alpha + \big(1-\frac{1}{\alpha}\big)\cdot d\right) , & \alpha \in [1,\infty)
\end{cases}
\end{equation*}
and
\begin{equation*}
\dim A \geq
\begin{cases}
\phi_\alpha + \big(1-\frac{1}{\alpha}\big)\cdot d, & \alpha \in (0,1],\\
\phi_\alpha + 1 - \alpha, & \alpha \in [1,\infty).
\end{cases}
\end{equation*}
\end{theorem}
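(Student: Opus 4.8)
The plan is to derive all four inequalities by translating back and forth between covers of $A$ by $\alpha$-parabolic cylinders and covers by Euclidean cubes, relying on the standard fact that the genuine Hausdorff measure $\pazocal{H}^\gamma$, and hence $\dim$, may be computed with cubes rather than arbitrary sets. The only geometric ingredient is that a cylinder $\mathsf{P}\in\mathcal{P}^\alpha$ with parameter $c\in(0,1]$ has edge length $c$ in the time coordinate and $c^{1/\alpha}$ in each of the $d$ space coordinates, so $|\mathsf{P}|\asymp c$ when $\alpha\in(0,1]$ (the time edge dominates) and $|\mathsf{P}|\asymp c^{1/\alpha}$ when $\alpha\in[1,\infty)$ (the space edges dominate), with constants depending only on $d$. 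From this one reads off, for $\alpha\in[1,\infty)$, that such a $\mathsf{P}$ is covered by $\asymp c^{(1/\alpha-1)d}$ cubes of side $\asymp c$, each of diameter $\lesssim|\mathsf{P}|$, while a cube of side $\ell\in(0,1]$ is covered by $\asymp\ell^{1-\alpha}$ cylinders of parameter $\ell^\alpha$, each of diameter $\asymp\ell$; symmetrically, for $\alpha\in(0,1]$, a $\mathsf{P}$ with parameter $c$ is covered by $\asymp c^{1-1/\alpha}$ cubes of side $\asymp c^{1/\alpha}$, and a cube of side $\ell$ is covered by $\asymp\ell^{(1-1/\alpha)d}$ cylinders of parameter $\ell$, each of diameter $\asymp\ell$. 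Since every $\mathsf{P}\in\mathcal{P}^\alpha$ is in particular a subset of $\R^{1+d}$, every $\mathcal{P}^\alpha$-cover is an ordinary cover, so $\pazocal{H}^\beta(A)\le\mathcal{P}^\alpha\m\pazocal{H}^\beta(A)$ and hence $\dim A\le\phi_\alpha$ for every $\alpha$; this supplies the term $\phi_\alpha$ appearing in both minima.

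For the second term in each minimum I would fix $\beta>\phi_\alpha$, so that for all $\delta,\eps>0$ there is a cover of $A$ by cylinders $\mathsf{P}_k\in\mathcal{P}^\alpha$ with parameters $c_k$, $|\mathsf{P}_k|\le\delta$ and $\sum_k|\mathsf{P}_k|^\beta<\eps$. Replacing each $\mathsf{P}_k$ by its cube cover from the previous paragraph gives a cover of $A$ by cubes of diameter $\lesssim\delta$ whose $\gamma$-sum (the sum of $\gamma$th powers of the diameters) is $\asymp\sum_k c_k^{(1/\alpha-1)d+\gamma}$ when $\alpha\ge1$ and $\asymp\sum_k c_k^{(1-1/\alpha)+\gamma/\alpha}$ when $\alpha\le1$. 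Choosing $\gamma$ so that the exponent of $c_k$ matches that of $|\mathsf{P}_k|^\beta$ — namely $\gamma=\tfrac1\alpha\beta+(1-\tfrac1\alpha)d$ for $\alpha\ge1$ and $\gamma=\alpha\beta+1-\alpha$ for $\alpha\le1$ — turns this $\gamma$-sum into $\asymp\sum_k|\mathsf{P}_k|^\beta<C\eps$, so $\pazocal{H}^\gamma(A)=0$, i.e.\ $\dim A\le\gamma$; letting $\beta\downarrow\phi_\alpha$ gives the asserted upper bounds.

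The lower bounds I would prove by running this in reverse: fix $\gamma>\dim A$ and take a cover of $A$ by cubes $U_k$ of side $\ell_k\le\delta$ with $\sum_k\ell_k^\gamma<\eps$; replacing each $U_k$ by its cylinder cover yields cylinders in $\mathcal{P}^\alpha$ of diameter $\lesssim\delta$ whose $\beta$-sum is $\asymp\sum_k\ell_k^{(1-\alpha)+\beta}$ for $\alpha\ge1$ and $\asymp\sum_k\ell_k^{(1-1/\alpha)d+\beta}$ for $\alpha\le1$. Taking $\beta$ so that the exponent of $\ell_k$ becomes $\gamma$, that is $\beta=\gamma+\alpha-1$ for $\alpha\ge1$ and $\beta=\gamma-(1-1/\alpha)d$ for $\alpha\le1$ (and in both cases $\beta\ge\gamma>0$), makes this $\beta$-sum $<C\eps$, so $\mathcal{P}^\alpha\m\pazocal{H}^\beta(A)=0$ and hence $\phi_\alpha\le\beta$; rearranging and letting $\gamma\downarrow\dim A$ gives the two claimed lower bounds.

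The whole argument is a change-of-gauge (net-measure) comparison in the spirit of Proposition~4.9 in \cite{MP10}, and I do not expect a genuine obstacle. The points that require care are the case distinction according to whether $\alpha\le1$ or $\alpha\ge1$ — i.e.\ whether the time edge or the space edges control the diameter of a parabolic cylinder — keeping the comparison constants uniform (they depend only on $d$ and $\alpha$), verifying that the auxiliary covers genuinely have vanishing mesh, and getting the four exponent identities above exactly right.
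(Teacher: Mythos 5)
Your proposal is correct and follows essentially the same route as the paper: the trivial inclusion $\pazocal{H}^\beta\le\mathcal{P}^\alpha\m\pazocal{H}^\beta$ for the first term of each minimum, and then the four cover-conversion counts (cylinder covered by $\asymp c^{1-1/\alpha}$ cubes of side $c^{1/\alpha}$ for $\alpha\le1$, by $\asymp c^{(1/\alpha-1)d}$ cubes of side $c$ for $\alpha\ge1$, and the two reverse counts), with exactly the exponent identities the paper uses. The only cosmetic difference is the parametrization — you fix $\beta>\phi_\alpha$ (resp.\ $\gamma>\dim A$) and solve for the resulting exponent, while the paper fixes the target exponent and verifies the hypothesis on the cover — which is an equivalent bookkeeping choice.
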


\begin{proof}
\emph{(i)} Let $\alpha \in (0,\infty)$. By the definition of the $\alpha$-parabolic $\beta$-Hausdorff measure there are only coverings by $\mathcal{P}^\alpha$-sets permitted. So besides $\mathcal{P}^\alpha$ there could exist more efficient covers of $A$ with respect to their shape. Therefore $\pazocal{H}^\beta(A)\leq \mathcal{P}^\alpha\m \pazocal{H}^\beta(A)$ which implies $\dim A \leq \phi_\alpha$.

\emph{(ii)} Let $\alpha \in (0,1]$ and $\eps > 0$ be arbitrary. If $\beta > \alpha \cdot \phi_\alpha + 1 - \alpha$, then $\frac{\beta}{\alpha} + ( 1 - \frac{1}{\alpha})> \phi_\alpha$. 
Hence we can cover $A$ by the $\alpha$-parabolic cylinders
\begin{equation*}
(\mathsf{P}_{k})_{k\in\N} = \Bigg(\big[t_{k}, \, t_{k} + c_k\big] \times \prod_{j=1}^d \Big[x_{j,k}, \, x_{j,k}+c_k^{1/\alpha}\Big]\Bigg)_{k\in\N} \subseteq \mathcal{P}^\alpha
\end{equation*}
with $\big|\mathsf{P}_{k}\big| \asymp c_k$ for every $k \in \N$ such that 
$\sum_{k=1}^\infty \big|\mathsf{P}_{k}\big|^{\beta/\alpha + 1 - 1/\alpha} \leq \eps$.
Each $\mathsf{P}_{k}$ can be covered by $\Big \lceil c_k^{1-1/\alpha} \Big \rceil$ hypercubes $\square_{k}$ with sidelength $c_k^{1/\alpha}$. Hence
\begin{align*}
\pazocal{H}^\beta(A) &\leq\ \sum_{k=1}^\infty \Big \lceil c_k^{1-1/\alpha} \Big \rceil \cdot \big|\square_{k}\big|^\beta \lesssim\ \sum_{k=1}^\infty c_k^{\beta/\alpha + 1 - 1/\alpha} \lesssim \sum_{k=1}^\infty |\mathsf{P}_{k}|^{\beta/ \alpha + 1 - 1/\alpha} \leq\ \eps.
\end{align*}
Since $\beta > \alpha \cdot \phi_\alpha + 1 - \alpha$ is arbitrary we have
$ \dim A \leq \alpha \cdot \phi_\alpha + 1 - \alpha$.

\emph{(iii)} Let $\alpha \in [1,\infty)$ and $\eps > 0$ be arbitrary. If
$\beta > 1/\alpha \cdot \phi_\alpha + (1-1/\alpha)\cdot d $, then we have 
$\alpha\beta + (1 - \alpha) \cdot d > \phi_\alpha$.
Hence a cover of $A$ by $\alpha$-parabolic cylinders $(\mathsf{P}_{k})_{k\in\N}$ as in part (i) now fulfills
$|\mathsf{P}_{k}| \asymp c_k^{1/\alpha}$ such that $\sum_{k=1}^\infty \big|\mathsf{P}_{k}\big|^{\alpha \beta + (1 - \alpha) \cdot d}  \leq \eps $. Each $\mathsf{P}_{k}$ can be covered by $\Big\lceil c_k^{1/\alpha-1}\Big\rceil^d$ hypercubes $\square_{k}$ with sidelength $c_{k}$. Then
\begin{align*}
\pazocal{H}^\beta(A) &\leq\ \sum_{k=1}^\infty \Big\lceil c_k^{1/\alpha - 1} \Big\rceil^d \cdot \big|\square_{c_k}\big|^\beta \lesssim\ \sum_{k=1}^\infty \big(c_k^{1/\alpha}\big)^{\alpha \beta + (1 - \alpha) \cdot d} \lesssim \sum_{k=1}^\infty |\mathsf{P}_{c_k^{1/ \alpha}}|^{\alpha\beta + (1-\alpha)\cdot d} \leq\ \eps.
\end{align*}
Since $\beta > 1/\alpha \cdot \phi_\alpha + (1 - 1/\alpha) \cdot d$ is arbitrary we have $
\dim A \leq \frac{1}{\alpha} \cdot \phi_\alpha + \big(1-\frac{1}{\alpha}\big)\cdot d $.

\emph{(iv)} Let $\alpha \in (0,1]$. Further, let $\beta > \dim A$ and $\eps > 0 $ be arbitrary. Then we can cover $A$ with hypercubes
\begin{equation*}
\big(\square_{k}\big)_{k\in\N} = \Bigg(\big[t_{k},\, t_{k} + c_k\big] \times \prod_{j=1}^d \big[x_{j,k},\, x_{j,k}+c_k \big]\Bigg)_{k\in\N} \subseteq \mathcal{P}^1
\end{equation*}
of sidelength $c_k \leq 1$ such that  $\sum_{k=1}^\infty \big|\square_{k}\big|^{\beta} \leq \eps$. Each $\square_{k}$ can be covered by $\Big \lceil c_k^{1-1/\alpha} \Big \rceil^d$ $\alpha$-parabolic cylinders
\begin{equation*}
(\mathsf{P}_{k})_{k \in \N} = \Bigg(\big[t_{k},\, t_{k} + c_k\big] \times \prod_{j=1}^d \Big[x_{j,k},\, x_{j,k}+c_k^{1/\alpha}\Big]\Bigg)_{k \in \N} \subseteq \mathcal{P}^\alpha
\end{equation*}
with $|P_{k}|\asymp c_k$. By choosing $\gamma = \beta + (1/\alpha - 1)\cdot d$ one has
\begin{align*}
\mathcal{P}^\alpha\m\pazocal{H}^\gamma(A) \leq \sum_{k=1}^\infty \Big\lceil c_k^{1 - 1/\alpha} \Big\rceil^d \cdot \big|\mathsf{P}_{k}\big|^\gamma \lesssim  \sum_{k=1}^\infty c_k^{(1-1/\alpha)d + \gamma} = \sum_{k=1}^\infty c_k^{\beta} \lesssim \sum_{k=1}^\infty |\square_{k}|^{\beta} \leq \eps.
\end{align*}
Since $\beta > \dim A$ is arbitrary, one has $\mathcal{P}^\alpha\m\dim A \leq \dim A + \big(\frac{1}{\alpha} - 1\big)\cdot d$.

\emph{(v)} Let $\alpha \in [1,\infty)$. 
Each $\square_{k}$ from part \emph{(iv)} can be covered by $\big\lceil c_k^{1-\alpha} \big \rceil$ $\alpha$-parabolic cylinders
\begin{equation*}
(\mathsf{P}_{k})_{k \in \N} = \Bigg(\Big[t_{k},\, t_{k} + c_k^\alpha\Big] \times \prod_{j=1}^d \big[x_{j,k},\, x_{j,k} + c_k\big]\Bigg) \subseteq \mathcal{P}^\alpha
\end{equation*}
with $|P_{k}| \asymp c_k$. By choosing $\gamma = \beta + \alpha - 1$, with similar calculations as above we get $\mathcal{P}^\alpha\m\pazocal{H}^\gamma(A)\lesssim\eps$.
Since $\beta > \dim A$ is arbitrary, one has $\mathcal{P}^\alpha\m\dim A \leq \dim A + \alpha - 1$ and the theorem is proven.
\end{proof}

\section{Main Results}

So far our considerations regarding the parabolic Hausdorff dimension were purely of geometric nature. Now we will apply it to stochastic processes. We unite the cogitations of the following sections and begin with the Hausdorff dimension of the graph $\pazocal{G}_T(X+f) =\{(t,X_{t}+f(t)):t\in T\}$ of an isotropic stable L\'{e}vy process $X$ plus Borel measurable drift function $f$.

\begin{theorem} \label{thm:Main_Theorem_Hausdorff}
Let $T\subseteq \R_+$ be a Borel set and $\alpha \in (0,2]$. Let $X=(X_t)_{t\geq0}$ be an isotropic $\alpha$-stable L\'{e}vy process in $\R^{d}$.  Further, let $f:T\rightarrow \R^d$ be a Borel measurable function. Let $\ph_\alpha = \mathcal{P}^\alpha\m\dim \pazocal{G}_T(f)$. Then one $\Pb$-almost surely has
\begin{equation*}
\dim \pazocal{G}_T(X+f) =
\begin{cases}
\ph_1, & \alpha \in (0,1],\\
\ph_\alpha \ \wedge \ \left(\frac{1}{\alpha} \cdot \ph_\alpha + \big(1-\frac{1}{\alpha}\big)\cdot d\right), & \alpha \in [1,2].
\end{cases}
\end{equation*}
\end{theorem}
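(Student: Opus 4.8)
The plan is to establish matching upper and lower bounds for $\dim \pazocal{G}_T(X+f)$. For the \emph{upper bound}, the key observation is that the graph $\pazocal{G}_T(X+f)$ is naturally covered by $\alpha$-parabolic cylinders: if we have a cover of $\pazocal{G}_T(f)$ by cylinders $\mathsf{P}_k = [t_k,t_k+c_k]\times\prod_j[x_{j,k},x_{j,k}+c_k^{1/\alpha}]$ witnessing $\mathcal{P}^\alpha\m\dim\pazocal{G}_T(f) = \ph_\alpha$, then on each time interval $[t_k,t_k+c_k]$ the fluctuation of $X$ is, by self-similarity \eqref{eq:X_ct}, of typical order $c_k^{1/\alpha}$ in each coordinate — exactly the spatial scale of $\mathsf{P}_k$. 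This is where the covering Lemma of Pruitt and Taylor (Lemma 6.1 in \cite{PT69}) enters: it controls, almost surely and uniformly, the number of dyadic-type parabolic boxes needed to cover $\{(t,X_t):t\in[0,1]\}$ at each scale. Combining the deterministic cover of the drift with the Pruitt--Taylor cover of $X$, one covers $\pazocal{G}_T(X+f)$ by $\alpha$-parabolic cylinders, and the resulting diameter bookkeeping — together with Theorem \ref{le:General_Upper_Bound_Hausdorff} applied to the parabolic cover, i.e.\ the estimate $\dim A \le \phi_\alpha \wedge (\tfrac1\alpha\phi_\alpha + (1-\tfrac1\alpha)d)$ for $\alpha\in[1,2]$ and $\dim A \le \ph_1$ for $\alpha\in(0,1]$ via $\mathcal{P}^1 = $ ordinary cubes — yields the claimed upper bound in both regimes. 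For $\alpha\in(0,1]$ one must be a little careful: here $H = 1/\alpha \ge 1$, the time scale dominates the parabolic cylinder diameter, and the relevant cover of $\pazocal{G}_T(f)$ is effectively by ordinary cubes, so $\ph_1$ is the natural quantity; this is precisely the case flagged as requiring different arguments in the introduction.

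For the \emph{lower bound} I would use the potential-theoretic (energy) method: to show $\dim\pazocal{G}_T(X+f)\ge\gamma$ it suffices to produce a (random) Borel probability measure $\mu$ on $\pazocal{G}_T(X+f)$ with finite $\gamma$-energy, $\iint \|u-v\|^{-\gamma}\,\mu(du)\,\mu(dv) < \infty$, for all $\gamma$ below the target value. The natural candidate is the image under $t\mapsto(t, X_t+f(t))$ of a measure on $T$ supported near where $\pazocal{G}_T(f)$ has large parabolic content; taking expectations, one reduces to bounding $\E\big[(|s-t|^2 + \|X_s - X_t + f(s)-f(t)\|^2)^{-\gamma/2}\big]$. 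Using the density bound \eqref{eq:p_tail_estimates} and self-similarity \eqref{eq_p_selfsimilar}, $X_s - X_t$ has a density $\asymp |s-t|^{-d/\alpha}p(\cdot/|s-t|^{1/\alpha})$ with power-law tails, and integrating against this density gives an estimate in terms of $|s-t|$ and $|f(s)-f(t)|$ that one then feeds into a Frostman-type measure on $\pazocal{G}_T(f)$ extracted from the definition of $\mathcal{P}^\alpha\m\dim$. The power-law (rather than Gaussian) tails make this integration more delicate — this is the third issue listed in the introduction — and one has to split the integral according to whether $\|X_s-X_t\|$ is comparable to $|s-t|^{1/\alpha}$ or much larger, handling the heavy-tail contribution separately.

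The main obstacle I expect is reconciling the two bounds so that they actually meet, i.e.\ showing that the energy method delivers exactly $\ph_1$ when $\alpha\le 1$ and exactly $\ph_\alpha\wedge(\tfrac1\alpha\ph_\alpha+(1-\tfrac1\alpha)d)$ when $\alpha\ge1$, and in particular that the minimum in the $\alpha\ge1$ case is genuinely attained and not merely an upper bound. Concretely, one must verify that the parabolic Frostman measure on $\pazocal{G}_T(f)$ can be chosen so that its pushforward has finite energy up to the critical exponent; when $\ph_\alpha$ is large the binding constraint becomes the spatial dimension $d$ (the graph "fills out" space), which is why the term $(1-\tfrac1\alpha)\ph_\alpha + (1-\tfrac1\alpha)d = \tfrac1\alpha\ph_\alpha + (1-\tfrac1\alpha)d$ appears, and matching this requires the a priori relation between parabolic and genuine Hausdorff dimension from Theorem \ref{le:General_Upper_Bound_Hausdorff} applied in the reverse direction to $\pazocal{G}_T(f)$ itself. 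The remaining steps — measurability of the random dimension, reduction to $T$ bounded and then to $T = [0,1]$ by countable stability (Proposition \ref{prop:Countable_Stability}), and the routine Fubini/Borel--Cantelli arguments — are standard and I would defer them to the later sections as the excerpt indicates.
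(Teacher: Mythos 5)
Your plan matches the paper's proof in all essentials: the upper bound combines the Pruitt--Taylor covering lemma with the a priori comparison of Theorem \ref{le:General_Upper_Bound_Hausdorff} (Corollary \ref{cor_Upper_bound}), and the lower bound transforms the process into the energy kernel $K^\beta(\tau,\delta)=\E\big[\|(\tau,\mathrm{sign}(\tau)X_{|\tau|}+\delta)\|^{-\beta}\big]$, estimates it by splitting according to whether $\|f(t)-f(s)\|$ exceeds $|t-s|^{1/\alpha}$, and applies a parabolic Frostman lemma (Theorem \ref{le:EnergyEstimates}). The one small correction: the countable-stability reduction in the paper localizes the \emph{values} of $f$, writing $T=\bigcup_{z}\{t\in T:\|f(t)-z\|\le\tfrac12\}$ so that the kernel estimates, which require $\|f(t)-f(s)\|\le1$, become applicable --- it is not a reduction to bounded $T$.
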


\begin{proof}
The Gaussian case where $\alpha=2$ follows from Theorem 1.2 in \cite{PS16} together with Proposition \ref{pro:P2=2P}. The other cases will follow by the combination of Corollary \ref{cor_Upper_bound} with Theorem \ref{le:EnergyEstimates} for drift functions $f$ with $\|f(t)-f(s)\| \leq 1$ for all $s, t \in T$. For the treatment of arbitrary drift functions $f$ we write $T=\bigcup_{z\in(\Z/2)^{d}}T_{z}$ where $T_z := \{t \in T: \|f(t)-z\|\leq\tfrac12\}$.
The claim now follows easily by using the countable stability property from Proposition \ref{prop:Countable_Stability}.
\end{proof}

The formula for the Hausdorff dimension of the range $\pazocal{R}_T(X+f) =\{X_{t}+f(t):t\in T\}$ of an isotropic stable L\'{e}vy process $X$ plus Borel measurable drift function $f$ reads as follows. 

\begin{theorem}\label{thm:Main_theorem_range}
Let $T\subseteq \R_+$ be a Borel set and $\alpha \in (0,2]$. Let $X=(X_t)_{t\geq0}$ be an isotropic $\alpha$-stable L\'{e}vy process in $\R^{d}$ and $f:T\rightarrow \Rd$ be a Borel measurable function. Let $\ph_\alpha = \mathcal{P}^\alpha\m\dim \pazocal{G}_T(f)$. Then one $\Pb$-almost surely has
\begin{equation*}
\dim \pazocal{R}_T(X+f) =
\begin{cases}
\left(\alpha \cdot \ph_\alpha\right) \ \wedge \ d, & \alpha \in (0,1],\\
\ph_\alpha \ \wedge \ d, & \alpha \in [1,2].
\end{cases}
\end{equation*}
\end{theorem}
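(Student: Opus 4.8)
\emph{Strategy, and the upper bound for $\alpha\in[1,2]$.} The plan is to treat the two bounds separately, exploiting that $\pazocal{R}_T(X+f)$ is the image of $\pazocal{G}_T(X+f)$ under the $1$-Lipschitz coordinate projection $\pi(t,x)=x$ and that $\pazocal{R}_T(X+f)\subseteq\Rd$. This already settles the upper bound for $\alpha\in[1,2]$: one has $\dim\pazocal{R}_T(X+f)\le\dim\pazocal{G}_T(X+f)\wedge d$ $\Pb$-almost surely, and since $\tfrac1\alpha\ph_\alpha+(1-\tfrac1\alpha)d$ is a convex combination of $\ph_\alpha$ and $d$, hence lies between them, Theorem~\ref{thm:Main_Theorem_Hausdorff} gives
\[\dim\pazocal{G}_T(X+f)\wedge d=\ph_\alpha\wedge\Big(\tfrac1\alpha\ph_\alpha+\big(1-\tfrac1\alpha\big)d\Big)\wedge d=\ph_\alpha\wedge d,\]
which is the asserted upper bound. (For $\alpha=2$ one could instead quote the Brownian result of \cite{PS16} together with Proposition~\ref{pro:P2=2P}.)

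\emph{The upper bound for $\alpha\in(0,1)$.} Here $\dim\pazocal{G}_T(X+f)=\ph_1$ is in general too large for the projection bound, so I would argue directly by covering. Fix $\gamma>\alpha\ph_\alpha$, pick $\beta\in(\ph_\alpha,\gamma/\alpha)$, and for $\eps>0$ take a cover $\pazocal{G}_T(f)\subseteq\bigcup_k\mathsf{P}_k$ by $\alpha$-parabolic cylinders $\mathsf{P}_k=[t_k,t_k+c_k]\times\prod_{j=1}^d[x_{j,k},x_{j,k}+c_k^{1/\alpha}]$ with $\sum_k|\mathsf{P}_k|^\beta\le\eps$, which is possible because $\beta>\ph_\alpha=\mathcal{P}^\alpha\m\dim\pazocal{G}_T(f)$; note $|\mathsf{P}_k|\asymp c_k$ as $\alpha<1$. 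Put $T_k:=\{t\in T\cap[t_k,t_k+c_k]:f(t)\in\prod_j[x_{j,k},x_{j,k}+c_k^{1/\alpha}]\}$, so $T=\bigcup_kT_k$. Since $f$ oscillates by at most $\sqrt d\,c_k^{1/\alpha}$ on $T_k$, the set $\pazocal{R}_{T_k}(X+f)$ lies in a random translate of the $(\sqrt d\,c_k^{1/\alpha})$-neighbourhood of $\{X_u-X_{t_k}:u\in[t_k,t_k+c_k]\}$, and by stationarity of increments and self-similarity \eqref{eq:X_ct} the latter set is distributed as $c_k^{1/\alpha}\,\pazocal{R}_{[0,1]}(X)$. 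By the Pruitt–Taylor covering lemma (Lemma~6.1 of \cite{PT69}, applicable since $\alpha<1\le d$) the number $N_k$ of balls of radius $c_k^{1/\alpha}$ needed to cover $\pazocal{R}_{T_k}(X+f)$ then satisfies $\E[N_k]\lesssim c_k\,(c_k^{1/\alpha})^{-\alpha}=1$, uniformly in $k$. Hence, using $\gamma/\alpha\ge\beta$ and $c_k\le1$,
\[\E\big[\pazocal{H}^\gamma(\pazocal{R}_T(X+f))\big]\ \lesssim\ \sum_k\E[N_k]\,\big(c_k^{1/\alpha}\big)^\gamma\ \lesssim\ \sum_k c_k^{\gamma/\alpha}\ \le\ \sum_k c_k^{\beta}\ \asymp\ \sum_k|\mathsf{P}_k|^\beta\ \le\ \eps.\]
As $\eps>0$ is arbitrary, $\pazocal{H}^\gamma(\pazocal{R}_T(X+f))=0$ $\Pb$-a.s., and letting $\gamma\downarrow\alpha\ph_\alpha$ along a countable sequence and using $\dim\pazocal{R}_T(X+f)\le d$ yields $\dim\pazocal{R}_T(X+f)\le(\alpha\ph_\alpha)\wedge d$ $\Pb$-a.s.

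\emph{The lower bound for all $\alpha\in(0,2]$.} Assume $\ph_\alpha>0$ (otherwise there is nothing to prove) and fix $s$ strictly below $(\alpha\ph_\alpha)\wedge d$ if $\alpha\le1$, resp.\ strictly below $\ph_\alpha\wedge d$ if $\alpha\ge1$; then $s<d$ and $s\cdot\max(1,1/\alpha)<\ph_\alpha$. By a Frostman-type lemma for the $\alpha$-parabolic Hausdorff measure (the parabolic analogue of the classical mass-distribution principle, applied to the Borel set $\pazocal{G}_T(f)$) I would produce a probability measure $\mu$ on $T$ with
\[\iint\big(|a-b|\vee\|f(a)-f(b)\|^\alpha\big)^{-s/\alpha}\,d\mu(a)\,d\mu(b)<\infty,\]
which is possible precisely because $s\cdot\max(1,1/\alpha)<\ph_\alpha$ (the exponent $s/\alpha$ recording that an $\alpha$-parabolic cylinder with parameter $c$ has Euclidean diameter $\asymp c^{\min(1,1/\alpha)}$). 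Pushing $\mu$ forward by $t\mapsto X_t+f(t)$ to a measure $\lambda$ on $\pazocal{R}_T(X+f)$ and using Fubini and stationarity of increments,
\[\E\!\left[\iint\|x-y\|^{-s}\,d\lambda(x)\,d\lambda(y)\right]=\iint\E\big[\big\|X'_{|a-b|}+f(a)-f(b)\big\|^{-s}\big]\,d\mu(a)\,d\mu(b),\qquad X'\eqd X.\]
By \eqref{eq_p_selfsimilar} the inner expectation equals $\int_\Rd p(h,x)\,\|x+v\|^{-s}\,dx$ with $h=|a-b|$ and $v=f(a)-f(b)$, and splitting this integral at $\|x\|\asymp h^{1/\alpha}$ and at $\|x+v\|\asymp\|v\|$ and inserting $p(1,\cdot)\lesssim1$ together with the tail bound \eqref{eq:p_tail_estimates} should give, for $s<d$,
\[\int_\Rd p(h,x)\,\|x+v\|^{-s}\,dx\ \lesssim\ \big(h^{1/\alpha}\vee\|v\|\big)^{-s}=\big(h\vee\|v\|^\alpha\big)^{-s/\alpha}.\]
Then the expected $s$-energy of $\lambda$ is at most a constant times the finite integral above, so $\lambda$ has finite $s$-energy $\Pb$-a.s., and the energy method gives $\dim\pazocal{R}_T(X+f)\ge s$ $\Pb$-a.s. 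Letting $s$ increase to $(\alpha\ph_\alpha)\wedge d$, resp.\ to $\ph_\alpha\wedge d$, and combining with the upper bounds proves the theorem. No reduction is needed for unbounded $f$, but if preferred one may split $T=\bigcup_{z\in(\Z/2)^d}\{t\in T:\|f(t)-z\|\le\tfrac12\}$ and invoke Proposition~\ref{prop:Countable_Stability} exactly as in the proof of Theorem~\ref{thm:Main_Theorem_Hausdorff}.

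\emph{Main obstacle.} The crux is the energy-kernel estimate $\int_\Rd p(h,x)\|x+v\|^{-s}\,dx\lesssim(h^{1/\alpha}\vee\|v\|)^{-s}$: because $p(1,\cdot)$ decays only polynomially by \eqref{eq:p_tail_estimates} — rather than exponentially, as for the Gaussian kernels in \cite{PS16} — the split of this integral must be carried out with care, and the resulting parabolic-distance decay then has to be matched against the two diameter normalisations of $\mathcal{P}^\alpha$-cylinders (governed by the space side when $\alpha\le1$ and by the time side when $\alpha\ge1$) in order to land precisely on $(\alpha\ph_\alpha)\wedge d$, resp.\ $\ph_\alpha\wedge d$. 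A secondary technical point is establishing the uniform bound $\E[N_k]\lesssim1$ in the Pruitt–Taylor step for the heavy-tailed stable range.
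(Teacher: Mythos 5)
Your proposal is correct and follows essentially the same route as the paper: the upper bound for $\alpha\in[1,2]$ via projection and the graph bound, the upper bound for $\alpha\in(0,1]$ via a parabolic cover of $\pazocal{G}_T(f)$ combined with the Pruitt--Taylor covering lemma, and the lower bound via the energy method with the process absorbed into the kernel and a parabolic Frostman measure (cf.\ Theorems \ref{thm:Range_dim_X+f_leq_dim_f} and \ref{thm:Main_Theorem_Hausdorff_Range}). The kernel estimate you flag as the crux and only sketch is precisely the paper's Lemma \ref{le:kernel_estimate_2}, and your claimed bound $(h^{1/\alpha}\vee\|v\|)^{-s}$ for $s<d$ coincides with it, so the gap is one of detail rather than of substance.
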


\begin{proof}
The Gaussian case where $\alpha=2$ follows from Theorem 1.2 in \cite{PS16}. The rest will follow by Theorem \ref{thm:Range_dim_X+f_leq_dim_f} and Theorem \ref{thm:Main_Theorem_Hausdorff_Range} analogously to the proof of Theorem \ref{thm:Main_Theorem_Hausdorff}.
\end{proof}

Our main theorems imply an improvement of Theorem \ref{le:General_Upper_Bound_Hausdorff} for graph sets.

\begin{cor}\label{cor:improvement_upper_bound}
Let $\ph_\alpha = \mathcal{P}^\alpha\m\dim \pazocal{G}_T(f)$. In case of $\alpha \in (0,1]$ one has
\begin{equation*}
\ph_1 \geq \left(\alpha \cdot \ph_\alpha\right) \ \vee \ \left(\ph_\alpha + \left(1-\tfrac{1}{\alpha}\right)\cdot d\right).
\end{equation*}
\end{cor}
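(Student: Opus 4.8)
The right-hand side of the assertion is a maximum, so the plan is to establish the two lower bounds for $\ph_1$ separately. The bound $\ph_1\geq\ph_\alpha+(1-\tfrac1\alpha)d$ requires no new work: for $\alpha\in(0,1]$ it is precisely the lower estimate of Theorem \ref{le:General_Upper_Bound_Hausdorff} applied to $A=\pazocal{G}_T(f)$. For later use I would rewrite it as $\ph_\alpha\leq\ph_1+(\tfrac1\alpha-1)d$ and, multiplying by $\alpha\in(0,1]$, as $\alpha\ph_\alpha\leq\alpha\ph_1+(1-\alpha)d$.

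The content of the corollary is therefore the second bound $\ph_1\geq\alpha\ph_\alpha$, and here the probabilistic main theorems enter. The key observation is that the range of $X+f$ is a Lipschitz image of its graph: the coordinate projection $\pi\colon\R^{1+d}\to\R^d$, $(t,x)\mapsto x$, is $1$-Lipschitz and satisfies $\pi\big(\pazocal{G}_T(X+f)\big)=\pazocal{R}_T(X+f)$, so $\dim\pazocal{R}_T(X+f)\leq\dim\pazocal{G}_T(X+f)$ for every sample path. Inserting the values furnished by Theorems \ref{thm:Main_Theorem_Hausdorff} and \ref{thm:Main_theorem_range} for $\alpha\in(0,1]$, namely $\dim\pazocal{G}_T(X+f)=\ph_1$ and $\dim\pazocal{R}_T(X+f)=(\alpha\ph_\alpha)\wedge d$ $\Pb$-almost surely, gives $(\alpha\ph_\alpha)\wedge d\leq\ph_1$; since this is an inequality between the deterministic numbers $\ph_1,\ph_\alpha,d$, it holds unconditionally.

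It then remains to remove the truncation at $d$, and this small case distinction is the only real obstacle. If $\alpha\ph_\alpha\leq d$, the previous step already reads $\alpha\ph_\alpha\leq\ph_1$. If $\alpha\ph_\alpha>d$, then $(\alpha\ph_\alpha)\wedge d=d$, hence $\ph_1\geq d$, and inserting $d\leq\ph_1$ into the reformulated bound from the first paragraph yields $\alpha\ph_\alpha\leq\alpha\ph_1+(1-\alpha)d\leq\alpha\ph_1+(1-\alpha)\ph_1=\ph_1$. In either case $\ph_1\geq\alpha\ph_\alpha$, which together with the first bound proves the corollary. The case split is unavoidable for a structural reason: the range lies in $\R^d$ so its dimension is capped at $d$, whereas the graph lies in $\R^{1+d}$ and sees the full value $\alpha\ph_\alpha$; once $\alpha\ph_\alpha>d$ the projection argument becomes too lossy and must be replaced by the purely geometric a priori estimate of Theorem \ref{le:General_Upper_Bound_Hausdorff}.
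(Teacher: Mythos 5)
Your proof is correct and follows essentially the same route as the paper: both arguments combine the a priori lower bound of Theorem \ref{le:General_Upper_Bound_Hausdorff} with the chain $\ph_1 \geq \dim\pazocal{G}_T(X+f)\geq\dim\pazocal{R}_T(X+f)=(\alpha\cdot\ph_\alpha)\wedge d$ furnished by Theorems \ref{thm:Main_Theorem_Hausdorff} and \ref{thm:Main_theorem_range}, and then split on whether $\alpha\cdot\ph_\alpha\leq d$. Your explicit closing of the case $\alpha\cdot\ph_\alpha>d$ via $\alpha\ph_\alpha\leq\alpha\ph_1+(1-\alpha)d\leq\ph_1$ spells out a step the paper's proof leaves implicit, but it is the same argument in substance.
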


\begin{proof}
For $\alpha \in (0,1]$ the combination of Theorem \ref{thm:Main_Theorem_Hausdorff} and Theorem \ref{thm:Main_theorem_range} directly yields
\begin{equation*}
\ph_1  = \dim \pazocal{G}_T(f) \geq \dim \pazocal{G}_T(X + f) \geq \dim \pazocal{R}_T(X+f) = \left(\alpha \cdot \ph_\alpha\right) \ \wedge \ d.
\end{equation*}
Furthermore, we have 
\begin{equation*}
\alpha \cdot \ph_\alpha \geq \ph_\alpha + \left(1-\tfrac{1}{\alpha}\right)\cdot d \quad \text{ if and only if }\quad \alpha \cdot \ph_\alpha \leq d
\end{equation*}
and
\begin{equation*}
d \geq \ph_\alpha + \left(1-\tfrac{1}{\alpha}\right)\cdot d \quad\text{ if and only if } \quad \alpha \cdot \ph_\alpha \leq d
\end{equation*}
which proves the claim.
\end{proof}

\section{Graph: Upper Bound via Geometric Measure Theory}

We calculate an upper bound for the Hausdorff dimension of the graph of an isotropic stable L\'{e}vy process $X$ plus drift function by means of an efficient covering.

\begin{theorem}\label{thm:dim_X+f_leq_dim_f}
Let $T\subseteq \R_+$ be any set and $\alpha \in (0,2]$. Let $X=(X_t)_{t\geq0}$ be an isotropic $\alpha$-stable L\'{e}vy process in $\R^{d}$ and $f:T\rightarrow \Rd$ be any function. Furthermore let $\ph_\alpha = \mathcal{P}^\alpha\m\dim \pazocal{G}_T(f)$ be the $\alpha$-parabolic Hausdorff dimension of the graph of $f$ over $T$. 
Then for $\alpha \in (0,1]$ one $\Pb$-almost surely has
\begin{align*}
\dim \pazocal{G}_T(X+f) \leq \dim \pazocal{G}_T(f) = \ph_1,
\end{align*}
and for $\alpha \in [1,2]$ one $\Pb$-almost surely has
\begin{align*}
\mathcal{P}^\alpha\m\dim \pazocal{G}_T(X+f) \leq \mathcal{P}^\alpha\m\dim \pazocal{G}_T(f) = \ph_\alpha.
\end{align*}
\end{theorem}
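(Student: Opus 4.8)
The plan is to produce an explicit cover of $\pazocal{G}_T(X+f)$ that is almost as efficient as an optimal cover of $\pazocal{G}_T(f)$, by superimposing a deterministic cover of the graph of $f$ with a random cover of the range of $X$ provided by the Pruitt--Taylor covering lemma (Lemma 6.1 in \cite{PT69}). Fix $\beta > \ph_\alpha$. By definition of the $\alpha$-parabolic Hausdorff dimension we can choose, for every $\delta>0$, a cover of $\pazocal{G}_T(f)$ by $\alpha$-parabolic cylinders $\mathsf{Q}_i = [t_i,t_i+c_i]\times\prod_{j=1}^d[y_{j,i},y_{j,i}+c_i^{1/\alpha}]\in\mathcal{P}^\alpha$ with $|\mathsf{Q}_i|\leq\delta$ and $\sum_i |\mathsf{Q}_i|^\beta \leq 1$. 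For each $i$ let $S_i := T\cap[t_i,t_i+c_i]$; on this time interval $f$ stays in a box of side $c_i^{1/\alpha}$ (for $\alpha\le 1$ the diameter is governed by $c_i$, for $\alpha\ge 1$ by $c_i^{1/\alpha}$, which is exactly the case split that appears in the statement). The increment of $X$ over $[t_i,t_i+c_i]$ is, by self-similarity \eqref{eq:X_ct}, distributed as $c_i^{1/\alpha}X_1$. Applying the Pruitt--Taylor lemma to the piece of the $\alpha$-stable process run for time $c_i$, the portion of its range can be covered by $N_i$ balls of radius $c_i^{1/\alpha}$ where $\E[N_i]$ is bounded by a constant independent of $i$ (this is the content of their covering lemma: a stable process run for unit time has range coverable by $O(1)$ balls of unit radius in expectation, and one rescales). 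Thus the graph piece $\{(t,X_t+f(t)):t\in S_i\}$ is covered by roughly $N_i$ sets of the form (time-interval of length $c_i$) $\times$ (spatial box of side $\asymp c_i^{1/\alpha}$), i.e.\ by $N_i$ many $\alpha$-parabolic cylinders each of diameter $\asymp|\mathsf{Q}_i|$.

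\medskip

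Carrying out the bookkeeping, one gets
\begin{equation*}
\E\Big[\mathcal{P}^\alpha\m\pazocal{H}^{\beta}\big(\pazocal{G}_T(X+f)\big)\Big] \lesssim \sum_i \E[N_i]\cdot|\mathsf{Q}_i|^{\beta} \lesssim \sum_i |\mathsf{Q}_i|^{\beta}\leq 1,
\end{equation*}
where the limit $\delta\downarrow 0$ is handled by noting the cylinder diameters in the new cover are comparable to those of the $\mathsf{Q}_i$. Hence $\mathcal{P}^\alpha\m\pazocal{H}^{\beta}(\pazocal{G}_T(X+f))<\infty$ almost surely, so $\mathcal{P}^\alpha\m\dim\pazocal{G}_T(X+f)\le\beta$; letting $\beta\downarrow\ph_\alpha$ gives the claimed inequality $\mathcal{P}^\alpha\m\dim\pazocal{G}_T(X+f)\le\ph_\alpha=\mathcal{P}^\alpha\m\dim\pazocal{G}_T(f)$ for $\alpha\in[1,2]$. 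For $\alpha\in(0,1]$ the extra input is that in this regime the $\alpha$-parabolic cylinders produced above have their \emph{genuine} diameter governed by the time-length $c_i$ (since $c_i^{1/\alpha}\le c_i$), so the same cover read as a cover by Euclidean sets yields $\dim\pazocal{G}_T(X+f)\le\beta$ directly with the genuine Hausdorff measure, and the identity $\dim\pazocal{G}_T(f)=\ph_1$ is just the $\alpha=1$ case of the definition; one should also invoke Theorem \ref{le:General_Upper_Bound_Hausdorff} to see that the two formulations are consistent.

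\medskip

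The main obstacle I expect is making the application of the Pruitt--Taylor covering lemma uniform and measurable across the (possibly infinitely many, possibly overlapping) time intervals $S_i$ simultaneously, while keeping the expected total count $\sum_i\E[N_i]|\mathsf{Q}_i|^\beta$ under control --- in particular one wants the covering balls to be adapted to the same dyadic-type grid so that, after rounding, each contributes a genuine $\alpha$-parabolic cylinder of controlled diameter, and one must check that the expectation bound from \cite{PT69} (which is stated for a single fixed time horizon) transfers to each rescaled piece with a constant that does not accumulate. A secondary technical point is the passage $\delta\downarrow 0$: since the Pruitt--Taylor cover of a small-time piece can occasionally be bad, one needs either a truncation argument or the observation that the relevant expectation is finite and then a Borel--Cantelli / Fatou step to conclude the almost-sure statement for the outer measure rather than merely for its expectation.
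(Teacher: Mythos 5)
Your overall strategy --- superimposing a deterministic cover of $\pazocal{G}_T(f)$ with a Pruitt--Taylor random cover of the path of $X$ --- is exactly the paper's, and for $\alpha\in[1,2]$ your argument is essentially the paper's proof. (The paper actually settles for the lossier sojourn-time bound $\E[M_k]\lesssim c_k^{-\delta'/\alpha}$ from Lemma 3.4 in \cite{MX05} and absorbs the loss into $\beta>\ph_\alpha$, so your sharper claim $\E[N_i]=O(1)$, while correct for isotropic stable processes, is not needed. Your worries about measurability and the $\delta\downarrow0$ limit are handled simply by applying Tonelli to the expected sum and letting the auxiliary $\eps$ tend to $0$; no Borel--Cantelli step is required.)

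There is, however, a genuine gap in your treatment of $\alpha\in(0,1]$. You fix $\beta>\ph_\alpha$ and start from an $\alpha$-parabolic cover of $\pazocal{G}_T(f)$; reading the resulting cover of $\pazocal{G}_T(X+f)$ as a Euclidean cover then only yields $\dim\pazocal{G}_T(X+f)\le\ph_\alpha$. But the theorem asserts the bound $\ph_1=\dim\pazocal{G}_T(f)$, and by Theorem \ref{le:General_Upper_Bound_Hausdorff} one has $\ph_1\le\ph_\alpha$ for $\alpha\in(0,1]$, with strict inequality in general (by \eqref{eq:estimates_2}, $\ph_\alpha\ge 1+\frac{\ph_1-1}{\alpha}>\ph_1$ whenever $\ph_1>1$ and $\alpha<1$), so your bound is strictly weaker and would not match the lower bound of Theorem \ref{le:EnergyEstimates}. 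The missing idea is that for $\alpha\le1$ one must start from a cover of $\pazocal{G}_T(f)$ by genuine hypercubes of side $c_k$ realizing $\sum_k c_k^{\ph_1+\delta}\le\eps$, and only then refine in space via Pruitt--Taylor with cubes of side $c_k^{1/\alpha}\le c_k$; since the superimposed sets still have diameter $\asymp c_k$, one gets $\E\big[\pazocal{H}^{\ph_1+\eps'}(\pazocal{G}_T(X+f))\big]\lesssim\sum_k \E[M_k]\, c_k^{\ph_1+\eps'}\lesssim\sum_k c_k^{\ph_1+\delta}\le\eps$, which is the bound actually claimed.
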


\begin{proof} 
\textit{(i)} 
Let $\alpha \in (0,1]$, $\beta = \ph_1$ and let $\delta,\, \eps>0$ be arbitrary. Then $\pazocal{G}_T(f)$ can be covered by hypercubes
\begin{equation*}
(\square_{k})_{k \in \N} = \left(\big[t_k,\, t_k + c_k \big] \times \prod_{i=1}^d \Big[x_{i,k},\, x_{i,k}+c_k \Big] \right)_{k\in\N} \subseteq \mathcal{P}^1
\end{equation*}
such that $\sum_{k=1}^\infty \big|\square_{k}\big|^{\beta + \delta} \lesssim \sum_{k=1}^\infty c_k^{\, \beta + \delta} \leq \eps$. Let $M_k(\omega)$ be the random number of a fixed $2^d$-nested collection of hypercubes (see Lemma 6.1 in \cite{PT69} for the definition) with sidelength $c_k^{1/\alpha}$ that the path $t\mapsto X_t(\omega)$ hits at some time $t\in[t_k,\, t_k + c_k]$. Let $\bigcup_{k\in\N}\ \mathsf{P}_{k}(\omega) \supseteq \pazocal{G}_T(X(\omega))$ with
\begin{equation*}
\big(\mathsf{P}_{k}(\omega)\big)_{k \in \N} = \Bigg( \big[t_k,\, t_k + c_k\big] \times \bigcup_{j=1}^{M_k(\omega)} \prod_{i=1}^d \Big[\xi_{i,j,k}(\omega),\,\xi_{i,j,k}(\omega)+c_k^{1/\alpha}\Big]\Bigg)_{k \in \N}
\end{equation*}
being a corresponding random parabolic cover of the graph of this path. Then for all $t\in[t_k,\, t_k+c_k]$ there exists $j\in\{1,\dots,M_k(\omega)\}$ such that for the $i$-th component of $X+f$ we have
\begin{equation*}
\xi_{i,j,k}(\omega)+x_{i,k} \leq X_t^{(i)}(\omega)+f_{i}(t) \leq \xi_{i,j,k}(\omega)+x_{i,k} + c_k^{1/\alpha} + c_k \leq \xi_{i,j,k}(\omega)+x_{i,k} + 2 c_k.
\end{equation*}
Hence we obtain a random cover $\bigcup_{k\in\N}\ \widetilde{\square}_{k}(\omega) \supseteq \pazocal{G}_T(X(\omega)+f)$ where
\begin{align*}
\widetilde{\square}_{k}(\omega) = \big[t_k,\, t_k + c_k\big] \times \bigcup_{j=1}^{M_k(\omega)} \prod_{i=1}^d & \left( \Big[\xi_{i,j,k}(\omega)+x_{i,k},\, \xi_{i,j,k}(\omega)+x_{i,k}+c_k\Big]\right.\\
\cup\ & \, \left.\Big[\xi_{i,j,k}(\omega)+x_{i,k}+c_k,\, \xi_{i,j,k}(\omega)+x_{i,k}+2c_k\Big]\right)
\end{align*}
This is a union of $M_k(\omega)\cdot 2^d$ sets with diameter $\sqrt{d+1} \cdot c_k$. An application of Pruitt and Taylor's covering Lemma 6.1 in \cite{PT69} and Lemma 3.4 in \cite{MX05} shows that for all $\delta' > 0$ one has
\begin{equation*}
\E[M_k] \lesssim \frac{c_k}{\E\Big[T\big(c_k^{1/\alpha}/3,c_k\big)\Big]} \lesssim c_k^{-\delta'/\alpha},
\end{equation*}
where $T\big(c_k^{1/\alpha}/3,c_k\big)$ is the sojourn time of the process $(X_t)_{t\in[0,c_k]}$ in a ball of radius $c_k^{1/\alpha}/3$ centred at the origin. 
Hence we get for $\eps' = \delta + \delta'/\alpha > 0$
\begin{equation*}
\E\Big[\pazocal{H}^{\beta+\eps'}(\pazocal{G}_T(X+f))\Big]
\leq \E\Bigg[\sum_{k=1}^\infty |\widetilde{\square}_{c_k}|^{\beta+\eps'} \Bigg]
\lesssim \sum_{k=1}^\infty \E[M_k]\cdot c_k^{\beta + \eps'} \lesssim  \sum_{k=1}^\infty  c_k^{\beta + \delta} \leq \eps.
\end{equation*}
Since $\eps,\eps' > 0$ are arbitrary, we get for all $\alpha \in (0,1]$ and $\beta'>\beta$
\begin{equation*}
\E\Big[\pazocal{H}^{\beta'}(\pazocal{G}_T(X+f))\Big]=0
\end{equation*}
which implies
$\pazocal{H}^{\beta'}(\pazocal{G}_T(X+f)) =0\quad  \Pb\text{-almost surely}$.
Since $\beta'>\beta$ is arbitrary we finally get $
\dim \pazocal{G}_T(X+f) \leq \beta = \ph_1\,\,\Pb\text{-almost surely}$.

\textit{(ii)} Let $\alpha \in [1,2]$, $\beta = \mathcal{P}^\alpha\m\dim \pazocal{G}_T(f)$ and let $\eps,\delta>0$ be arbitrary. Then $\pazocal{G}_T(f)$ can be covered by $\alpha$-parabolic cylinders
\begin{equation*}
(\mathsf{P}_{k})_{k \in \N} = \left(\big[t_k,\, t_k + c_k \big] \times \prod_{i=1}^d \Big[x_{i,k},\, x_{i,k}+c_k^{1/\alpha} \Big] \right)_{k\in\N} \subseteq \mathcal{P}^\alpha 
\end{equation*}
such that $\sum_{k=1}^\infty \big|\mathsf{P}_{k}\big|^{\beta + \delta} \lesssim \sum_{k=1}^\infty  c_k^{(\beta + \delta)/\alpha} \leq \eps$. Let $M_k(\omega)$ be the random number of a fixed $2^d$-nested collection of hypercubes with sidelength $c_k^{1/\alpha}$ that the path $t\mapsto X_t(\omega)$ hits at some time $t\in[t_k,\, t_k + c_k]$. As in part \emph{(i)}, we obtain a random parabolic cover $\bigcup_{k\in\N}\ \widetilde{\mathsf{P}}_{k}(\omega) \supseteq \pazocal{G}_T(X(\omega)+f)$ where
\begin{align*}
\widetilde{\mathsf{P}}_{k}(\omega) = \big[t_k,\, t_k + c_k\big] \times \bigcup_{j=1}^{M_k(\omega)} \prod_{i=1}^d & \left( \Big[\xi_{i,j,k}(\omega)+x_{i,k},\, \xi_{i,j,k}(\omega)+x_{i,k}+c_k^{1/\alpha}\Big]\right.\\
\cup\ & \, \left.\Big[\xi_{i,j,k}(\omega)+x_{i,k}+c_k^{1/\alpha},\, \xi_{i,j,k}(\omega)+x_{i,k}+2c_k^{1/\alpha}\Big]\right).
\end{align*}
This is a union of $M_k(\omega)\cdot 2^d$ sets with diameter ${|\widetilde{\mathsf{P}}_{k}(\omega)|\lesssim} c_k^{1/\alpha}$. 
As in part \emph{(i)} we get $\E[M_k]\lesssim c_k^{-\delta'/\alpha}$.
Hence we get for $\eps' = \delta + \delta' > 0$ with the similar calculations as above
\begin{equation*}
\E\Big[\mathcal{P}^\alpha\m\pazocal{H}^{\beta+\eps'}(\pazocal{G}_T(X+f))\Big]
\lesssim \sum_{k=1}^\infty \E[M_k]\cdot c_k^{(\beta+\eps')/\alpha}
 \lesssim \sum_{k=1}^\infty c_k^{(\beta + \delta) / \alpha}
\leq \eps.
\end{equation*}
Since $\eps, \eps' > 0$ are arbitrary, as in part \textit{(i)} we finally get
\begin{equation*}
\mathcal{P}^\alpha\m\dim \pazocal{G}_T(X+f) \leq \beta = \mathcal{P}^\alpha\m\dim \pazocal{G}_T(f)\quad\Pb\text{-almost surely},
\end{equation*}
as claimed.
\end{proof}

\begin{cor}\label{cor_Upper_bound}
Let $T\subseteq \R_+$ be any set and $\alpha \in (0,2]$.  Let $X=(X_t)_{t\geq0}$ be an isotropic $\alpha$-stable L\'{e}vy process in $\R^{d}$ and $f:T\rightarrow \Rd$ be any function. Furthermore let $\ph_\alpha = \mathcal{P}^\alpha\m\dim \pazocal{G}_T(f)$. Then one $\Pb$-almost surely has
\begin{equation*}
\dim \pazocal{G}_T(X+f) \leq
\begin{cases}
\ph_1, & \alpha \in (0,1],\\
\ph_\alpha \ \wedge \ \left(\frac{1}{\alpha} \cdot \ph_\alpha + \big(1-\frac{1}{\alpha}\big)\cdot d\right), & \alpha \in [1,2].
\end{cases}
\end{equation*}
\end{cor}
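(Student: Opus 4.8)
The plan is to obtain the corollary as a formal consequence of the two ingredients already in place: the probabilistic upper bounds of Theorem~\ref{thm:dim_X+f_leq_dim_f} and the purely geometric a~priori estimate of Theorem~\ref{le:General_Upper_Bound_Hausdorff}. For $\alpha\in(0,1]$ nothing further is needed, since the first case of Theorem~\ref{thm:dim_X+f_leq_dim_f} already states $\dim\pazocal{G}_T(X+f)\leq\ph_1$ $\Pb$-almost surely, which is exactly the claim in that range.

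For $\alpha\in[1,2]$ I would apply the $\alpha\geq1$ branch of Theorem~\ref{le:General_Upper_Bound_Hausdorff} to the random set $A=\pazocal{G}_T(X+f)$ (pathwise, for each $\omega$). Abbreviating $\phi_\alpha:=\mathcal{P}^\alpha\m\dim\pazocal{G}_T(X+f)$, this gives $\Pb$-almost surely
\[
\dim\pazocal{G}_T(X+f)\ \leq\ \phi_\alpha\ \wedge\ \Big(\tfrac1\alpha\,\phi_\alpha+\big(1-\tfrac1\alpha\big)d\Big).
\]
Next I would observe that for $\alpha\geq1$ the map $x\mapsto x\wedge\big(\tfrac1\alpha x+(1-\tfrac1\alpha)d\big)$ is non-decreasing on $[0,\infty)$, being the minimum of the two non-decreasing functions $x\mapsto x$ and $x\mapsto\tfrac1\alpha x+(1-\tfrac1\alpha)d$ (here $1-\tfrac1\alpha\geq0$ and the slope $\tfrac1\alpha$ is positive). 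Since the second case of Theorem~\ref{thm:dim_X+f_leq_dim_f} yields $\phi_\alpha\leq\ph_\alpha$ $\Pb$-almost surely, this monotonicity upgrades the displayed bound to
\[
\dim\pazocal{G}_T(X+f)\ \leq\ \ph_\alpha\ \wedge\ \Big(\tfrac1\alpha\,\ph_\alpha+\big(1-\tfrac1\alpha\big)d\Big)\qquad\Pb\text{-almost surely},
\]
which is the assertion. As both cases only combine statements holding on sets of full probability, no extra measurability or exceptional-set bookkeeping arises.

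I do not anticipate a genuine obstacle: the corollary is a short deduction, and the only step that needs an explicit (one-line) justification is the monotonicity of $x\mapsto x\wedge(\tfrac1\alpha x+(1-\tfrac1\alpha)d)$, which is precisely what allows replacing $\mathcal{P}^\alpha\m\dim\pazocal{G}_T(X+f)$ by the larger $\ph_\alpha=\mathcal{P}^\alpha\m\dim\pazocal{G}_T(f)$ inside the bound. This trick is unavailable for $\alpha<1$ (where Theorem~\ref{thm:dim_X+f_leq_dim_f} in any case controls only the genuine Hausdorff dimension, not the $\alpha$-parabolic one), which is exactly why the range $\alpha\in(0,1]$ must be treated by the direct comparison $\dim\pazocal{G}_T(X+f)\leq\dim\pazocal{G}_T(f)=\ph_1$ rather than through Theorem~\ref{le:General_Upper_Bound_Hausdorff}.
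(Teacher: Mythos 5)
Your argument is correct and is essentially the paper's own proof: the corollary is obtained by combining Theorem~\ref{thm:dim_X+f_leq_dim_f} with the $\alpha\geq1$ branch of Theorem~\ref{le:General_Upper_Bound_Hausdorff}, and your explicit monotonicity observation is exactly the (implicit) step that lets one pass from $\mathcal{P}^\alpha\m\dim\pazocal{G}_T(X+f)$ to the larger $\ph_\alpha$ inside the bound. The only cosmetic difference is that the paper routes the case $\alpha=2$ through Corollary~2.3 of \cite{PS16} and Proposition~\ref{pro:P2=2P}, whereas you cover it with the same deduction, which is legitimate since both cited theorems are stated for $\alpha\in(0,2]$.
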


\begin{proof}
The Gaussian case $\alpha=2$ follows from Corollary 2.3 in \cite{PS16} and Proposition \ref{pro:P2=2P}. The rest follows directly from Theorem \ref{le:General_Upper_Bound_Hausdorff} and Theorem \ref{thm:dim_X+f_leq_dim_f}.
\end{proof}

\section{Graph: Lower Bound via Potential Theory}

Next we want to calculate a lower bound for the Hausdorff dimension of isotropic stable L\'{e}vy processes with drift. This will be accomplished by the energy method, see Section 4.3 in \cite{MP10}. This method makes use of the Lebesgue integral. Hence for the first time we have to impose restrictions on the domain $T\subseteq \R_+$ and the drift function $f:T \rightarrow \Rd$ with regard to their measurability. For a Borel-measurable function it is well known that the graph is always a Borel set, whereas the range is not necessarily a Borel set, but belongs to the Suslin sets. Suslin sets (also called analytic sets)  superceed the Borel sets and can be represented as the image of a Borel set under a continuous mapping. For details on Suslin sets we refer to section 39 in \cite{J03}. We introduce some notions from potential theory in this slighty more general setting, to be also applicable for the range in Section 6.

\begin{defi}\label{def:energy}
Let $A \subseteq \R^{1+d}$ be a Suslin set and $\mu$ be a probability measure supported on $A$, i.e. $\mu\in\pazocal{M}^1(A)$. Further, let $K:\R^{1 + d} \rightarrow [0,\infty]$ be a Lebesgue measurable function which is called the \emph{difference kernel}.
The \emph{K-energy of a probability measure $\mu$} is defined to be
\begin{equation*}
\pazocal{E}_K(\mu):=\int_A \int_A K(t-s,x-y)\, \textnormal{d}\mu(t,x)\, \textnormal{d}\mu(s,y)
\end{equation*}
and the \emph{equilibrium value of A} is defined as $\pazocal{E}^*_K := \inf_{\mu \in \pazocal{M}^1(A)} \pazocal{E}_K(\mu)$.
We define the \emph{K-capacity of $A$} as
\begin{equation*}
\Ca_K(A) := \frac{1}{\pazocal{E}^*_K}.
\end{equation*}
Whenever the kernel has the form $K(t,x)=\|(t,x)\|^{-\beta}$, we write $\pazocal{E}_\beta(\mu)$ for $\pazocal{E}_K(\mu)$ and $\Ca_\beta(A)$ for $\Ca_K(A)$ and we refer to them as the \emph{$\beta$-energy of a probability measure $\mu$} and the \emph{Riesz $\beta$-capacity of $A$}, respectively. Next we state Frostman's theorem.
\end{defi}

\begin{theorem}\label{th:Frostman's_Theorem} Let $\alpha > 0$. For any Suslin set $A\subseteq \R^{1 + d}$ one has
\begin{equation*}
\mathcal{P}^\alpha\m\dim A \geq \dim A = \sup\{\beta: \, \Ca_\beta(A)>0 \}.
\end{equation*}
\end{theorem}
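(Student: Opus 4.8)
The plan is to decouple the statement into two pieces. The inequality $\mathcal{P}^\alpha\m\dim A \geq \dim A$ is already available from part \emph{(i)} of the proof of Theorem \ref{le:General_Upper_Bound_Hausdorff}: since only covers by $\mathcal{P}^\alpha$-cylinders are admissible in the definition of $\mathcal{P}^\alpha\m\pazocal{H}^\beta$, one has $\pazocal{H}^\beta(A)\leq\mathcal{P}^\alpha\m\pazocal{H}^\beta(A)$ for every $\beta$, whence $\dim A\leq\mathcal{P}^\alpha\m\dim A$. So it remains to prove the genuine Frostman identity $\dim A=\sup\{\beta:\Ca_\beta(A)>0\}$ for Suslin sets $A\subseteq\R^{1+d}$.

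For the easy half, $\dim A\geq\sup\{\beta:\Ca_\beta(A)>0\}$, I would argue by contraposition: if $\beta>\dim A$ then $\pazocal{H}^\beta(A)=0$, and a standard estimate shows that a set of zero $\beta$-dimensional Hausdorff measure carries no probability measure of finite $\beta$-energy — indeed, for any $\mu\in\pazocal{M}^1(A)$ the energy $\pazocal{E}_\beta(\mu)$ dominates (up to a constant) $\sum_k |\square_k|^{-\beta}\mu(\square_k)^2$ over any cover, and letting the cover shrink forces $\pazocal{E}_\beta(\mu)=\infty$; hence $\Ca_\beta(A)=0$. For the reverse inequality, $\dim A\leq\sup\{\beta:\Ca_\beta(A)>0\}$, I would invoke the full strength of Frostman's lemma: for every $\beta<\dim A$ one has $\pazocal{H}^\beta(A)>0$ (in fact $=\infty$), and for a Suslin set this guarantees the existence of a compact subset $K\subseteq A$ with $0<\pazocal{H}^\beta(K)<\infty$ and, by Frostman's lemma, a nonzero measure $\nu$ supported on $K$ with $\nu(B(z,r))\lesssim r^\beta$ uniformly; normalising $\nu$ and integrating the tail bound against $K(t,x)=\|(t,x)\|^{-\beta'}$ for any $\beta'<\beta$ yields $\pazocal{E}_{\beta'}(\nu)<\infty$, so $\Ca_{\beta'}(A)>0$. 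Since $\beta'$ can be taken arbitrarily close to $\dim A$, the claim follows.

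The only genuinely delicate point is the passage to a compact subset carrying a Frostman measure when $A$ is merely Suslin rather than Borel or compact; this is exactly where one needs the capacitability of analytic sets (every analytic set of positive $\beta$-Hausdorff measure contains a compact subset of positive, finite $\beta$-measure). This is classical — it is precisely the setting treated, e.g., in \cite{MP10} (Section 4.3) and \cite{J03} (Section 39) — so in the write-up I would simply cite these sources for Frostman's lemma in the Suslin generality and for the energy–capacity characterisation of Hausdorff dimension, rather than reproving them. Everything else (the energy lower bound from a cover, the tail integration of the Frostman bound) is routine and standard, so the proof reduces to: (a) recall $\pazocal{H}^\beta\leq\mathcal{P}^\alpha\m\pazocal{H}^\beta$; (b) cite the classical energy method / Frostman's lemma for analytic sets to get $\dim A=\sup\{\beta:\Ca_\beta(A)>0\}$.
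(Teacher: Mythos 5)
Your proposal is correct and follows essentially the same route as the paper: the paper's proof consists precisely of (a) the observation $\pazocal{H}^\beta(A)\leq\mathcal{P}^\alpha\m\pazocal{H}^\beta(A)$ from part \emph{(i)} of the proof of Theorem \ref{le:General_Upper_Bound_Hausdorff}, and (b) a citation of the classical energy--capacity characterisation of Hausdorff dimension for analytic sets (Appendix B in \cite{BP17}), which is exactly the content you sketch and attribute to \cite{MP10} and \cite{J03}. Your expanded sketch of the classical argument, including the capacitability point for Suslin sets, is accurate but not needed beyond the citation.
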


\begin{proof}
This follows from the first assertion in the proof of Theorem \ref{le:General_Upper_Bound_Hausdorff} together with Appendix B in \cite{BP17}.
\end{proof}

The next lemma shows that we can work with an energy integral where the stable process $X$ is transformed into the kernel. 

\begin{lemma}\label{le:converted_kernel}
Let $T \subseteq\R_+$ be a Borel set and $\alpha\in (0,2]$. Let $X=(X_t)_{t\geq0}$ be a stochastic process in $\R^{d}$ with stationary increments and $f:\R_+\rightarrow\Rd$ be a Borel measurable function.  Define the difference kernel 
$$K^\beta(t,x) := \E \big[\|(t, \mathrm{sign }(t) \cdot X_{|t|} + x)\|^{-\beta}\big].$$ 
Then from $\Ca_{K^\beta}(\pazocal{G}_T(f)) > 0$
it follows that $\Pb$-almost surely $\Ca_\beta(\pazocal{G}_T(X+f)) > 0$ holds.
Hence $\pazocal{E}_{K^\beta}(\mu) < \infty$ for some probability measure $\mu\in\pazocal{M}^1(\pazocal{G}_T(f))$ implies 
\begin{equation*}
\dim \pazocal{G}_T(X+f) \geq \beta\quad\Pb \text{-almost surely}.
\end{equation*}
\end{lemma}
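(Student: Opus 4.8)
The plan is to deduce $\Ca_\beta(\pazocal{G}_T(X+f)) > 0$ from $\Ca_{K^\beta}(\pazocal{G}_T(f)) > 0$ by a change-of-variables argument that pushes a measure on the graph of $f$ forward to a (random) measure on the graph of $X+f$, and controls the $\beta$-energy of the latter by the $K^\beta$-energy of the former in expectation. Concretely: fix a probability measure $\mu \in \pazocal{M}^1(\pazocal{G}_T(f))$ with $\pazocal{E}_{K^\beta}(\mu) < \infty$; such a $\mu$ exists because $\Ca_{K^\beta}(\pazocal{G}_T(f)) > 0$ means the equilibrium value is finite, and one can pick $\mu$ with energy arbitrarily close to it (this is where Suslinity of the graph — here actually a Borel set, since $f$ is Borel measurable — is used, to guarantee the infimum is attained or approached by genuine probability measures rather than being vacuously $+\infty$). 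Since $\mu$ lives on $\pazocal{G}_T(f) = \{(t, f(t)) : t \in T\}$, it is the image under $t \mapsto (t, f(t))$ of its projection $\nu$ onto the time axis, a probability measure on $T$.

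Next I would define, for each $\omega$, the random measure $\mu^\omega$ on $\pazocal{G}_T(X(\omega)+f)$ as the image of $\nu$ under $t \mapsto (t, X_t(\omega) + f(t))$; equivalently $\mu^\omega$ is $\mu$ shifted in the space coordinate by the path of $X$. Then
\begin{equation*}
\pazocal{E}_\beta(\mu^\omega) = \int_T \int_T \big\| \big(t - s,\ X_t(\omega) - X_s(\omega) + f(t) - f(s)\big) \big\|^{-\beta}\, \textnormal{d}\nu(t)\, \textnormal{d}\nu(s).
\end{equation*}
Taking expectations, applying Tonelli (the integrand is nonnegative and measurable), and exploiting stationarity of the increments of $X$ — namely that $X_t - X_s$ for $t > s$ has the same law as $X_{t-s}$, while for $t < s$ one has $X_t - X_s \eqd -X_{s-t}$, which is exactly the $\mathrm{sign}(t-s)$ bookkeeping built into the definition of $K^\beta$ — one gets
\begin{equation*}
\E\big[\pazocal{E}_\beta(\mu^\omega)\big] = \int_T \int_T K^\beta\big(t - s,\ f(t) - f(s)\big)\, \textnormal{d}\nu(t)\, \textnormal{d}\nu(s) = \pazocal{E}_{K^\beta}(\mu) < \infty.
\end{equation*}
Hence $\pazocal{E}_\beta(\mu^\omega) < \infty$ for $\Pb$-almost every $\omega$, so for almost every $\omega$ the measure $\mu^\omega$ is a probability measure supported on $\pazocal{G}_T(X(\omega)+f)$ with finite $\beta$-energy, giving $\Ca_\beta(\pazocal{G}_T(X(\omega)+f)) > 0$. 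By Frostman's theorem (Theorem \ref{th:Frostman's_Theorem}) this yields $\dim \pazocal{G}_T(X+f) \geq \beta$ almost surely, which is the assertion.

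The main obstacle, and the point that needs care rather than routine computation, is the measurability and integrability housekeeping around the expectation exchange: one must check that $(\omega, t, s) \mapsto \|(t-s, X_t(\omega) - X_s(\omega) + f(t)-f(s))\|^{-\beta}$ is jointly measurable so that Tonelli applies — this uses that $X$ has a measurable (indeed càdlàg) version and $f$ is Borel — and one must be sure that $K^\beta(t,x) = \E[\|(t, \mathrm{sign}(t)X_{|t|} + x)\|^{-\beta}]$ is itself a well-defined Lebesgue measurable $[0,\infty]$-valued function, which is what makes it a legitimate difference kernel in the sense of Definition \ref{def:energy}. A secondary subtlety is the reduction from "$\Ca_{K^\beta} > 0$" to "there exists $\mu$ with $\pazocal{E}_{K^\beta}(\mu) < \infty$": this is immediate from the definition $\Ca_{K^\beta}(A) = 1/\pazocal{E}^*_{K^\beta}$ with $\pazocal{E}^*_{K^\beta} = \inf_\mu \pazocal{E}_{K^\beta}(\mu)$, since $\Ca_{K^\beta}(A) > 0$ forces $\pazocal{E}^*_{K^\beta} < \infty$ and hence the infimum is finite, so some (in fact a near-optimal) $\mu$ has finite energy. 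The final sentence of the lemma is then just the instantiation of the first assertion with this particular $\mu$.
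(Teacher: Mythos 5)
Your proposal is correct and follows essentially the same route as the paper: define the random measure on $\pazocal{G}_T(X(\omega)+f)$ obtained by shifting $\mu$ along the path of $X$ in the space coordinate, show via Tonelli and stationarity of increments that its expected $\beta$-energy equals $\pazocal{E}_{K^\beta}(\mu)$, and conclude by Frostman's theorem. Your extra remarks on factoring $\mu$ through its time-projection and on the measurability housekeeping are consistent with, and slightly more explicit than, what the paper records.
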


\begin{proof}
For every $\omega \in \Omega$, the pathwise bijection $(t,f(t)) \in \pazocal{G}_T(f)$ 
if and only if $(t,X_t(\omega)+f(t) ) \in \pazocal{G}_T(X_t(\omega)+f)$
yields the existence of some random probability measure $\nu_\omega \in \pazocal{M}^1(\pazocal{G}_T(X(\omega)+f))$ with $\nu_\omega (\widetilde{A}_\omega) = \mu (A)$ for all Borel sets $A \subseteq \pazocal{G}_T(f)$ where $\widetilde{A}_\omega := \{ (t,x+X_t(\omega)):\ (t,x)\in A  \}$. Therefore, Tonelli's theorem and the stationarity of the increments of $X$ yield
\begin{align*}
&\E\big[\pazocal{E}_\beta (\nu_\omega)\big]
=\ \E\bigg[\int_{\pazocal{G}_T(X(\omega)+f)}  \int_{\pazocal{G}_T(X(\omega)+f)} \|(t-s,x-y)\|^{-\beta}\, \textnormal{d}\nu_\omega(t,x)\, \textnormal{d}\nu_\omega(s,y)\bigg]\\
&=\ \E\bigg[\int_{\pazocal{G}_T(f)} \int_{\pazocal{G}_T(f)} \|(t-s,x + X_t(\omega)-(y + X_s(\omega)))\|^{-\beta}\, \textnormal{d}\mu(t,x)\, \textnormal{d}\mu(s,y)\bigg]\\
&=\ \int_{\pazocal{G}_T(f)} \int_{\pazocal{G}_T(f)} \E\big[\|(t-s,  X_t(\omega) - X_s(\omega) +x- y)\|^{-\beta}\big]\, \textnormal{d}\mu(t,x)\, \textnormal{d}\mu(s,y)\\
&=\ \int_{\pazocal{G}_T(f)} \int_{\pazocal{G}_T(f)} \E\big[\|(t-s, \mathrm{ sign }(t-s) \cdot  X_{|t-s|}(\omega) + x - y)\|^{-\beta}\big]\, \textnormal{d}\mu(t,x)\, \textnormal{d}\mu(s,y)\\[7pt]
&=\ \pazocal{E}_{K^\beta} (\mu)
\end{align*}
By assumption,  there exists $\mu \in \pazocal{M}^1(\pazocal{G}_T(f))$ such that $\pazocal{E}_{K^\beta} (\mu)< \infty$, therefore $\pazocal{E}_\beta (\nu_\omega) < \infty\,\,\Pb\text{-almost surely}.$
The rest of the claim follows by Frostman's theorem \ref{th:Frostman's_Theorem}.
\end{proof}

Frostman's lemma provides the suitable candidate for the probability measure $\mu$. We give a parabolic version of it.

\begin{theorem}\label{th:FrostmansLemma}
Let $A \subseteq \R^{1 + d}$ be a Borel set. If $\mathcal{P}^{\alpha}\m\dim A > \beta,$ then there exists $\mu \in \pazocal{M}^1(A)$ such that we have
\begin{equation*}
\mu\Bigg(\big[t,\, t+c\big]\times \prod_{i=1}^d \Big[x_i,\, x_i+c^{1/\alpha}\Big]\Bigg) \lesssim 
\begin{cases}
c^{\, \beta}, & \alpha \in (0,1],\\
c^{\, \beta/ \alpha}, & \alpha \in [1,\infty)
\end{cases}
\end{equation*} 
for every $c \in (0,1]$ and $t,  x_1,\dots,x_d \in \R$. 
\end{theorem}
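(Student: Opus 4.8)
The statement is the parabolic analogue of the classical Frostman lemma, so the plan is to adapt the standard dyadic-martingale (or max-flow/min-cut) proof, replacing ordinary dyadic cubes by dyadic $\alpha$-parabolic cylinders. First I would fix $\beta < \mathcal{P}^\alpha\text{-}\dim A$ and recall from the first assertion in the proof of Theorem~\ref{le:General_Upper_Bound_Hausdorff} (and the content-versus-measure equivalence used in Proposition~\ref{pro:P2=2P}, via Proposition~4.9 of \cite{MP10}) that $\mathcal{P}^\alpha\text{-}\dim A > \beta$ forces the $\alpha$-parabolic $\beta$-Hausdorff content $\Phi_\alpha^\beta(A)$ to be strictly positive; in fact one can pass to a compact subset $A_0 \subseteq A$ with $\Phi_\alpha^\beta(A_0) > 0$, since the content is attained in the limit on compacta. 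Working with a compact set is what makes the construction of $\mu$ possible.

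Next I would set up the dyadic parabolic grid at scale $2^{-n}$: cubes of the form $[\,j 2^{-n}, (j+1)2^{-n}\,] \times \prod_{i=1}^d [\,k_i 2^{-n/\alpha}, (k_i+1)2^{-n/\alpha}\,]$ (with a harmless rounding of $2^{-n/\alpha}$, or equivalently re-indexing so that the time-side is $2^{-\alpha n}$ in the regime $\alpha \ge 1$). The point is that for such a cylinder the diameter satisfies $|\mathsf{P}| \asymp 2^{-n}$ when $\alpha \le 1$ (space dominates) and $|\mathsf{P}| \asymp 2^{-n/\alpha}$ when $\alpha \ge 1$ (time dominates — this is exactly the dichotomy flagged in the introduction), which is why the two cases in the statement read $c^\beta$ versus $c^{\beta/\alpha}$: in both cases the target bound is $|\mathsf{P}|^\beta$. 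Then I would run the usual bottom-up construction: on a sufficiently fine level $N$, put mass on each occupied cylinder proportional to its diameter-to-the-$\beta$; then, level by level going up from $N$ to $0$, cap the mass of any cylinder at $|\mathsf{P}|^\beta$ by uniformly rescaling the mass of its sub-cylinders. This produces, for each $N$, a measure $\mu_N$ supported on $A_0$ (more precisely on the union of level-$N$ cylinders meeting $A_0$) with $\mu_N(\mathsf{P}) \le |\mathsf{P}|^\beta$ for every dyadic parabolic cylinder $\mathsf{P}$ of level $\le N$, and with total mass $\mu_N(\R^{1+d}) \gtrsim \Phi_\alpha^\beta(A_0) > 0$; the mass lower bound is the combinatorial heart and follows because any parabolic cover of $A_0$ refines to a dyadic-parabolic cover of comparable cost, and the capping procedure loses mass only on cylinders already saturated at the bound $|\mathsf{P}|^\beta$. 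Taking a weak-$*$ limit $\mu$ of a normalised subsequence of the $\mu_N$, supported on the compact set $A_0$, and then extending the dyadic estimate to arbitrary cylinders $[t,t+c]\times\prod[x_i,x_i+c^{1/\alpha}]$ by covering each such cylinder with a bounded number ($\lesssim_d 1$) of dyadic parabolic cylinders at the dyadic level $n$ with $2^{-n}\le c < 2^{-(n-1)}$ (resp. $2^{-\alpha n}\le c$), gives the claimed bound with an implicit constant depending only on $d$ and $\alpha$.

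The main obstacle I anticipate is the geometric bookkeeping in the two regimes simultaneously: one must be careful that the dyadic parabolic cylinders genuinely tile (or boundedly overlap) $\R^{1+d}$ and that the relation between the mesh parameter and the diameter $|\mathsf{P}|$ is handled consistently — for $\alpha < 1$ the space-side $2^{-n/\alpha}$ is the \emph{smaller} length, while for $\alpha > 1$ it is the larger, so the "level" that controls the diameter flips, and the rounding of non-dyadic side lengths $2^{-n/\alpha}$ must be absorbed into the $\lesssim$ constants without breaking nestedness. A clean way to sidestep part of this is to invoke the content/measure equivalence to reduce everything to a statement about the content $\Phi_\alpha^\beta$, and then to quote a general form of Frostman's lemma for Hausdorff content on a metric space equipped with the (non-Euclidean) parabolic quasi-metric $\rho\big((t,x),(s,y)\big) = |t-s| \vee \|x-y\|^\alpha$ for $\alpha\le 1$ and its analogue for $\alpha\ge 1$ — but since the paper develops everything by hand, I would present the dyadic construction explicitly, treating $\alpha\le 1$ and $\alpha\ge 1$ as two parallel cases that differ only in which coordinate sets the dyadic scale.
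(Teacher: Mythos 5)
Your proposal is correct and is essentially the paper's own approach: the paper's proof of Theorem~\ref{th:FrostmansLemma} consists of a single sentence deferring to the classical dyadic construction in Section~4.4 of \cite{MP10}, which is exactly the bottom-up mass-redistribution argument over a dyadic parabolic grid that you spell out (including the content positivity, the capping at $|\mathsf{P}|^\beta$, the weak-$*$ limit, and the bounded-overlap extension to non-dyadic cylinders). One cosmetic slip: for $\alpha\le 1$ the \emph{time} side $2^{-n}$ dominates the diameter and for $\alpha\ge 1$ the \emph{space} side $2^{-n/\alpha}$ does, so your two parenthetical labels are swapped, though the formulas you actually use are the right ones.
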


\begin{proof} The parabolic case can easily be proven along the lines of the classical case, see e.g. section 4.4 in \cite{MP10}.
\end{proof}

The following lemma is a refinement of (2.7) in \cite{PS16}.

\begin{lemma}\label{le:density_monotocity}
Let $t \in \R$ be fixed and $h: \Rd \rightarrow \R,\ h(x) = \|(t,x)\|^{-\beta}=(t^2 + \|x\|^2)^{-\beta/2}$. Then $h$ is rotationally symmetric and the mapping $r \mapsto h(r\cdot y)$ is non-increasing for $r=\|x\|$ and does not depend on $y=x/\|x\| \in S^{d-1}$. Further, let $p:\Rd \rightarrow \R$ be a rotationally symmetric function such that also $r \mapsto p(r\cdot y)$ is non-increasing for $r=\|x\|$ and $y=x/\|x\| \in S^{d-1}$. Then for all $u \in \Rd$ we have
\begin{equation*}
\int_\Rd h(x+u) \cdot p(x) \, \textnormal{d}x \lesssim \int_\Rd h(x)  \cdot p(x) \, \textnormal{d}x,
\end{equation*}
provided that the integrals exist.
\end{lemma}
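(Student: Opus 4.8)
The plan is to reduce everything to a comparison of radial integrals by exploiting rotational symmetry, and then to invoke a classical rearrangement-type bound. First I would observe that the claimed inequality is, for fixed $t$, a statement purely about radially symmetric functions on $\Rd$: both $h(\cdot) = \|(t,\cdot)\|^{-\beta}$ and $p$ are rotationally symmetric, and $r\mapsto h(ry)$, $r\mapsto p(ry)$ are non-increasing. The non-increasing monotonicity of $h$ is elementary since $t$ is fixed and $r\mapsto (t^2+r^2)^{-\beta/2}$ is manifestly decreasing in $r\geq 0$; I would state this first and record that $h$ does not depend on the angular variable. Then the target inequality is
\begin{equation*}
\int_\Rd h(x+u)\,p(x)\,\mathrm dx \ \lesssim\ \int_\Rd h(x)\,p(x)\,\mathrm dx,
\end{equation*}
and the point is that shifting the ``peak'' of $h$ away from the origin, where $p$ is largest, can only decrease the integral up to a constant.

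The key step is to control $\int_\Rd h(x+u)p(x)\,\mathrm dx$ by an integral with $u=0$. Here I see two routes. The clean route is to use the layer-cake (distribution function) representation: write $h = \int_0^\infty \1_{\{h>\lambda\}}\,\mathrm d\lambda$ and similarly for $p$, so that
\begin{equation*}
\int_\Rd h(x+u)p(x)\,\mathrm dx = \int_0^\infty\!\!\int_0^\infty \bigl|\{x: h(x+u)>\lambda\}\cap\{x: p(x)>\mu\}\bigr|\,\mathrm d\lambda\,\mathrm d\mu.
\end{equation*}
Since $h$ and $p$ are radial and radially non-increasing, each superlevel set $\{h>\lambda\}$ is a centred ball $B(0,\rho_h(\lambda))$ and $\{p>\mu\}=B(0,\rho_p(\mu))$, so $\{h(\cdot+u)>\lambda\}=B(-u,\rho_h(\lambda))$. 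Thus the inner quantity is $|B(-u,\rho_h(\lambda))\cap B(0,\rho_p(\mu))|$, which is bounded above by $|B(0,\rho_h(\lambda))\cap B(0,\rho_p(\mu))| = |B(0,\rho_h(\lambda)\wedge\rho_p(\mu))|$ — intersecting two balls is maximized when they are concentric (this is where the constant $1$ suffices, giving in fact ``$\leq$'' rather than merely ``$\lesssim$''). Re-assembling the double integral gives exactly $\int_\Rd h(x)p(x)\,\mathrm dx$. Alternatively, one can cite the Riesz/Hardy–Littlewood rearrangement inequality directly: since $h$ and $p$ equal their own symmetric decreasing rearrangements, $\int h(x+u)p(x)\,\mathrm dx \le \int h^*(x)p^*(x)\,\mathrm dx = \int h(x)p(x)\,\mathrm dx$; I would mention this as the conceptual underpinning but carry out the layer-cake argument since it is self-contained and makes the ``$\lesssim$'' (indeed ``$\le$'') transparent.

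Finally I would note that the hypothesis ``provided the integrals exist'' lets us ignore convergence subtleties: the layer-cake manipulation is just Tonelli applied to non-negative integrands, so no issue arises, and if the right-hand side is infinite there is nothing to prove. The main obstacle, such as it is, is purely expository: making precise that superlevel sets of a radial radially-decreasing function are centred balls (allowing for the degenerate cases where $h$ or $p$ is constant on a ball or has jumps, so $\rho_h,\rho_p$ are defined as generalized inverses), and checking that the translated superlevel set of $h(\cdot+u)$ is the correspondingly translated ball. Once that bookkeeping is in place, the inequality $|B(-u,\rho)\cap B(0,\sigma)|\le |B(0,\rho\wedge\sigma)|$ is geometrically obvious and the proof closes immediately.
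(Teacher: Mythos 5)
Your proof is correct, but it takes a genuinely different route from the paper. The paper's argument is a two-line pointwise splitting: it decomposes $\Rd$ into $\{\|x\|<\|x+u\|\}$ and its complement, uses $h(x+u)\leq h(x)$ on the first piece and $p(x)\leq p(x+u)$ on the second (both by radial monotonicity), and then a translation $x\mapsto x+u$ in the second integral; this yields the bound with constant $2$, which is all that the symbol $\lesssim$ requires. Your layer-cake argument is essentially the Hardy--Littlewood rearrangement inequality specialized to functions that already equal their symmetric decreasing rearrangements, and it buys the sharper conclusion
\begin{equation*}
\int_\Rd h(x+u)\,p(x)\,\textnormal{d}x \ \leq\ \int_\Rd h(x)\,p(x)\,\textnormal{d}x
\end{equation*}
with constant $1$, at the cost of the bookkeeping you correctly flag (superlevel sets of radial non-increasing functions are centred balls up to null sets, translated superlevel sets are translated balls, and $|B(-u,\rho)\cap B(0,\sigma)|\leq |B(0,\rho\wedge\sigma)|$ because the intersection sits inside the smaller ball). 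Both proofs are complete and rigorous; the paper's is shorter and entirely elementary, while yours is conceptually cleaner and identifies the statement as a known rearrangement phenomenon. Since the lemma is only ever invoked through $\lesssim$, the extra sharpness is not needed downstream, but nothing in your argument is wrong or missing.
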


\begin{proof}
The first part is obvious. Further, by monotonicity we have
\begin{align*}
& \int_\Rd h(x+u) \cdot p(x) \, \textnormal{d}x\\
&=\ \int_{\{ \|x\| < \|x+u\| \}} \underbrace{h( x+u)}_{\leq h(x)} \cdot \ p(x) \, \textnormal{d}x + \int_{\{ \|x\| \geq \|x+u\|\}} h(x+u) \cdot \underbrace{p(x)}_{\leq p(x+u)}  \, \textnormal{d}x\\
&\leq\ 2 \int_\Rd h(x) \cdot p(x)  \, \textnormal{d}x,
\end{align*}
as claimed.
\end{proof}

Inspired by Lemma 2.5 in \cite{PS16}, we give a priori estimates for the difference kernel $K^\beta = \E \big[\|(t,\mathrm{ sign }(t) \cdot X_{|t|}(\omega) + x)\|^{-\beta}\big]$ from Lemma \ref{le:converted_kernel} that will later turn out to provide appropriate estimates of the energy integral.

\begin{lemma}\label{le:kernel_estimate}
Let $\alpha \in (0,2)$ and $X=(X_t)_{t \geq 0}$ be an isotropic $\alpha$-stable L\'{e}vy process in $\Rd$. Let $\beta \geq 0$ and $\tau\in\R$, $\delta\in\Rd$ be such that $|\tau|\in(0,1]$, $\|\delta\| \in [0,1]$.  Then for the difference kernel $K^\beta(\tau,\delta) := \E\big[\|(\tau, \mathrm{ sign }(\tau) \cdot X_{|\tau|} + \delta)\|^{-\beta}\big]$ from Lemma \ref{le:converted_kernel} one has
\begin{align}\label{eq:Kernel_tau}
K^\beta(\tau,\delta)
\lesssim
\begin{cases}
|\tau|^{-\beta}, \\
|\tau|^{-\beta/\alpha}, & \text{ for } \beta < d,\\
|\tau|^{(1-1/\alpha) d - \beta}, &\text{ for } \beta > d.
\end{cases}
\end{align}
and 
\begin{align}\label{eq:Kernel_delta}
K^\beta(\tau,\delta)
\lesssim
\begin{cases}
\|\delta\|^{-\beta}, & \text{ for } \alpha \in (0,1], |\tau| \leq \|\delta\|,\\
\|\delta\|^{-\beta}, &  \text{ for } \alpha \in [1,2), \beta < d, |\tau| \leq \|\delta\|^\alpha,\\
\|\delta\|^{(\alpha - 1)d - \alpha\beta}, & \text{ for } \alpha \in [1,2),\, \beta > d, |\tau| \leq \|\delta\|^\alpha.
\end{cases}
\end{align}
\end{lemma}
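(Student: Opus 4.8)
The plan is to estimate the expectation $K^\beta(\tau,\delta) = \E\big[\|(\tau,\operatorname{sign}(\tau)\cdot X_{|\tau|}+\delta)\|^{-\beta}\big]$ by splitting the underlying integral over $\Rd$ according to where the density of $X_{|\tau|}$ concentrates. Write $\sigma := |\tau|^{1/\alpha}$ for the spatial scale of $X_{|\tau|}$; by the self-similarity \eqref{eq_p_selfsimilar} the density of $X_{|\tau|}$ is $x\mapsto \sigma^{-d}p(x/\sigma)$. Then
\[
K^\beta(\tau,\delta) = \int_\Rd \big(\tau^2 + \|x+\operatorname{sign}(\tau)\delta\|^2\big)^{-\beta/2}\, \sigma^{-d}p(x/\sigma)\, \mathrm{d}x
= \int_\Rd \big(\tau^2 + \|\sigma z + \operatorname{sign}(\tau)\delta\|^2\big)^{-\beta/2} p(z)\, \mathrm{d}z
\]
after substituting $x=\sigma z$. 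For the bound \eqref{eq:Kernel_tau} I would set $\delta=0$ (the general case reduces to this by Lemma~\ref{le:density_monotocity}, applied with $h(x)=\|(\tau,x)\|^{-\beta}$ and $p$ the rescaled stable density, both rotationally symmetric and radially non-increasing — this is precisely why Lemma~\ref{le:density_monotocity} was stated). Then $K^\beta(\tau,0) = \int_\Rd (\tau^2+\sigma^2\|z\|^2)^{-\beta/2}p(z)\,\mathrm{d}z$. The first bound $|\tau|^{-\beta}$ is immediate from $\tau^2+\sigma^2\|z\|^2\geq\tau^2$ and $\int p=1$. For the other two, since $\sigma=|\tau|^{1/\alpha}\geq|\tau|$ (as $\alpha\le 2$, actually we only need $\alpha\le1$ vs $\alpha\ge1$ carefully — here $|\tau|\le1$ so $|\tau|^{1/\alpha}\le|\tau|$ when $\alpha\le1$; one must track this), bound $\tau^2+\sigma^2\|z\|^2 \geq \sigma^2\min(1,\|z\|^2)$-type estimates and split $\{\|z\|\le 1\}$ versus $\{\|z\|>1\}$: on the far region use the tail bound \eqref{eq:p_tail_estimates}, $p(z)\lesssim\|z\|^{-d-\alpha}$, and on the near region use boundedness of $p$ together with the integral $\int_{\|z\|\le 1}(\tau^2+\sigma^2\|z\|^2)^{-\beta/2}\mathrm{d}z$, which behaves like $\sigma^{-d}$ times a constant when $\beta<d$ (integrable singularity after rescaling) and like $|\tau|^{-\beta}\sigma^{-d}\cdot\sigma^{d}\,$-correction... — concretely, substituting $z = (|\tau|/\sigma)w = |\tau|^{1-1/\alpha}w$ converts it to $|\tau|^{-\beta}\cdot|\tau|^{(1-1/\alpha)d}\int(1+\|w\|^2)^{-\beta/2}\mathrm{d}w$ over a rescaled ball, giving the $\beta<d$ and $\beta>d$ dichotomy in \eqref{eq:Kernel_tau} once the far-field contribution (which is $\lesssim|\tau|^{-\beta/\alpha}$ when $\beta<d+\alpha$, absorbed) is checked to be dominated.

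For \eqref{eq:Kernel_delta} the roles of $\tau$ and $\delta$ swap. Now the hypothesis is $|\tau|\le\|\delta\|$ (case $\alpha\le1$) or $|\tau|\le\|\delta\|^\alpha$ (case $\alpha\in[1,2)$), which says exactly that the spatial scale $\sigma=|\tau|^{1/\alpha}$ is $\lesssim\|\delta\|$. I would again use Lemma~\ref{le:density_monotocity} to replace the shift by $\operatorname{sign}(\tau)\delta$: but here the roles are reversed — one wants to keep $\delta$ and absorb the $X_{|\tau|}$-spread, so one applies the lemma with $p$ the stable density and $h(x)=\|(\tau,x+\operatorname{sign}(\tau)\delta)\|^{-\beta}$ centered appropriately, or more simply splits the $z$-integral into $\{\sigma\|z\|\le\tfrac12\|\delta\|\}$, where $\|\sigma z+\operatorname{sign}(\tau)\delta\|\geq\tfrac12\|\delta\|$ so the integrand is $\lesssim\|\delta\|^{-\beta}$ and the $p$-mass is $\le1$, and the complement $\{\sigma\|z\|>\tfrac12\|\delta\|\}$, i.e.\ $\|z\|>\tfrac12\|\delta\|/\sigma$. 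On this far region, since $\|\delta\|/\sigma\ge\|\delta\|^{1-1/\alpha}\ge1$-ish (using the case hypothesis), the tail bound \eqref{eq:p_tail_estimates} gives $\int_{\|z\|>R}\|z\|^{-\beta}p(z)\,\mathrm{d}z \lesssim \int_{\|z\|>R}\|z\|^{-\beta-d-\alpha}\mathrm{d}z \asymp R^{-\beta-\alpha}$ when $\beta<d$, with $R\asymp\|\delta\|/\sigma = \|\delta\|^{1-1/\alpha}$ in the $\alpha\ge1$ normalization $|\tau|=\|\delta\|^\alpha$ — tracking exponents yields the $\|\delta\|^{(\alpha-1)d-\alpha\beta}$ term when $\beta>d$ and the cleaner $\|\delta\|^{-\beta}$ when $\beta<d$.

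The main obstacle I expect is bookkeeping the exponents correctly across the two regimes $\alpha\lessgtr1$, since which of $|\tau|$ or $\sigma=|\tau|^{1/\alpha}$ is larger flips, and the "near" versus "far" split in $z$ has to be calibrated to the correct scale in each case; a single unified substitution $z\mapsto(|\tau|/\sigma)w = |\tau|^{1-1/\alpha}w$ should make the dichotomy $\beta<d$ / $\beta>d$ fall out uniformly, but one must verify that the far-field (tail) contributions are always subleading, which requires the implicit restriction $\beta<d+\alpha$ in the $\beta<d$ sub-case (trivially true) and a separate check that for $\beta>d$ the tail term $\|\delta\|^{-\beta-\alpha}$-scaled quantity is dominated by $\|\delta\|^{(\alpha-1)d-\alpha\beta}$, i.e.\ comparing the two explicit powers. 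Everything else is routine: rotational symmetry and unimodality of $p$ (quoted from \cite{S99}) plus the tail estimate \eqref{eq:p_tail_estimates} and Lemma~\ref{le:density_monotocity} are exactly the tools needed, and the computation is the stable-process analogue of Lemma~2.5 in \cite{PS16}, the difference being that the exponential Gaussian tail there is replaced by the polynomial tail here, which is why the $\beta>d$ cases acquire a genuinely different exponent rather than collapsing.
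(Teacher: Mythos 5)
Your treatment of \eqref{eq:Kernel_tau} is essentially the paper's argument: rescale by self-similarity, discard the shift $\delta$ via Lemma~\ref{le:density_monotocity}, then split near/far around the origin. The paper reaches the $\beta<d$ case a little faster by quoting the existence of negative moments $\E[\|X_1\|^{-\beta}]<\infty$ for $\beta<d$, but your substitution $z=|\tau|^{1-1/\alpha}w$ produces the same exponents, and your worry about the far-field term being subleading resolves itself because for $\alpha\le 1$ the third line of \eqref{eq:Kernel_tau} is already implied by the trivial bound $|\tau|^{-\beta}$. So far, so good.

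The gap is in \eqref{eq:Kernel_delta}. After your split at $\sigma\|z\|=\tfrac12\|\delta\|$, the far region $\{\sigma\|z\|>\tfrac12\|\delta\|\}$ still contains the singular point $z_0=-\mathrm{sign}(\tau)\,\delta/\sigma$ of the kernel $z\mapsto\|(\tau,\sigma z+\mathrm{sign}(\tau)\delta)\|^{-\beta}$, so you may not replace the integrand there by $\|z\|^{-\beta}p(z)$: near $z_0$ the kernel has size $|\tau|^{-\beta}$, not $\|z_0\|^{-\beta}$, and for $\beta>d$ the spatial singularity $\|\sigma z+\mathrm{sign}(\tau)\delta\|^{-\beta}$ is not even locally integrable, so a cap is mandatory. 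The paper resolves this with a second, two-scale decomposition centred at the singularity: on the set where $\|\mathrm{sign}(\tau)x+\widetilde\delta\|\le|\widetilde\tau|$ the kernel is capped by $|\widetilde\tau|^{-\beta}$ while the tail estimate \eqref{eq:p_tail_estimates} bounds $p$ by $\|\widetilde\delta\|^{-d-\alpha}$, yielding a term $|\tau|^{d+1-\beta}\|\delta\|^{-d-\alpha}$ (their $I_2$); on the annulus $|\widetilde\tau|\le\|\mathrm{sign}(\tau)x+\widetilde\delta\|\le\|\widetilde\delta\|$ one integrates $r^{d-\beta-1}$ (their $I_3$); and it is precisely there that the hypotheses $|\tau|\le\|\delta\|$ resp.\ $|\tau|\le\|\delta\|^\alpha$, together with the dichotomy $\beta<d$ versus $\beta>d$, are consumed to turn these terms into $\|\delta\|^{-\beta}$ or $\|\delta\|^{(\alpha-1)d-\alpha\beta}$. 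Your far-field count ($R^{-\beta-\alpha}$ with $R=\|\delta\|/\sigma$) does not reproduce the exponent $(\alpha-1)d-\alpha\beta$, and the condition ``$\beta<d$'' you attach to it plays no role in the integral you actually wrote --- both symptoms of the missing decomposition. Without it the proof of \eqref{eq:Kernel_delta} is incomplete.
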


\begin{proof} 
Let $p(x)$ denote the density function of $X_1 \eqd |\tau|^{-1/\alpha} X_{|\tau|}$. We define rescaled increments $\widetilde{\tau} := \tau/|\tau|^{1/\alpha} $ and $\widetilde{\delta} := \delta/|\tau|^{1/\alpha}$. Trivial estimation always yields
\begin{equation*}
\E\big[\|(\tau, \mathrm{ sign }(\tau) \cdot X_{|\tau|} + \delta)\|^{-\beta}\big] \leq |\tau|^{-\beta}.
\end{equation*}

The self-similarity of the stable L\'{e}vy process and Lemma \ref{le:density_monotocity} yield
\begin{equation}\label{kernel-estimate:1}\begin{split}
& \E\big[\|(\tau, \mathrm{ sign }(\tau) \cdot X_{|\tau|} + \delta)\|^{-\beta}\big] 
= |\tau|^{-\beta/\alpha} \int_\Rd \|(\widetilde\tau,\mathrm{ sign }(\tau) \cdot x+\widetilde{\delta})\|^{-\beta} \cdot p(x)\, \textnormal{d}x\\
& \quad= |\tau|^{-\beta/\alpha} \int_\Rd \|(|\widetilde\tau|, x+\mathrm{ sign }(\tau)\cdot\widetilde{\delta})\|^{-\beta} \cdot p(x)\, \textnormal{d}x\\
& \quad \lesssim |\tau|^{-\beta/\alpha} \int_\Rd \|(|\widetilde\tau|,x)\|^{-\beta} \cdot p(x) \, \textnormal{d}x.
\end{split}\end{equation}
Let $\beta < d$. Then by \eqref{kernel-estimate:1} we get
\begin{align*}
& \E\big[\|(\tau, \mathrm{ sign }(\tau) \cdot X_{|\tau|} + \delta)\|^{-\beta}\big] \lesssim |\tau|^{-\beta/\alpha} \int_\Rd \|x\|^{-\beta} \cdot p(x) \, \textnormal{d}x\\
& \quad \lesssim |\tau|^{-\beta / \alpha} \cdot \E\big[\|X_1\|^{-\beta}\big]
 \lesssim |\tau|^{-\beta / \alpha},
\end{align*}
since negative moments of order $\beta<d$ exist; see Lemma 3.1 in \cite{BMS}. 

Let $\beta > d$.  Then by \eqref{kernel-estimate:1} one has using the volume of a ball with radius $\widetilde{\tau}$
\begin{align*}
& \E\big[\|(\tau, \mathrm{ sign }(\tau) \cdot X_{|\tau|}+\delta)\|^{-\beta}\big] \lesssim |\tau|^{-\beta/\alpha} \int_\Rd \|(|\widetilde{\tau}|,x)\|^{-\beta} \cdot p(x)\, \textnormal{d}x\\
& \quad \leq |\tau|^{-\beta/\alpha} \bigg( \int_{\{\|x\| < |\widetilde{\tau}|  \}}  |\widetilde{\tau}|^{-\beta} \cdot p(x)\, \textnormal{d}x + \int_{\{\|x\| \geq |\widetilde{\tau}|  \}} \|x\|^{-\beta} \cdot p(x)\, \textnormal{d}x \bigg)\\
& \quad \leq |\tau|^{-\beta/\alpha} \bigg( |\widetilde{\tau}|^{\, d - \beta} + \int_{\{ |\widetilde{\tau}| \leq \|x\| \leq 1  \}} \|x\|^{-\beta}\, \textnormal{d}x + \int_{\{ \|x\| > 1 \}} p(x)\, \textnormal{d}x \bigg)\\
& \quad \leq |\tau|^{-\beta/\alpha} \bigg( |\widetilde{\tau}|^{\, d - \beta} + \int_{|\widetilde{\tau}|}^1 \int_{S^{d-1}} \|ry\|^{-\beta} \cdot r^{d-1}\, \textnormal{d}y\, \textnormal{d}r +1 \bigg)\\
& \quad \lesssim |\tau|^{-\beta/\alpha} \bigg( |\widetilde{\tau}|^{\, d - \beta} +  \int_{|\widetilde{\tau}|}^1 r^{d-\beta-1}\, \textnormal{d}r \bigg)\\
& \quad \lesssim |\tau|^{-\beta/\alpha} \cdot |\widetilde{\tau}|^{\, d - \beta} = |\tau|^{-\beta/\alpha} \cdot |\tau|^{(1-1/\alpha)(d - \beta)} = |\tau|^{(1-1/\alpha) d - \beta}.
\end{align*}
This proves (\ref{eq:Kernel_tau}). To prove (\ref{eq:Kernel_delta}) consider the region $\|x\| \leq |\widetilde{\delta}|/2$ which yields
\begin{equation*}
\| \mathrm{ sign }(\tau) \cdot x + \widetilde{\delta}\| \geq \big\||x\| - \|\widetilde{\delta}\| \big|= \|\widetilde{\delta}\| - \|x\| \geq \frac{1}{2} \cdot \|\widetilde{\delta}\|.
\end{equation*}
Thus for the estimates in (\ref{eq:Kernel_delta}) we have
\begin{align*}
& \E\big[\|(\tau, \mathrm{ sign }(\tau) \cdot X_{|\tau|}+\delta)\|^{-\beta}\big] = |\tau|^{-\beta/\alpha} \int_\Rd \|(\widetilde{\tau},  \mathrm{ sign }(\tau) \cdot x+ \widetilde{\delta})\|^{-\beta} \cdot p(x)\, \textnormal{d}x\\
& \quad \lesssim \|\delta\|^{-\beta} + \underbrace{|\tau|^{-\beta/\alpha} \int_{\{\|x\|\geq \|\widetilde{\delta}\|/2, \, \| \mathrm{ sign }(\tau) \cdot x+\widetilde{\delta}\| \geq |\widetilde{\tau}|\}} \| \mathrm{ sign }(\tau) \cdot x+\widetilde{\delta}\|^{-\beta} \cdot p(x) \, \textnormal{d}x}_{=:I_{1}}\\
&\quad \quad +\ \underbrace{|\tau|^{-\beta/\alpha} \int_{\{\|x\|\geq \|\widetilde{\delta}\|/2, \, \| \mathrm{ sign }(\tau) \cdot x+\widetilde{\delta}\| \leq |\widetilde{\tau}|\}} \widetilde{\tau}^{-\beta} \cdot p(x) \, \textnormal{d}x}_{=:I_{2}}.
\end{align*}

Now,
\begin{align*}
I_{1} &= |\tau|^{-\beta/\alpha} \int_{\{\|x\|\geq \|\widetilde{\delta}\|/2, \, \| \mathrm{ sign }(\tau) \cdot x+\widetilde{\delta}\| \geq |\widetilde{\tau}|\}} \| \mathrm{ sign }(\tau) \cdot x+\widetilde{\delta}\|^{-\beta} \cdot p(x) \, \textnormal{d}x\\
&= |\tau|^{-\beta/\alpha} \int_{\{\|x\|\geq \|\widetilde{\delta}\|/2, \, \|\mathrm{ sign }(\tau) \cdot x+\widetilde{\delta}\| \geq |\widetilde{\tau}|, \, \| \mathrm{ sign }(\tau) \cdot x+\widetilde{\delta}\| \geq \|\widetilde{\delta}\|\}} \|  \mathrm{ sign }(\tau) \cdot x+\widetilde{\delta}\|^{-\beta} \cdot p(x) \, \textnormal{d}x\\
&\quad +\ |\tau|^{-\beta/\alpha} \int_{\{\|x\|\geq \|\widetilde{\delta}\|/2, \, \| \mathrm{ sign }(\tau) \cdot x+\widetilde{\delta}\| \geq |\widetilde{\tau}|, \, \| \mathrm{ sign }(\tau) \cdot x+\widetilde{\delta}\| \leq \|\widetilde{\delta}\|\}} \| \mathrm{ sign }(\tau) \cdot x+\widetilde{\delta}\|^{-\beta} \cdot p(x) \, \textnormal{d}x\\
&\leq \|\delta\|^{-\beta} + \underbrace{|\tau|^{-\beta/\alpha} \int_{\{\|x\|\geq \|\widetilde{\delta}\|/2, \, \|\widetilde{\delta}\| \geq \| \mathrm{ sign }(\tau) \cdot x+\widetilde{\delta}\| \geq |\widetilde{\tau}|\}} \| \mathrm{ sign }(\tau) \cdot x+\widetilde{\delta}\|^{-\beta} \cdot p(x) \, \textnormal{d}x}_{=: I_{3}}.
\end{align*}

By using (\ref{eq:p_tail_estimates}) we further have
\begin{equation}\label{kernel-estimste:2}\begin{split}
I_{3} &= |\tau|^{-\beta/\alpha} \int_{\{\|x\|\geq \|\widetilde{\delta}\|/2, \, \|\widetilde{\delta}\| \geq \| \mathrm{ sign }(\tau) \cdot x+\widetilde{\delta}\| \geq |\widetilde{\tau}|\}} \|\mathrm{ sign }(\tau) \cdot x +\widetilde{\delta}\|^{-\beta} \cdot p(x) \, \textnormal{d}x\\
&\lesssim |\tau|^{-\beta/\alpha} \cdot \|\widetilde{\delta}\|^{-d-\alpha} \int_{\{\|\widetilde{\delta}\| \geq \| \mathrm{ sign }(\tau) \cdot x+\widetilde{\delta}\| \geq |\widetilde{\tau}|\}} \| \mathrm{ sign }(\tau) \cdot x+\widetilde{\delta}\|^{-\beta}\, \textnormal{d}x\\
&= |\tau|^{-\beta/\alpha} \cdot \|\widetilde{\delta}\|^{-d-\alpha} \int_{|\widetilde{\tau}|}^{\|\widetilde{\delta}\|} r^{d-\beta-1}\, \textnormal{d}r.
\end{split}\end{equation}

For $\alpha \in [1,2)$, $\beta < d$ and $|\tau| \leq \|\delta\|^\alpha$ by \eqref{kernel-estimste:2} we get
\begin{align*}
I_{3} &\lesssim |\tau|^{-\beta/\alpha}\cdot \|\widetilde{\delta}\|^{-\alpha-\beta}= |\tau|\cdot \|\delta\|^{-\alpha-\beta}\lesssim \|\delta\|^{-\beta},
\end{align*}

whereas for $\alpha \in (0,1]$ and $|\tau|\leq \|\delta\|$ one has
\begin{align*}
I_{3} \lesssim |\tau| \cdot \|\delta\|^{-\alpha-\beta} \leq \|\delta\|^{1-\alpha} \cdot \|\delta\|^{-\beta} \lesssim \|\delta\|^{-\beta}.
\end{align*}
For $\alpha \in [1,2)$, $\beta > d$ and $|\tau| \leq \|\delta\|^\alpha$ by \eqref{kernel-estimste:2} one has
\begin{align*}
I_{3} & \lesssim |\tau|^{-\beta/\alpha} \cdot \|\widetilde{\delta}\|^{-d - \alpha} \int_{\widetilde{\tau}}^\infty r^{-\beta} \cdot r^{d-1}\, \textnormal{d}r \lesssim |\tau|^{-\beta/\alpha} \cdot \|\widetilde{\delta}\|^{-d - \alpha} \cdot \widetilde{\tau}^{\, d - \beta}\\
& = |\tau|^{d+1-\beta} \cdot \|\delta\|^{-d-\alpha}  \leq \|\delta\|^{\alpha(d+1-\beta)} \cdot \|\delta\|^{-d-\alpha} = \|\delta\|^{(\alpha-1) d - \alpha\beta}.
\end{align*}

Finally, by using (\ref{eq:p_tail_estimates}) we get
\begin{align*}
I_{2} & = |\tau|^{-\beta/\alpha}  \int_{\{\|x\| \geq \|\widetilde{\delta}\|/2,\, \| \mathrm{ sign }(\tau) \cdot x + \widetilde{\delta}\| \leq |\widetilde{\tau}|\}} |\widetilde{\tau}|^{-\beta} \cdot p(x)\, \textnormal{d}x\\
& \lesssim |\tau|^{-\beta/\alpha} \cdot |\widetilde{\tau}|^{-\beta} \int_{\| x + \mathrm{ sign }(\tau) \cdot \widetilde{\delta}\| \leq |\widetilde{\tau}|\}} \|x\|^{-d-\alpha} \, \textnormal{d}x \\
& \lesssim  |\tau|^{-\beta/\alpha} \cdot |\widetilde{\tau}|^{-\beta} \cdot \|\widetilde{\delta}\|^{-d-\alpha} \cdot |\widetilde{\tau}|^d\\
& = |\tau|^{d-\beta + 1} \cdot \|\delta\|^{-d-\alpha}
\end{align*}
using the volume of a ball with radius $|\widetilde{\tau}|$ and center $- \mathrm{ sign }(\tau) \cdot \widetilde{\delta}$. Now, $\alpha \in (0,1]$, $\beta < d$ and $|\tau| \leq \|\delta\|$ result in
\begin{align*}
I_{2} & \lesssim |\tau|^{d-\beta + 1} \cdot \|\delta\|^{-d-\alpha} \leq \|\delta\|^{1 - \alpha - \beta} \leq \|\delta\|^{-\beta}.
\end{align*}
If $\alpha \in [1,2)$, $\beta \leq d$ and $|\tau| \leq \|\delta\|^\alpha$ one has
\begin{align*}
I_{2} & \lesssim |\tau|^{d-\beta + 1} \cdot \|\delta\|^{-d-\alpha} \leq \|\delta\|^{(\alpha-1) \cdot d - \alpha\beta} \leq \|\delta\|^{-\beta}.
\end{align*}
If $\alpha \in [1,2)$, $\beta \geq d$ and $|\tau| \leq \|\delta\|^\alpha$ one has
\begin{align*}
I_{2} & \lesssim \|\delta\|^{(\alpha-1) \cdot d - \alpha\beta}\quad\text{ and }\quad \|\delta\|^{-\beta}\leq \|\delta\|^{(\alpha-1) \cdot d - \alpha\beta}.
\end{align*}
Altogether, we have shown
\begin{equation*}
\E\big[\|(\tau, \mathrm{ sign }(\tau) \cdot X_{|\tau|}+\delta)\|^{-\beta}\big] \lesssim \|\delta\|^{-\beta} + I_{1} + I_{2} \lesssim \|\delta\|^{-\beta} + I_{3}+I_{2}
\end{equation*}
and our upper bounds for $I_{2}$ and $I_{3}$ directly yield \eqref{eq:Kernel_delta}.
\end{proof}

Now, we are able to calculate the lower bound via energy estimates.

\begin{theorem}\label{le:EnergyEstimates}
Let $T\subseteq \R_+$ be a Borel set and $\alpha \in (0,2)$. Let $X=(X_t)_{t\geq0}$ be an isotropic $\alpha$-stable L\'{e}vy process in $\Rd$ and $f:T\rightarrow\{y\in\Rd:\|y-x\|\leq\frac12\}$ for fixed $x \in \Rd$ be a Borel measurable function.  Let $\ph_\alpha = \mathcal{P}^\alpha\m\dim \pazocal{G}_T(f)$. Then one $\Pb$-almost surely has
\begin{equation}\label{eq:lower_bound}
\dim \pazocal{G}_T(X+f) \geq
\begin{cases}
\ph_1, & \alpha \in (0,1],\\
\ph_\alpha \ \wedge \ \left(\frac{1}{\alpha} \cdot \ph_\alpha + \big(1-\frac{1}{\alpha}\big)\cdot d\right), & \alpha \in [1,2].
\end{cases}
\end{equation}
\end{theorem}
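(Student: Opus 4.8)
The plan is to apply the potential-theoretic criterion of Lemma~\ref{le:converted_kernel}. Fix $\beta$ strictly below the value claimed on the right-hand side of~\eqref{eq:lower_bound}; I will construct a probability measure $\mu\in\pazocal{M}^1(\pazocal{G}_T(f))$ with $\pazocal{E}_{K^\beta}(\mu)<\infty$, which by Lemma~\ref{le:converted_kernel} gives $\dim\pazocal{G}_T(X+f)\ge\beta$ $\Pb$-almost surely, and then let $\beta$ increase to the claimed bound. (When the bound is $0$ there is nothing to prove, and $\ph_\alpha<\infty$ by Theorem~\ref{le:General_Upper_Bound_Hausdorff}.) Since $f$ takes values in a ball of radius $\tfrac12$, we have $\|f(t)-f(s)\|\le1$ throughout, so the estimates of Lemma~\ref{le:kernel_estimate} apply on the part $|t-s|\le1$ of the integration domain, while on $|t-s|>1$ the trivial bound $K^\beta(t-s,f(t)-f(s))\le|t-s|^{-\beta}\le1$ shows that this part contributes at most $(\mu\times\mu)(\R^{1+d}\times\R^{1+d})=1$ to the energy. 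Hence it suffices to estimate the double integral over $\{|t-s|\le1\}$.

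For $\alpha\in(0,1]$ the claimed bound is $\ph_1=\dim\pazocal{G}_T(f)$. Here I combine the relevant lines of~\eqref{eq:Kernel_tau} and~\eqref{eq:Kernel_delta}: for $|t-s|\le1$, if $|t-s|\le\|f(t)-f(s)\|$ then $K^\beta\lesssim\|f(t)-f(s)\|^{-\beta}$, and otherwise $K^\beta\lesssim|t-s|^{-\beta}$; in either case $K^\beta(t-s,f(t)-f(s))\lesssim\|(t-s,f(t)-f(s))\|^{-\beta}$, the norm being the Euclidean norm on $\R^{1+d}$. Consequently $\pazocal{E}_{K^\beta}(\mu)\lesssim1+\pazocal{E}_\beta(\mu)$, the ordinary Riesz $\beta$-energy of $\mu$ viewed as a measure on $\pazocal{G}_T(f)\subseteq\R^{1+d}$. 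Since $\beta<\dim\pazocal{G}_T(f)$, Lemma~\ref{th:FrostmansLemma} with $\alpha=1$ yields $\mu\in\pazocal{M}^1(\pazocal{G}_T(f))$ and some $\beta'\in(\beta,\ph_1)$ with $\mu$-mass $\lesssim c^{\beta'}$ on every cube of side $c$, and a standard dyadic decomposition gives $\pazocal{E}_\beta(\mu)\lesssim1+\sum_{j\ge0}2^{j\beta}2^{-j\beta'}<\infty$, proving the lower bound $\ph_1$ in this range.

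For $\alpha\in[1,2)$, the substantive case, pick $\beta'\in(\beta,\ph_\alpha)$ and take the measure $\mu$ from Lemma~\ref{th:FrostmansLemma}, so that $\mu([t,t+c]\times\prod_{i=1}^d[x_i,x_i+c^{1/\alpha}])\lesssim c^{\beta'/\alpha}$. Introduce the parabolic gauge $\rho(t,s):=\max\{|t-s|,\ \|f(t)-f(s)\|^\alpha\}$; note $\rho(t,s)\le1$ whenever $|t-s|\le1$. Fixing $s$, split the $t$-integration into the time-dominated region $\mathsf A=\{|t-s|\ge\|f(t)-f(s)\|^\alpha\}$, where $\rho\asymp|t-s|$, and the space-dominated region $\mathsf B=\{|t-s|<\|f(t)-f(s)\|^\alpha\}$, where $\rho\asymp\|f(t)-f(s)\|^\alpha$. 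On $\mathsf A$ use~\eqref{eq:Kernel_tau}, and on $\mathsf B$ use~\eqref{eq:Kernel_delta} (the hypothesis $|t-s|\le\|f(t)-f(s)\|^\alpha$ there being the definition of $\mathsf B$). The key bookkeeping is that both regimes yield the same power of $\rho$: for $\beta<d$, $K^\beta\lesssim\rho^{-\beta/\alpha}$ on both regions, and for $\beta>d$, $K^\beta\lesssim\rho^{(1-1/\alpha)d-\beta}$ on both regions --- on $\mathsf B$ one substitutes $\|f(t)-f(s)\|\asymp\rho^{1/\alpha}$ and uses $\tfrac{\alpha-1}{\alpha}d-\beta=(1-\tfrac1\alpha)d-\beta$. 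A set of the form $\{(t,f(t)):\rho(t,s)\le c\}$ lies in a box $[s-c,s+c]\times B(f(s),c^{1/\alpha})$, hence is covered by $O(1)$ cylinders from $\mathcal{P}^\alpha$ and so has $\mu$-mass $\lesssim c^{\beta'/\alpha}$; summing the kernel bound over the dyadic shells $\{(t,f(t)):\rho(t,s)\in[2^{-j-1},2^{-j}]\}$ and integrating out $s$ yields, for $\beta<d$,
\begin{equation*}
\pazocal{E}_{K^\beta}(\mu)\ \lesssim\ 1+\sum_{j\ge0}2^{j\beta/\alpha}\,2^{-j\beta'/\alpha}\ <\ \infty\qquad(\text{since }\beta<\beta'),
\end{equation*}
so $\dim\pazocal{G}_T(X+f)\ge\beta$ for every $\beta<\ph_\alpha\wedge d$, and for $\beta>d$,
\begin{equation*}
\pazocal{E}_{K^\beta}(\mu)\ \lesssim\ 1+\sum_{j\ge0}2^{j(\beta-(1-1/\alpha)d)}\,2^{-j\beta'/\alpha}\ <\ \infty\qquad\Big(\text{if }\beta<\tfrac{\beta'}{\alpha}+\big(1-\tfrac1\alpha\big)d\Big),
\end{equation*}
so $\dim\pazocal{G}_T(X+f)\ge\beta$ for every $\beta\in\big(d,\ \tfrac1\alpha\ph_\alpha+(1-\tfrac1\alpha)d\big)$. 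Since $\ph_\alpha\le d$ is equivalent to $\ph_\alpha\le\tfrac1\alpha\ph_\alpha+(1-\tfrac1\alpha)d$, these two ranges of admissible $\beta$ combine to give precisely the lower bound $\ph_\alpha\wedge\big(\tfrac1\alpha\ph_\alpha+(1-\tfrac1\alpha)d\big)$.

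The routine ingredients are the Frostman construction, the covering of parabolic balls, and the geometric-series estimates. The analytic core --- the behaviour of $K^\beta$ on the two competing scales --- is already isolated in Lemma~\ref{le:kernel_estimate}; the real point of this proof is that the time-scale estimate~\eqref{eq:Kernel_tau} and the space-scale estimate~\eqref{eq:Kernel_delta} collapse to one and the same power of the parabolic gauge $\rho$, so that the energy series converges exactly up to, and not beyond, the asserted threshold. The main thing to be careful about is keeping the dichotomy $\beta<d$ versus $\beta>d$ synchronized with the regions $\mathsf A,\mathsf B$ and with the two shapes of the Frostman estimate; the borderline $\beta=d$ is simply excluded.
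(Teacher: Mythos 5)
Your proposal is correct and follows essentially the same route as the paper: Lemma \ref{le:converted_kernel} to transfer $X$ into the kernel, the parabolic Frostman measure of Theorem \ref{th:FrostmansLemma}, the bounds of Lemma \ref{le:kernel_estimate} split according to whether time or space dominates and whether $\beta<d$ or $\beta>d$, and a convergent geometric series. The only difference is presentational: you organise the dyadic estimate as annuli of the gauge $\rho(t,s)=\max\{|t-s|,\|f(t)-f(s)\|^{\alpha}\}$ around each point (and, for $\alpha\in(0,1]$, reduce to the ordinary Riesz energy via $K^{\beta}\lesssim\|(\tau,\delta)\|^{-\beta}$), whereas the paper tiles $\R_{+}\times\Rd$ by cubes respectively parabolic cylinders of scale $2^{-k}$ and counts related pairs --- the two bookkeeping schemes produce the same series.
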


\begin{proof}
We define the increments $\tau := t-s$ and $\delta := f(t)-f(s)$ with $\|\delta\| \in [0,1]$ and consider the difference kernel $K^\beta(t,x) = \E \big[\|(t, \mathrm{ sign }(t) \cdot X_{|t|} + x)\|^{-\beta}\big]$. We prove that $\pazocal{E}_{K^\beta}(\mu)<\infty$ holds for $\mu \in \pazocal{M}^1(\pazocal{G}_T(f))$ from the parabolic version of Frostman's lemma in Theorem \ref{th:FrostmansLemma} and for every $\beta$ less than the right-hand side of \eqref{eq:lower_bound}.
Then the claim follows due to Lemma \ref{le:converted_kernel}.
For the energy integral we have
\begin{equation}\label{energy-integral}\begin{split}
& \pazocal{E}_{K^\beta}(\mu) = \int \int_{\pazocal{G}_T(f)\times \pazocal{G}_T(f)}  K^\beta(t-s,f(t)-f(s))\, \textnormal{d}\mu (s,x)\, \textnormal{d}\mu (t,y)\\
& \quad \leq \int \int_{\{|t-s| \in (0,1] \}} K^\beta(\tau,\delta)\, \textnormal{d}\mu \, \textnormal{d}\mu + \int \int_{\{|t-s| \in (1, \infty) \}} |t-s|^{-\beta}\, \textnormal{d}\mu \, \textnormal{d}\mu\\
& \quad \lesssim \int \int_{\{|\tau| \in (0,1]\}} \E \big[\|(\tau, \mathrm{ sign }(\tau) \cdot X_{|\tau|}(\omega) + \delta)\|^{-\beta}\big]\, \textnormal{d}\mu \, \textnormal{d}\mu.
\end{split}\end{equation}

\emph{(i)}
We begin with the case $\alpha \in (0,1]$ and $\beta = \ph_1 -2 \eps $ for some arbitrary $\eps > 0$.  Due to Lemma \ref{le:kernel_estimate} we have
\begin{align*}
\pazocal{E}_{K^\beta}(\mu) 
\lesssim&\ \underbrace{\int \int_{\{|\tau| \in (0, 1], \,  \|\delta\| \in [0,\, |\tau| ] \}} |\tau|^{-\beta}\, \textnormal{d}\mu \, \textnormal{d}\mu}_{=:\ I_1} + \underbrace{\int \int_{\{|\tau| \in (0,\, 1], \,  \|\delta\| \in ( |\tau|,\, 1]\}} \|\delta\|^{-\beta}\, \textnormal{d}\mu \, \textnormal{d}\mu}_{=:\ I_2}.
\end{align*}
We get
\begin{align*}
I_1
\lesssim&\ \sum_{k=1}^\infty  2^{k\beta} \cdot \mu\otimes \mu\,\big{\{ |\tau| \in \big(2^{-k},\, 2 \cdot 2^{-k}\big], \,  \|\delta\| \in \big[0,2 \cdot 2^{-k}\big]\big\}}.
\end{align*}
Further, 
\begin{align*}
I_2 \lesssim&\ \sum_{k=1}^\infty 2^{k\beta} \cdot \mu\otimes \mu \, \big{\{ |\tau| \in \big(0,2\cdot 2^{-k}\big],\  \|\delta\| \in \big(2^{-k},  \, 2\cdot 2^{-k}\big]\big\}}.
\end{align*}

Now we have to calculate the expressions $\mu \otimes \mu \{\cdot\}$ for $I_1$ and $I_2$. For each $k\in\N$ we tile $\R_{+}\times\Rd$ by disjoint hypercubes of size $2^{-k} \times \cdots \times 2^{-k}$ and denote the collection of such hypercubes by $\pazocal{D}_k$. For every $c \in (0,1]$, $\gamma  = \ph_1 - \eps$ and $\alpha \in (0,1]$ Frostman's lemma \ref{th:FrostmansLemma} yields
\begin{equation*}
\mu\Bigg([t,\, t+c]\times \prod_{i=1}^d\, \big[x_i,\, x_i+c\big]\Bigg) \lesssim 
c^{\gamma},
\end{equation*}
in particular we have $\mu(Q')\lesssim 2^{-k\gamma}$ for each $Q'\in\pazocal{D}_k$. In order to estimate $I_1$ we define the following relation on $\pazocal{D}_k$. For two hypercubes $Q,Q'\in\pazocal{D}_k$ we write $Q\sim Q'$ if there exists $(s,x)\in Q$ and $(t,y) \in Q'$ such that $|\tau|=|t-s| \in \big(2^{-k},\, 2 \cdot 2^{-k}\big]$ and $\|\delta\|=\|y-x\| \in \big[0,\, 2 \cdot 2^{-k}\big]$. Thus
\begin{align*}
I_1 \lesssim &\ \sum_{k=1}^\infty  2^{k\beta} \sum_{\substack{Q,Q' \in \pazocal{D}_k \\ Q \sim Q'}} \mu\otimes \mu\, (Q \times Q') = \sum_{k=1}^\infty  2^{k\beta} \sum_{\substack{Q,Q' \in \pazocal{D}_k \\ Q \sim Q'}} \mu(Q)\cdot \mu(Q').
\end{align*}
The number of hypercubes related to a fixed $Q\in\pazocal{D}_{k}$ via $\sim$ is bounded by a universal constant not depending on $k$ and $Q$, hence
\begin{align*}
I_1 & \lesssim \sum_{k=1}^\infty  2^{k\beta} \sum_{Q\in\pazocal{D}_k} \sum_{Q \sim Q'} \mu(Q)\cdot \mu(Q')\lesssim \sum_{k=1}^\infty  2^{k(\beta-\gamma)} \cdot \sum_{Q\in\pazocal{D}_k} \mu(Q).
\end{align*}
Note that $\sum_{Q\in \pazocal{D}_k}\mu(Q)=\mu\big(\bigcup_{Q\in \pazocal{D}_k}Q\big) =\mu\big(\R_{+}\times\Rd\big) =1$ and we conclude
\begin{align*}
I_1 \lesssim&\ \sum_{k=1}^\infty  2^{k(\beta-\gamma)} = \sum_{k=1}^\infty 2^{-k\eps} < \infty,
\end{align*} 
since $\beta = \ph_1 -2 \eps$ and $\gamma = \ph_1 - \eps$. For the estimation of $I_2$ we define another relation on $\pazocal{D}_k$. For two hypercubes $Q,Q'\in\pazocal{D}_k$ we write $Q \approx Q'$ if there exists $(s,x)\in Q$ and $(t,y) \in Q'$ such that $|\tau| \in \big(0,\, 2 \cdot 2^{-k}\big]$ and $\|\delta\| \in \big(2^{-k},  \, 2 \cdot 2^{-k}\big]$. Thus
\begin{align*}
I_2 \lesssim \sum_{k=1}^\infty  2^{k\beta} \sum_{\substack{Q,Q' \in \pazocal{D}_k \\ Q \approx Q'}} \mu(Q)\cdot \mu(Q').
\end{align*}
Again, the number of hypercubes related to some fixed $Q$ via $\approx$ is bounded by a universal constant.  Hence the same calculation as for $I_1$ yields
\begin{align*}
I_2 \lesssim \sum_{k=1}^\infty  2^{k(\beta-\gamma)} = \sum_{k=1}^\infty 2^{-k\eps} < \infty.\\
\end{align*} 

\emph{(ii)} Now we treat the case $\alpha \in [1,2)$ and $\ph_{\alpha}\leq d$. Let $\beta = \ph_\alpha - 2 \alpha \cdot \eps < d$ for some arbitrary $\eps > 0$. Due to Lemma \ref{le:kernel_estimate} we have
\begin{equation*}
\pazocal{E}_{K^\beta}(\mu) 
\lesssim  \underbrace{\int \int_{\{|\tau| \in (0,\, 1], \,  \|\delta\| \in [0,\, |\tau|^{1/\alpha} ] \}} |\tau|^{-\beta/\alpha}\, \textnormal{d}\mu \, \textnormal{d}\mu}_{=:\ I_3}
 + \underbrace{\int \int_{\{|\tau| \in (0,\, 1], \,  \|\delta\| \in ( |\tau|^{1/\alpha},\, 1]\}} \|\delta\|^{- \beta}\, \textnormal{d}\mu \, \textnormal{d}\mu}_{=:\ I_4}.
\end{equation*}
We get
\begin{align*}
I_3
\lesssim&\ \sum_{k=1}^\infty  2^{k\beta/\alpha} \cdot \mu\otimes \mu\, \big\{ |\tau| \in \big(2^{-k},\, 2 \cdot 2^{-k}\big], \,  \|\delta\| \in \big[0,2^{1/\alpha} \cdot 2^{-k / \alpha}\big]\big\}.
\end{align*}
Further, 
\begin{align*}
I_4 \lesssim&\ \sum_{k=1}^\infty 2^{k\beta/\alpha}\cdot \mu\otimes \mu\, \big\{ |\tau| \in \big(0,\, 2 \cdot 2^{-k}\big],\  \|\delta\| \in \big(2^{-k/\alpha},  \, 2^{1/\alpha} \cdot 2^{-k/\alpha}\big]\big\}.
\end{align*}

Now we have to calculate the expressions $\mu \otimes \mu \{\cdot\}$ for $I_3$ and $I_4$. For each $k\in\N$ we tile $\R_{+}\times\Rd$ by disjoint $\alpha$-parabolic cylinders of size $2^{-k} \times  2^{-k/\alpha} \times \cdots \times 2^{-k/\alpha}$ and again denote the collection of such cylinders by $\pazocal{D}_k$. For every $c \in (0,1]$, $\gamma = \ph_\alpha - \alpha \cdot \eps$ and $\alpha \in [1,2)$ Frostman's lemma \ref{th:FrostmansLemma} yields
\begin{equation*}\label{eq:Frostmans_Lemma_alpha_0,1}
\mu\Bigg([t,\, t+c]\times \prod_{i=1}^d\, \big[x_i,\, x_i+c^{1/\alpha}\big]\Bigg) \lesssim 
c^{\gamma/\alpha},
\end{equation*} 
in particular we have $\mu(Q')\lesssim 2^{-k\gamma/\alpha}$ for each $Q'\in\pazocal{D}_k$.
The same technique as in \textit{(i)} results in
\begin{align*}
I_3,I_4 \lesssim&\ \sum_{k=1}^\infty  2^{k(\beta - \gamma)/\alpha} \leq \sum_{k=1}^\infty 2^{-k\eps} < \infty,
\end{align*} 
since $\beta = \ph_\alpha - 2\alpha \cdot \eps$ and $\gamma = \ph_\alpha - \alpha \cdot \eps$.\\

\emph{(iii)} Finally, we treat the case $\alpha \in [1,2)$ and $\ph_{\alpha}>d$. Let $\beta = (1-\frac1{\alpha}) \cdot d + \frac1{\alpha}\cdot\ph_\alpha - 2\eps > d$ for for sufficiently small $\eps > 0$.  Due to Lemma \ref{le:kernel_estimate} we have

\begin{align*}
\pazocal{E}_{K^\beta}(\mu) 
& \lesssim \int \int_{\{|\tau| \in (0,\, 1], \, \|\delta\| \in [0,\, |\tau|^{1/\alpha} ] \}} |\tau|^{(1-1/\alpha) d - \beta},\, \textnormal{d}\mu \, \textnormal{d}\mu\\
& \quad + \int \int_{\{|\tau| \in (0,\, 1], \, \|\delta\| \in ( |\tau|^{1/\alpha},\, 1]\}} \|\delta\|^{(\alpha-1) d-\alpha\beta}\, \textnormal{d}\mu \, \textnormal{d}\mu
\end{align*}
and the same techniques as in \emph{(ii)} yield the finiteness of this expression when choosing $\gamma=\ph_{\alpha}-\varepsilon$ in Frostman's lemma \ref{th:FrostmansLemma}.
\end{proof}

\section{Range: Upper and Lower Bounds}

We give upper and lower bounds for the Hausdorff dimension of the range of a stable L\'{e}vy process with drift.

\begin{theorem}\label{thm:Range_dim_X+f_leq_dim_f}
Let $T\subseteq \R_+$ be any set and $\alpha \in (0,2]$. Let $X=(X_t)_{t\geq0}$ be an isotropic $\alpha$-stable L\'{e}vy process in $\Rd$ and $f:T\rightarrow \Rd$ be any function. Define $\ph_\alpha := \mathcal{P}^\alpha\m\dim \pazocal{G}_T(f)$.  Then one $\Pb$-almost surely has
\begin{equation}
\dim \pazocal{R}_T(X+f) \leq
\begin{cases}
\left(\alpha \cdot \ph_\alpha\right) \ \wedge \ d, & \alpha \in (0,1],\\
\ph_\alpha \ \wedge \ d , & \alpha \in [1,2].
\end{cases}
\end{equation}
\end{theorem}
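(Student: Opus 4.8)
The plan is to reuse the covering construction from the proof of Theorem~\ref{thm:dim_X+f_leq_dim_f} and simply project the resulting random cover of $\pazocal{G}_T(X+f)$ onto the space coordinate. Since $\pazocal{R}_T(X+f)\subseteq\Rd$ we automatically have $\dim\pazocal{R}_T(X+f)\leq d$, so it suffices to prove that $\Pb$-almost surely $\dim\pazocal{R}_T(X+f)\leq\alpha\cdot\ph_\alpha$ when $\alpha\in(0,1]$ and $\dim\pazocal{R}_T(X+f)\leq\ph_\alpha$ when $\alpha\in[1,2]$; for $\alpha=2$ this may alternatively be read off from Theorem~1.2 in \cite{PS16}. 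Fix a real number $\beta$ strictly larger than $\alpha\cdot\ph_\alpha$ (if $\alpha\in(0,1]$), resp.\ than $\ph_\alpha$ (if $\alpha\in[1,2]$), and keep auxiliary constants $\eps,\delta,\delta'>0$ to be pinned down at the end.

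First I would cover $\pazocal{G}_T(f)$ efficiently by $\alpha$-parabolic cylinders. Since $\ph_\alpha=\mathcal{P}^\alpha\m\dim\pazocal{G}_T(f)<\ph_\alpha+\delta$, we have $\mathcal{P}^\alpha\m\pazocal{H}^{\ph_\alpha+\delta}(\pazocal{G}_T(f))=0$, so for every $\eps>0$ and with mesh as small as we wish there is a cover
\[
(\mathsf{P}_k)_{k\in\N}=\Bigg(\big[t_k,\,t_k+c_k\big]\times\prod_{i=1}^d\Big[x_{i,k},\,x_{i,k}+c_k^{1/\alpha}\Big]\Bigg)_{k\in\N}\subseteq\mathcal{P}^\alpha
\]
with $\sum_{k}|\mathsf{P}_k|^{\ph_\alpha+\delta}\leq\eps$. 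Next, exactly as in the proof of Theorem~\ref{thm:dim_X+f_leq_dim_f}, I let $M_k(\omega)$ be the number of boxes of a fixed $2^d$-nested collection of hypercubes of sidelength $c_k^{1/\alpha}$ that the path $t\mapsto X_t(\omega)$ visits during $[t_k,\,t_k+c_k]$; Pruitt and Taylor's covering Lemma~6.1 in \cite{PT69} together with Lemma~3.4 in \cite{MX05} then gives $\E[M_k]\lesssim c_k^{-\delta'/\alpha}$ for every $\delta'>0$. The one genuinely new step is geometric: if $(t,f(t))\in\pazocal{G}_T(f)\cap\mathsf{P}_k$, then $f(t)$ stays in a hypercube of sidelength $c_k^{1/\alpha}$ while $X_t(\omega)$ lies in one of the $M_k(\omega)$ visited hypercubes of sidelength $c_k^{1/\alpha}$, so $X_t(\omega)+f(t)$ lies in one of at most $M_k(\omega)$ hypercubes of sidelength $2c_k^{1/\alpha}$. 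Hence $\pazocal{R}_T(X(\omega)+f)$ is covered by at most $\sum_k M_k(\omega)$ sets of diameter $\lesssim c_k^{1/\alpha}$; crucially the diameter is $c_k^{1/\alpha}$ and not $c_k$, because discarding the time interval $[t_k,\,t_k+c_k]$ of length $c_k$ removes the long side of $\mathsf{P}_k$ in the regime $\alpha<1$.

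Taking expectations then yields
\[
\E\big[\pazocal{H}^\beta(\pazocal{R}_T(X+f))\big]\ \lesssim\ \sum_{k}\E[M_k]\cdot c_k^{\,\beta/\alpha}\ \lesssim\ \sum_{k}c_k^{(\beta-\delta')/\alpha},
\]
and it remains to dominate this by $\eps$ using $\sum_k|\mathsf{P}_k|^{\ph_\alpha+\delta}\leq\eps$. For $\alpha\in(0,1]$ one has $c_k^{1/\alpha}\leq c_k$, so $|\mathsf{P}_k|\asymp c_k$, and this succeeds as soon as $(\beta-\delta')/\alpha\geq\ph_\alpha+\delta$, which holds for $\delta,\delta'$ small since $\beta>\alpha\cdot\ph_\alpha$. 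For $\alpha\in[1,2]$ one has $c_k^{1/\alpha}\geq c_k$, so $|\mathsf{P}_k|\asymp c_k^{1/\alpha}$, and it succeeds as soon as $\beta\geq\ph_\alpha+\delta+\delta'$, which holds for $\delta,\delta'$ small since $\beta>\ph_\alpha$. In either case $\E[\pazocal{H}^\beta(\pazocal{R}_T(X+f))]\lesssim\eps$; letting $\eps\downarrow0$ forces $\pazocal{H}^\beta(\pazocal{R}_T(X+f))=0$ $\Pb$-almost surely, hence $\dim\pazocal{R}_T(X+f)\leq\beta$ $\Pb$-almost surely, and since such $\beta$ were arbitrary, combining with $\dim\pazocal{R}_T(X+f)\leq d$ finishes the proof.

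I do not expect a serious obstacle: the argument is a direct adaptation of Theorem~\ref{thm:dim_X+f_leq_dim_f}, and the only subtlety is to start from an $\alpha$-parabolic cylinder cover of $\pazocal{G}_T(f)$ rather than a hypercube cover. A naive projection of the hypercube cover used for the graph would give only $\dim\pazocal{R}_T(X+f)\leq\ph_1$, which for $\alpha<1$ is weaker; it is precisely the parabolic cylinders, whose space-projection has the small diameter $c_k^{1/\alpha}$, that produce the sharp extra factor $\alpha$ in the range formula. Everything else is the elementary exponent bookkeeping displayed above, with no probabilistic ingredient beyond the Pruitt--Taylor covering lemma already invoked in Section~4.
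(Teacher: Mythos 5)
Your proposal is correct and, for the substantive case $\alpha\in(0,1]$, essentially identical to the paper's proof: an efficient $\alpha$-parabolic cover of $\pazocal{G}_T(f)$, the Pruitt--Taylor/sojourn-time bound $\E[M_k]\lesssim c_k^{-\delta'/\alpha}$, and projection of the resulting random parabolic cover onto the space coordinate to get sets of diameter $\asymp c_k^{1/\alpha}$, followed by the same exponent bookkeeping. The only divergence is that for $\alpha\in[1,2]$ the paper does not rerun the covering argument at all but simply chains $\dim \pazocal{R}_T(X+f)\leq\dim \pazocal{G}_T(X+f)\leq\mathcal{P}^\alpha\m\dim \pazocal{G}_T(X+f)\leq\ph_\alpha$ via Theorem \ref{le:General_Upper_Bound_Hausdorff} and Theorem \ref{thm:dim_X+f_leq_dim_f}, whereas you repeat the projection argument in that regime too; both routes are valid.
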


\begin{proof}
The Gaussian case follows from the proof of Theorem 1.2 in \cite{PS16} and Proposition \ref{pro:P2=2P}. 
Since the Hausdorff dimension of the range never exceeds the topological dimension of the space a function maps to we always have $\dim \pazocal{R}_T(X+f) \leq d$.
In case of $\alpha \in [1,2)$ the claim directly follows from Theorem \ref{le:General_Upper_Bound_Hausdorff} and Theorem \ref{thm:dim_X+f_leq_dim_f} which yield
\begin{equation*}
\dim \pazocal{R}_T(X+f) \leq \dim \pazocal{G}_T(X+f) \leq \mathcal{P}^\alpha\m\dim \pazocal{G}_T(X+f) \leq \mathcal{P}^\alpha\m\dim \pazocal{G}_T(f)=\ph_{\alpha}.
\end{equation*}

Now, let $\alpha \in (0,1]$, $\beta = \alpha \cdot \ph_\alpha$ and let $\delta,\, \eps>0$ be arbitrary. Then $\pazocal{G}_T(f)$ can be covered by $\alpha$-parabolic cylinders
\begin{equation*}
(\mathsf{P}_{k})_{k \in \N} = \Bigg([t_k,\, t_k + c_k ] \times \prod_{i=1}^d\, \big[x_{i,k},\, x_{i,k}+c_k^{1/\alpha} \big] \Bigg)_{k\in\N} \subseteq \mathcal{P}^\alpha 
\end{equation*}
such that $\sum_{k=1}^\infty |\mathsf{P}_{k}|^{(\beta + \delta)/\alpha} \lesssim \sum_{k=1}^\infty c_k^{(\beta + \delta) /\alpha} \leq \eps$. Let $M_k(\omega)$ be the random number of a fixed $2^d$-nested collection of hypercubes with sidelength $c_k^{1/\alpha}$ that the path $t\mapsto X_t(\omega)$ hits at some time $t\in[t_k,\, t_k+c_k]$. As in the proof of Theorem \ref{thm:dim_X+f_leq_dim_f} we obtain a random parabolic cover $\bigcup_{k\in\N}\ \widetilde{\mathsf{P}}_{k}(\omega) \supseteq \pazocal{G}_T(X(\omega)+f)$ where
\begin{align*}
\widetilde{\mathsf{P}}_{k}(\omega) = [t_k,\, t_k + c_k] \times \bigcup_{j=1}^{M_k(\omega)} \prod_{i=1}^d & \left( \big[\xi_{i,j,k}(\omega)+x_{i,k},\, \xi_{i,j,k}(\omega)+x_{i,k}+c_k^{1/\alpha}\big]\right.\\
\cup\ & \left. \big[\xi_{i,j,k}(\omega)+x_{i,k}+c_k^{1/\alpha},\, \xi_{i,j,k}(\omega)+x_{i,k}+2c_k^{1/\alpha}\big]\right).
\end{align*}
By projection we get the random cover
$\bigcup_{k\in\N} \square_{k} \supseteq \pazocal{R}_T(X(\omega) + f)$ with
\begin{align*}
\square_{k}(\omega) = \bigcup_{k=1}^{M_k(\omega)} \prod_{i=1}^d & \left(\big[\xi_{i,j,k}(\omega)+x_{i,k},\xi_{i,j,k}(\omega)+x_{i,k}+c_k^{1/\alpha}\big]\right.\\
\cup\ &\left.\big[\xi_{i,j,k}(\omega)+x_{i,k}+c_k^{1/\alpha},\xi_{i,j,k}(\omega)+x_{i,k}+2c_k^{1/\alpha}\big]\right).
\end{align*}
This is a union of $M_k(\omega)\cdot 2^d$ hypercubes with diameter $|\square_{k}(\omega)|\asymp c_k^{1/\alpha}$. 
As in the proof of Theorem \ref{thm:dim_X+f_leq_dim_f} we get $\E[M_k] \lesssim c_k^{-\delta'/\alpha}$.
Hence we get for $\eps' = \delta + \delta' > 0$
\begin{align*}
& \E\Big[\pazocal{H}^{\beta+\eps'}(\pazocal{R}_T(X+f))\Big]
\leq \E\bigg[\sum_{k=1}^\infty \big|\square_{k}\big|^{\beta+\eps'} \bigg]
\lesssim \E\bigg[\sum_{k=1}^\infty M_k\cdot 2^d \cdot c_k^{(\beta+\eps')/\alpha}\bigg]\\
& \quad \lesssim \sum_{k=1}^\infty \E[M_k]\cdot c_k^{(\beta+\eps')/\alpha}
\lesssim \sum_{k=1}^\infty c_k^{(\beta+\eps'- \delta')/\alpha}
= \sum_{k=1}^\infty  c_k^{(\beta + \delta)/\alpha}
\leq \eps.
\end{align*}
Since $\eps, \eps' > 0$ are arbitrary, for all $\beta'>\beta$ we get
$\E\Big[\pazocal{H}^{\beta'}(\pazocal{R}_T(X+f))\Big] =0$,
hence
\begin{equation*}
\pazocal{H}^{\beta'}(\pazocal{R}_T(X+f)) =0\quad \Pb\text{-almost surely}.
\end{equation*}
Since $\beta'>\beta$ is also arbitrary, we finally get $\dim \pazocal{R}_T(X+f) \leq \beta = \alpha \cdot \ph_\alpha$ $\Pb$-almost surely, as claimed.
\end{proof}

The lower bound is obtained by the energy method. The stable process $X$ is transformed into the kernel of the energy integral.

\begin{lemma}\label{le:converted_kernel_2}
Let $T \subseteq\R_+$ be a Borel set and $\alpha\in (0,2)$. Let $X=(X_t)_{t\geq0}$ be an isotropic stable L\'{e}vy process in $\Rd$ and $f:\R_+\rightarrow\Rd$ be a Borel measurable function.  Define the difference kernel 
\begin{equation*}
\kappa^\beta(t,x) := \E \big[\|\mathrm{sign}(t) \cdot X_{|t|} + x\|^{-\beta}\big].
\end{equation*}
Then $\Ca_{\kappa^\beta}(\pazocal{G}_T(f)) > 0$ implies that $\Pb$-almost surely $\Ca_\beta(\pazocal{R}_T(X(\omega)+f)) > 0$ holds.
Hence $\pazocal{E}_{\kappa^\beta}(\mu)<\infty$ for some probability measure $\mu\in\pazocal{M}^1(\pazocal{G}_T(f))$ implies $\dim \pazocal{R}_T(X+f) \geq \beta$ $\,\Pb$-almost surely.
\end{lemma}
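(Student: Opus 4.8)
The plan is to mimic the proof of Lemma \ref{le:converted_kernel}, with the only change being that the range, rather than the graph, is the relevant Suslin set, so the time coordinate is dropped from the kernel. First I would note that for $\Pb$-a.e.\ $\omega$ the map $(t,f(t))\mapsto X_t(\omega)+f(t)$ is a (Borel measurable) surjection from $\pazocal{G}_T(f)$ onto $\pazocal{R}_T(X(\omega)+f)$, so the pushforward of any $\mu\in\pazocal{M}^1(\pazocal{G}_T(f))$ under this map produces a random probability measure $\varrho_\omega\in\pazocal{M}^1(\pazocal{R}_T(X(\omega)+f))$ (we only need a measure supported on the range, not a bijection, since we are bounding an energy from above). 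One subtlety worth a sentence: the range of a Borel function need not be Borel, but it is Suslin, which is exactly why Definition \ref{def:energy} and Frostman's theorem \ref{th:Frostman's_Theorem} were set up for Suslin sets; this is what allows $\Ca_\beta$ to be used on $\pazocal{R}_T(X+f)$.

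Next I would compute $\E[\pazocal{E}_\beta(\varrho_\omega)]$. By the definition of the pushforward,
\begin{align*}
\pazocal{E}_\beta(\varrho_\omega)
&= \int_{\pazocal{G}_T(f)}\int_{\pazocal{G}_T(f)} \big\| x+X_t(\omega) - (y+X_s(\omega)) \big\|^{-\beta}\, \textnormal{d}\mu(t,x)\, \textnormal{d}\mu(s,y).
\end{align*}
Taking expectations, using Tonelli's theorem (the integrand is nonnegative) to interchange $\E$ with the double integral, and then the stationarity of increments of $X$ to rewrite $X_t(\omega)-X_s(\omega)\eqd \mathrm{sign}(t-s)\cdot X_{|t-s|}$, gives
\begin{align*}
\E\big[\pazocal{E}_\beta(\varrho_\omega)\big]
&= \int_{\pazocal{G}_T(f)}\int_{\pazocal{G}_T(f)} \E\Big[\big\| \mathrm{sign}(t-s)\cdot X_{|t-s|} + x-y \big\|^{-\beta}\Big]\, \textnormal{d}\mu(t,x)\, \textnormal{d}\mu(s,y)\\
&= \int_{\pazocal{G}_T(f)}\int_{\pazocal{G}_T(f)} \kappa^\beta(t-s,\, x-y)\, \textnormal{d}\mu(t,x)\, \textnormal{d}\mu(s,y) = \pazocal{E}_{\kappa^\beta}(\mu).
\end{align*}
So the hypothesis $\pazocal{E}_{\kappa^\beta}(\mu)<\infty$ forces $\E[\pazocal{E}_\beta(\varrho_\omega)]<\infty$, hence $\pazocal{E}_\beta(\varrho_\omega)<\infty$ $\Pb$-almost surely, which in turn gives $\Ca_\beta(\pazocal{R}_T(X(\omega)+f))>0$ $\Pb$-a.s.; the final conclusion $\dim\pazocal{R}_T(X+f)\geq\beta$ then follows from Frostman's theorem \ref{th:Frostman's_Theorem}. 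The statement $\Ca_{\kappa^\beta}(\pazocal{G}_T(f))>0$ is exactly the assertion that such a $\mu$ with finite $\kappa^\beta$-energy exists, so the two formulations are equivalent.

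The only genuine obstacle is a measure-theoretic one: justifying that $\varrho_\omega$ is well defined and that $(\omega,B)\mapsto\varrho_\omega(B)$ is jointly measurable enough for Tonelli's theorem to apply to $\E[\pazocal{E}_\beta(\varrho_\omega)]$. This is handled exactly as in Lemma \ref{le:converted_kernel} — one works with the explicit representation $\varrho_\omega(B)=\mu(\{(t,x)\in\pazocal{G}_T(f):\,x+X_t(\omega)\in B\})$ and uses joint measurability of $(\omega,t)\mapsto X_t(\omega)$ (a measurable version of the process exists by stochastic continuity). Beyond this, everything is a verbatim adaptation of the graph case with the first coordinate suppressed, so I would simply write: ``This is proven analogously to Lemma \ref{le:converted_kernel}, replacing the graph by the range and the kernel $K^\beta$ by $\kappa^\beta$; note that the range of a Borel function is a Suslin set, so Frostman's theorem \ref{th:Frostman's_Theorem} still applies.''
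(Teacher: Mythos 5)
Your proposal is correct and follows essentially the same route as the paper: push $\mu$ forward to a random measure on the range (the paper factors this through the time projection $\pi_t$ and the map $(X(\omega)+f)^{-1}$, which yields the same measure as your direct pushforward), compute $\E[\pazocal{E}_\beta(\cdot)]=\pazocal{E}_{\kappa^\beta}(\mu)$ via Tonelli and stationarity of increments, and conclude with Frostman's theorem, noting that the range is Suslin. No gaps.
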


\begin{proof}
Let $\mu \in \pazocal{M}^1(\pazocal{G}_T(f))$ and $\pi_t$ denote the projection onto the time component, i.e. $\pi_t(t,f(t)) = t$. Define the probability measure $\nu \in \pazocal{M}^1(\R_+)$ as the pushforward measure $\nu(A) = \mu\big(\pi_t^{-1}(A)\big)$ for Borel sets $A\subseteq \R_{+}$ and further the random probability measure $\widetilde{\mu}_\omega(R)= \nu((X(\omega)+f)^{-1}(R))$ for every Borel set $R \subseteq \Rd$. Then Tonelli's theorem and the stationarity of the increments of $X$ yield
\begin{align*}
\E\big[\pazocal{E}_\beta (\widetilde{\mu}_\omega)\big] &
= \E\bigg[\int_{\pazocal{R}_T(X(\omega)+f)}  \int_{\pazocal{R}_T(X(\omega)+f)} \|x-y\|^{-\beta}\, \textnormal{d}\widetilde{\mu}_\omega(x)\, \textnormal{d}\widetilde{\mu}_\omega(y)\bigg]\\
&= \E\bigg[\int_{T} \int_{T} \|X_t(\omega) + f(t)-(X_s(\omega) + f(s))\|^{-\beta}\, \textnormal{d}\nu(t)\, \textnormal{d}\nu(s)\bigg]\\
&= \int_{\pazocal{G}_T(f)} \int_{\pazocal{G}_T(f)} \E\big[\|X_t(\omega) - X_s(\omega) +x- y\|^{-\beta}\big]\, \textnormal{d}\mu(t,x)\, \textnormal{d}\mu(s,y)\\
&= \int_{\pazocal{G}_T(f)} \int_{\pazocal{G}_T(f)} \E\big[\|\mathrm{ sign}(t-s) \cdot X_{|t-s|}(\omega) + x - y\|^{-\beta}\big]\, \textnormal{d}\mu(t,x)\, \textnormal{d}\mu(s,y)\\
&= \pazocal{E}_{\kappa^\beta} (\mu).
\end{align*}

By assumption, there exists $\mu \in \pazocal{M}^1(\pazocal{G}_T(f))$ such that $\pazocal{E}_{\kappa^\beta} (\mu)< \infty$ 
holds.  From that one $\Pb$-almost surely has $\pazocal{E}_\beta (\widetilde{\mu}_\omega) < \infty$ and the final statement immediately follows by Frostman's Theorem \ref{th:Frostman's_Theorem} since the range of a Borel set under a Borel measurable function is a Suslin set, see Section 11 in \cite{J03}.
\end{proof}

Next we provide estimates for the difference kernel $\kappa^\beta(\tau,\delta) = \E \big[\|\mathrm{sign}(t) \cdot X_{|t|}+ x\|^{-\beta}\big]$ from Lemma \ref{le:converted_kernel_2} that will give appropriate estimates of the energy integral.

\begin{lemma}\label{le:kernel_estimate_2}
Let $\alpha \in (0,2)$ and $X=(X_t)_{t \geq 0}$ be an isotropic $\alpha$-stable L\'{e}vy process in $\Rd$.  Let $\beta \in (0,d)$ and $\tau\in\R$, $\delta\in\Rd$ be such that $|\tau|\in(0,1]$, $\|\delta\| \in [0,1]$.  Then one has
\begin{equation*}
\kappa^\beta(\tau,\delta) \lesssim \begin{cases}
|\tau|^{-\beta/\alpha}\\
\|\delta\|^{-\beta} & \text{ for }|\tau| \leq \|\delta\|^\alpha.
\end{cases}
\end{equation*}
\end{lemma}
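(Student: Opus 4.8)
The statement is the analogue of Lemma \ref{le:kernel_estimate} for the kernel $\kappa^\beta$ which differs only by dropping the time coordinate from the norm, and in the restricted range $\beta\in(0,d)$. So the plan is to imitate the proof of Lemma \ref{le:kernel_estimate} but without the complications caused by the extra time variable — in particular we never need the cases $\beta>d$, which simplifies matters considerably. As before, let $p$ denote the density of $X_1\overset{\mathrm d}{=}|\tau|^{-1/\alpha}X_{|\tau|}$ and set $\widetilde\delta:=\delta/|\tau|^{1/\alpha}$. By self-similarity,
\begin{equation*}
\kappa^\beta(\tau,\delta)=\E\big[\|\mathrm{sign}(\tau)\cdot X_{|\tau|}+\delta\|^{-\beta}\big]
=|\tau|^{-\beta/\alpha}\int_\Rd \|x+\mathrm{sign}(\tau)\cdot\widetilde\delta\|^{-\beta}\,p(x)\,\mathrm dx.
\end{equation*}

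**First bound.** For the estimate $\kappa^\beta(\tau,\delta)\lesssim|\tau|^{-\beta/\alpha}$ I would apply Lemma \ref{le:density_monotocity} (in its purely spatial form; note $x\mapsto\|x\|^{-\beta}$ is rotationally symmetric and radially non-increasing, and so is $p$ by unimodality) to move the shift $\mathrm{sign}(\tau)\cdot\widetilde\delta$ off the kernel, obtaining
\begin{equation*}
\kappa^\beta(\tau,\delta)\lesssim |\tau|^{-\beta/\alpha}\int_\Rd\|x\|^{-\beta}\,p(x)\,\mathrm dx
=|\tau|^{-\beta/\alpha}\cdot\E\big[\|X_1\|^{-\beta}\big]\lesssim|\tau|^{-\beta/\alpha},
\end{equation*}
where finiteness of the negative moment of order $\beta<d$ is Lemma 3.1 in \cite{BMS}, exactly as in the proof of Lemma \ref{le:kernel_estimate}.

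**Second bound.** For $\kappa^\beta(\tau,\delta)\lesssim\|\delta\|^{-\beta}$ under $|\tau|\le\|\delta\|^\alpha$ (equivalently $|\widetilde\delta|\ge 1$) I would split the spatial integral over the two regions $\{\|x\|\le|\widetilde\delta|/2\}$ and its complement. On the first region the triangle inequality gives $\|x+\mathrm{sign}(\tau)\cdot\widetilde\delta\|\ge\tfrac12\|\widetilde\delta\|$, so that part of the integral contributes $\lesssim |\tau|^{-\beta/\alpha}\|\widetilde\delta\|^{-\beta}=\|\delta\|^{-\beta}$. On the complement $\{\|x\|\ge|\widetilde\delta|/2\}$ I would use the tail bound \eqref{eq:p_tail_estimates}, $p(x)\lesssim\|x\|^{-d-\alpha}\lesssim\|\widetilde\delta\|^{-d-\alpha}$, pull this constant out, and be left with $|\tau|^{-\beta/\alpha}\|\widetilde\delta\|^{-d-\alpha}\int_{\{\|x+\mathrm{sign}(\tau)\widetilde\delta\|\ge\text{something}\}}\|x+\mathrm{sign}(\tau)\widetilde\delta\|^{-\beta}\,\mathrm dx$; a further split according to whether $\|x+\mathrm{sign}(\tau)\cdot\widetilde\delta\|$ is below or above $\|\widetilde\delta\|$ reduces, via polar coordinates and $\beta<d$, to $\int_0^{\|\widetilde\delta\|}r^{d-\beta-1}\,\mathrm dr\asymp\|\widetilde\delta\|^{d-\beta}$ for the bounded piece and to $\|\widetilde\delta\|^{-\beta}$ directly for the piece where the argument exceeds $\|\widetilde\delta\|$. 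Collecting terms and using $|\widetilde\delta|=\|\delta\|/|\tau|^{1/\alpha}$ together with $|\tau|\le\|\delta\|^\alpha$ yields $\kappa^\beta(\tau,\delta)\lesssim\|\delta\|^{-\beta}$ after the same bookkeeping as in \eqref{kernel-estimste:2} and the lines following it. This region-splitting step, keeping track of which exponents of $|\tau|$ and $\|\delta\|$ survive, is the only delicate part; everything else is a routine specialisation of Lemma \ref{le:kernel_estimate}.
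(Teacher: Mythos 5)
Your proposal is correct and follows essentially the same route as the paper: self-similarity to rescale, Lemma \ref{le:density_monotocity} plus the negative-moment bound of order $\beta<d$ for the $|\tau|^{-\beta/\alpha}$ estimate, and for the second estimate a region split combined with the tail bound \eqref{eq:p_tail_estimates}, polar coordinates, and the hypothesis $|\tau|\le\|\delta\|^{\alpha}$. The only (cosmetic) difference is that you split on $\{\|x\|\le\|\widetilde\delta\|/2\}$ first and then subdivide the complement, as in the proof of Lemma \ref{le:kernel_estimate}, whereas the paper splits directly on $\{\|\mathrm{sign}(\tau)\cdot x+\widetilde\delta\|\le\|\widetilde\delta\|/2\}$, which collapses your two outer pieces into one; both yield the same exponents.
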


\begin{proof}
Let $p(x)$ denote the density function of $X_1 \eqd |\tau|^{-1/\alpha} X_{|\tau|}$. We define the rescaled increment $\widetilde{\delta} := \delta/|\tau|^{1/\alpha}$. The self-similarity of the stable L\'{e}vy process and Lemma \ref{le:density_monotocity} yield
\begin{align*} 
& \E\big[\|\mathrm{sign}(\tau) \cdot X_{|\tau|}+\delta\|^{-\beta}\big] = |\tau|^{-\beta/\alpha} \int_\Rd \|\mathrm{sign}(\tau) \cdot x+\widetilde{\delta}\|^{-\beta} \cdot p(x)\, \textnormal{d}x \\
& \quad= |\tau|^{-\beta/\alpha} \int_\Rd \|x+\mathrm{sign}(\tau) \cdot \widetilde{\delta}\|^{-\beta} \cdot p(x)\, \textnormal{d}x \\
& \quad \lesssim |\tau|^{-\beta/\alpha} \int_\Rd \|x\|^{-\beta} \cdot p(x) \, \textnormal{d}x \lesssim |\tau|^{-\beta / \alpha} \cdot \E\big[\|X_1\|^{-\beta}\big]
\lesssim |\tau|^{-\beta / \alpha},
\end{align*}
since negative moments of order $\beta<d$ exist; see Lemma 3.1 in \cite{BMS}. Now consider the region $\|\mathrm{sign}(\tau) \cdot x+\widetilde{\delta}\| \leq \|\widetilde{\delta}\|/2$ which yields
\begin{equation*}
\|x\| = \|\mathrm{sign}(\tau) \cdot x + \widetilde{\delta} - \widetilde{\delta}\| \geq \big\|| \mathrm{sign}(\tau) \cdot x + \widetilde{\delta}\| - \|\widetilde{\delta}\| \big|= \|\widetilde{\delta}\| - \|\mathrm{sign}(\tau) \cdot x + \widetilde{\delta}\| \geq \frac{1}{2} \cdot \|\widetilde{\delta}\|.
\end{equation*}
Thus $\beta < d$ and $\tau \leq \|\delta\|^\alpha$ lead to
\begin{align*}
& \E\big[\|\mathrm{sign}(\tau) \cdot X_{|\tau|} + \delta\|^{-\beta}\big] = |\tau|^{-\beta/\alpha} \int_\Rd \|\mathrm{sign}(\tau) \cdot x + \widetilde{\delta}\|^{-\beta} \cdot p(x)\, \textnormal{d}x\\
& \quad \lesssim \|\delta\|^{-\beta} + |\tau|^{-\beta/\alpha} \int_{\{\|\mathrm{sign}(\tau) \cdot x + \widetilde{\delta}\|\leq \|\widetilde{\delta}\|/2\}} \|\mathrm{sign}(\tau) \cdot x+ \widetilde{\delta}\|^{-\beta} \cdot p(x)\, \textnormal{d}x \\
& \quad \lesssim \|\delta\|^{-\beta} + |\tau|^{-\beta/\alpha} \cdot \|\widetilde{\delta}\|^{-d-\alpha} \int_0^{\|\widetilde{\delta}\|} r^{d - \beta - 1}\, \textnormal{d}r = \|\delta\|^{-\beta} + |\tau|^{-\beta/\alpha} \cdot \|\widetilde{\delta}\|^{-\alpha-\beta} \\
& \quad = \|\delta\|^{-\beta} + |\tau| \cdot \|\delta\|^{-\alpha -\beta} \lesssim \|\delta\|^{-\beta},
\end{align*}
where we have used \eqref{eq:p_tail_estimates} to estimate the tail-densities.
\end{proof}

We get a lower bound for the Hausdorff dimension of the range of a stable L\'{e}vy process with drift.

\begin{theorem}\label{thm:Main_Theorem_Hausdorff_Range}
Let $T\subseteq \R_+$ be a Borel set and $\alpha \in (0,2)$. Let $X=(X_t)_{t\geq0}$ be an isotropic $\alpha$-stable L\'{e}vy process in $\Rd$ and $f:T\rightarrow \{y\in\Rd:\|y-x\|\leq\frac12\}$ for fixed $x \in \Rd$ be a Borel measurable function.  Let $\ph_\alpha = \mathcal{P}^\alpha\m\dim \pazocal{G}_T(f)$. Then one $\Pb$-almost surely has
\begin{equation}\label{eq:range_lower_bound}
\dim \pazocal{R}_T(X+f) \geq
\begin{cases}
\left(\alpha \cdot \ph_\alpha\right) \ \wedge \ d, & \alpha \in (0,1],\\
\ph_\alpha \ \wedge \ d, & \alpha \in [1,2).
\end{cases}
\end{equation}
\end{theorem}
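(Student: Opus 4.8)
The plan is to reduce everything, via Lemma~\ref{le:converted_kernel_2}, to proving that the converted energy $\pazocal{E}_{\kappa^\beta}(\mu)$ is finite for a suitably chosen $\mu\in\pazocal{M}^1(\pazocal{G}_T(f))$, for every exponent $\beta$ strictly below the right-hand side of \eqref{eq:range_lower_bound}; letting $\beta$ increase to that threshold along a countable sequence and intersecting the corresponding almost sure events then yields the claim. Write $r_\alpha$ for the right-hand side of \eqref{eq:range_lower_bound}, i.e. $r_\alpha=(\alpha\ph_\alpha)\wedge d$ for $\alpha\in(0,1]$ and $r_\alpha=\ph_\alpha\wedge d$ for $\alpha\in[1,2)$. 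If $\ph_\alpha=0$ there is nothing to prove, so assume $\ph_\alpha>0$ and fix $\beta<r_\alpha$. Because $r_\alpha\le d$ we have $\beta<d$, so Lemma~\ref{le:kernel_estimate_2} is applicable, and because $\beta<\alpha\ph_\alpha$ (resp. $\beta<\ph_\alpha$) I can pick an auxiliary exponent $\gamma$ with $\beta/\alpha<\gamma<\ph_\alpha$ when $\alpha\le1$, resp. $\beta<\gamma<\ph_\alpha$ when $\alpha\ge1$, and let $\mu$ be the Frostman measure furnished by Theorem~\ref{th:FrostmansLemma} for this $\gamma$.

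Next I would set $\tau:=t-s$ and $\delta:=f(t)-f(s)$, so that $\|\delta\|\le1$ by the hypothesis on $f$, and split $\pazocal{E}_{\kappa^\beta}(\mu)=\iint_{\pazocal{G}_T(f)\times\pazocal{G}_T(f)}\kappa^\beta(\tau,\delta)\,d\mu\,d\mu$ into the regions $\{|\tau|>1\}$ and $\{|\tau|\in(0,1]\}$. On $\{|\tau|>1\}$ the self-similarity computation from the proof of Lemma~\ref{le:kernel_estimate_2} (together with the existence of negative moments of order $\beta<d$, Lemma 3.1 in \cite{BMS}) gives $\kappa^\beta(\tau,\delta)\lesssim|\tau|^{-\beta/\alpha}\le1$, so this contributes at most a constant. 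On $\{|\tau|\in(0,1]\}$ I would further split according to $\|\delta\|\le|\tau|^{1/\alpha}$ versus $\|\delta\|>|\tau|^{1/\alpha}$ (equivalently $|\tau|<\|\delta\|^\alpha$) and invoke the two estimates of Lemma~\ref{le:kernel_estimate_2}: $\kappa^\beta(\tau,\delta)\lesssim|\tau|^{-\beta/\alpha}$ on the first region and $\kappa^\beta(\tau,\delta)\lesssim\|\delta\|^{-\beta}$ on the second.

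The remaining bounds are combinatorial and parallel the proof of Theorem~\ref{le:EnergyEstimates} verbatim. For each dyadic scale $2^{-k}$ I would tile $\R_+\times\Rd$ by disjoint $\alpha$-parabolic cylinders of size $2^{-k}\times2^{-k/\alpha}\times\cdots\times2^{-k/\alpha}$, estimate each of the two pieces by $\sum_{k}2^{k\beta/\alpha}\,(\mu\otimes\mu)(\{\cdot\})$ where the indicator set pins down $|\tau|\asymp2^{-k}$ (for the first piece) resp. $\|\delta\|\asymp2^{-k/\alpha}$ (for the second), and observe that a fixed cylinder is compatible with only $O(1)$ other cylinders under these constraints, uniformly in $k$. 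Theorem~\ref{th:FrostmansLemma} bounds the $\mu$-mass of each such cylinder by $(2^{-k})^{\gamma}$ for $\alpha\le1$, resp. by $(2^{-k})^{\gamma/\alpha}$ for $\alpha\ge1$; combined with $\sum_{Q}\mu(Q)=1$ each piece collapses to a geometric series $\sum_{k}2^{k(\beta-\alpha\gamma)/\alpha}$ (for $\alpha\le1$) resp. $\sum_{k}2^{k(\beta-\gamma)/\alpha}$ (for $\alpha\ge1$), which converges by the choice of $\gamma$. Hence $\pazocal{E}_{\kappa^\beta}(\mu)<\infty$, and Lemma~\ref{le:converted_kernel_2} gives $\dim\pazocal{R}_T(X+f)\ge\beta$ $\Pb$-almost surely.

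Since Lemma~\ref{le:kernel_estimate_2} already performs the delicate density tail analysis, I expect the only real subtlety in this proof to be bookkeeping: checking that the number of neighbouring parabolic cylinders is bounded independently of $k$ in both scaling regimes, keeping the cases $\alpha\le1$ and $\alpha\ge1$ (with their different forms of Frostman's lemma) straight, and — crucially — noting that the constraint $\beta<d$ built into Lemma~\ref{le:kernel_estimate_2} is exactly what produces the $\wedge\,d$ in the statement, so that $\beta$ may be pushed up to, but not past, $r_\alpha$.
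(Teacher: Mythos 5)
Your proposal is correct and follows essentially the same route as the paper: the paper's own proof simply invokes Lemma \ref{le:converted_kernel_2}, the kernel bounds of Lemma \ref{le:kernel_estimate_2}, and the parabolic Frostman measure of Theorem \ref{th:FrostmansLemma}, and then refers to the dyadic-tiling argument of Theorem \ref{le:EnergyEstimates} "analogously," which is exactly the bookkeeping you carry out. Your observations that the restriction $\beta<d$ in Lemma \ref{le:kernel_estimate_2} is the source of the $\wedge\, d$, and that the two Frostman exponents ($c^{\gamma}$ versus $c^{\gamma/\alpha}$) produce the two different lower bounds, coincide with the remarks the authors themselves make.
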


\begin{proof}
We consider the difference kernel $\kappa^\beta(t,x) = \E \big[\|\mathrm{sign}(t) \cdot X_{|t|}(\omega) + x\|^{-\beta}\big]$ from Lemma \ref{le:converted_kernel_2}. Analogously to the proof of Theorem \ref{le:EnergyEstimates}, we can show that $\pazocal{E}_{\kappa^\beta}(\mu)$ is finite for $\mu \in \pazocal{M}^1(\pazocal{G}_T(f))$ from the parabolic version of Frostman's Lemma in Theorem \ref{th:FrostmansLemma} and for every $\beta$ less than the right-hand side of (\ref{eq:range_lower_bound}). Note that the different lower bounds for the dimension of the range in cases $\alpha \in (0,1]$ and $\alpha \in [1,2)$ occur due to the different upper bounds in the parabolic version of Frostman's Lemma.
\end{proof}

\section{Estimates for the Parabolic Hausdorff Dimension}

We first give an estimate for the $\alpha$-parabolic Hausdorff dimension of a constant function.

\begin{lemma}\label{thm:Nullfuntion}
Let $T\subseteq \R$ be any set and $\alpha \in (0,\infty)$. Define the constant function $f_{C}(x)= C \in \Rd$ for all $x\in T$. Then one has
\begin{equation*}
\mathcal{P}^\alpha\m\dim \pazocal{G}_T( f_{C}) \leq (\alpha \vee 1) \cdot \dim T
\end{equation*}
\end{lemma}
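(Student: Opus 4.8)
The plan is to exploit that $f_C$ is constant, so that $\pazocal{G}_T(f_C)=T\times\{C\}\subseteq\R^{1+d}$: the space coordinate of any covering $\alpha$-parabolic cylinder only has to capture the single point $C=(C_1,\dots,C_d)$. I would therefore start from a near-optimal cover of $T\subseteq\R$ by intervals $[t_k,t_k+r_k]$ and \emph{lift} each of them to the $\alpha$-parabolic cylinder $\mathsf{P}_k:=[t_k,t_k+r_k]\times\prod_{j=1}^d[C_j,C_j+r_k^{1/\alpha}]\in\mathcal{P}^\alpha$ (which requires only $r_k\in(0,1]$). By construction $[t_k,t_k+r_k]\times\{C\}\subseteq\mathsf{P}_k$, so the $\mathsf{P}_k$ cover $\pazocal{G}_T(f_C)$, and everything reduces to estimating $\sum_k|\mathsf{P}_k|^\gamma$.

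The quantitative core is the elementary diameter estimate $|\mathsf{P}_k|\asymp r_k^{1/(\alpha\vee1)}$: for $r_k\in(0,1]$ the larger of the two edge lengths of $\mathsf{P}_k$ is the space edge $r_k^{1/\alpha}$ when $\alpha\geq1$ and the time edge $r_k$ when $\alpha\leq1$, and in either case this larger edge equals $r_k^{1/(\alpha\vee1)}$, giving $r_k^{1/(\alpha\vee1)}\leq|\mathsf{P}_k|\leq\sqrt{d+1}\,r_k^{1/(\alpha\vee1)}$. So, given any $\gamma>(\alpha\vee1)\cdot\dim T$, I would pick $\beta$ with $\dim T<\beta<\gamma/(\alpha\vee1)$; since then $\pazocal{H}^\beta(T)=0$, for prescribed $\eps,\delta>0$ there is an interval cover of $T$ with $\sum_k r_k^\beta<\eps$ and all $r_k$ arbitrarily small — in particular small enough that $|\mathsf{P}_k|\leq\sqrt{d+1}\,r_k^{1/(\alpha\vee1)}\leq\delta$. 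Then, using $\gamma/(\alpha\vee1)>\beta$ and $r_k\leq1$,
\begin{equation*}
\mathcal{P}^\alpha\m\pazocal{H}^\gamma\big(\pazocal{G}_T(f_C)\big)\ \leq\ \sum_{k}|\mathsf{P}_k|^\gamma\ \lesssim\ \sum_{k}r_k^{\gamma/(\alpha\vee1)}\ \leq\ \sum_{k}r_k^{\beta}\ <\ \eps .
\end{equation*}
Letting $\eps\downarrow0$ and then $\delta\downarrow0$ gives $\mathcal{P}^\alpha\m\pazocal{H}^\gamma(\pazocal{G}_T(f_C))=0$, hence $\mathcal{P}^\alpha\m\dim\pazocal{G}_T(f_C)\leq\gamma$; since $\gamma>(\alpha\vee1)\cdot\dim T$ was arbitrary, the claim follows.

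I do not anticipate a genuine obstacle: the argument is essentially bookkeeping around Definition~\ref{def:parabolic_measure}. The one step deserving care — and the place where the factor $\alpha\vee1$ is born — is the dichotomy in the diameter estimate: for thin cylinders ($\alpha>1$) the dominant space edge $r_k^{1/\alpha}$ inflates the covering exponent by $1/\alpha$, whereas for fat cylinders ($\alpha\leq1$) the dominant time edge $r_k$ makes the lift dimension-neutral, which is exactly why one gets $\alpha\cdot\dim T$ in the first regime and $\dim T$ in the second. A secondary detail is that the admissible mesh of the interval cover must be shrunk (by a power of $\delta$ depending on $\alpha$ and $d$) so that the lifted cylinders still satisfy the constraint $|\mathsf{P}_k|\leq\delta$ built into $\mathcal{P}^\alpha\m\pazocal{H}^\gamma$; once that is arranged the two limits pass through without difficulty.
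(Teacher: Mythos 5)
Your proposal is correct and follows essentially the same route as the paper: cover $T$ by small intervals, lift each to an $\alpha$-parabolic cylinder anchored at the constant value $C$, and use the dichotomy $|\mathsf{P}_k|\asymp r_k^{1/(\alpha\vee 1)}$ to convert the $\beta$-sum over intervals into a $\gamma$-sum over cylinders. The paper merely writes the two regimes $\alpha\in(0,1]$ and $\alpha\in[1,\infty)$ as separate cases instead of using the unified exponent $1/(\alpha\vee 1)$, and normalises $C=0$ without loss of generality; the substance is identical.
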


\begin{proof}
Without loss of generality, let $f_C = f_0 \equiv 0 \in \R^d$. 

\emph{(i)}  Let $\alpha \in (0,1]$, $\beta = \dim T$ and let $\delta,\eps >0$ be arbitrary. Then there exists a cover $\bigcup_{k\in\N}\ T_k \supseteq T$ with $T_k = [t_{k},\, t_{k}+c_k]$ and $c_{k}\leq1$ such that $
\sum_{k=1}^\infty |T_k|^{\beta+\delta} = \sum_{k=1}^\infty c_k^{\beta+\delta}\leq \eps.$ Now, $\pazocal{G}_T( f_{0})$ can be covered by $\alpha$-parabolic cylinders
\begin{equation*}
(\mathsf{P}_{k})_{k\in\N} = \Bigg([t_{k},\, t_{k}+c_k] \times \prod_{j=1}^d \ [0,c_k^{1/\alpha}]\Bigg)_{k\in\N} \subseteq \mathcal{P}^\alpha
\end{equation*}
with $|\mathsf{P}_{k}| \asymp c_k$. Hence 
\begin{align*}
\mathcal{P}^\alpha\m\pazocal{H}^{\beta + \delta}(\pazocal{G}_T(f_0)) \leq \sum_{k=1}^\infty |\mathsf{P}_{k}|^{\beta + \delta} \lesssim \sum_{k=1}^\infty c_k^{\beta + \delta} \leq \eps .
\end{align*}
Since $\delta>0$ is arbitrary,  for all $\beta' > \beta$ we have $\mathcal{P}^\alpha\m\pazocal{H}^{\beta'}(\pazocal{G}_T(f_0)) < \infty$ and therefore one has $\mathcal{P}^\alpha\m\dim \pazocal{G}_T(f_0)\leq \beta'$. Since $\beta' > \beta$ is also arbitrary, we obtain
\begin{gather*}
\mathcal{P}^\alpha\m\dim \pazocal{G}_T(f_0) \leq \beta = \dim T.
\end{gather*}

\emph{(ii)} Let $\alpha \in [1,\infty)$, $\beta = \alpha \cdot \dim T$ and let $\delta,\, \eps >0$ be arbitrary. 
With the cover $\bigcup_{k\in\N}\ T_k \supseteq T$ from part \emph{(i)} we get
$\sum_{k=1}^\infty |T_k|^{(\beta+\delta) / \alpha}=\sum_{k=1}^\infty c_k^{(\beta + \delta) / \alpha}\leq \eps$. 
Then the cover $\bigcup_{k\in\N} \mathsf{P}_{k}\supseteq \pazocal{G}_T( f_{0})$ from part \emph{(i)} now fulfills
$\big|\mathsf{P}_{k}\big| \asymp c_k^{1/\alpha}$ and it follows that
\begin{align*}
 \mathcal{P}^\alpha\m\pazocal{H}^{\beta + \delta}(\pazocal{G}_T(f_0))
\leq \sum_{k=1}^\infty \big|\mathsf{P}_{k}\big|^{\beta + \delta}
\lesssim \sum_{k=1}^\infty c_k^{(\beta + \delta)/\alpha}
\leq \eps.
\end{align*}
 Since $\delta>0$ and $\beta'>\beta$ are arbitrary, as in part \emph{(i)} we get $\mathcal{P}^\alpha\m\dim \pazocal{G}_T(f_0) \leq \beta = \alpha \cdot \dim T$.
\end{proof}

We can calculate the $\alpha$-parabolic Hausdorff dimension of the graph of an isotropic $\alpha$-stable L\'{e}vy process itself.

\begin{theorem}\label{thm:Dimension_X}
Let $T\subseteq \R_+$ be a Borel set and $\alpha \in (0,2]$. Let $X=(X_t)_{t\geq0}$ be an isotropic $\alpha$-stable L\'{e}vy process in $\Rd$. Then one $\Pb$-almost surely has
\begin{align*}
\mathcal{P}^\alpha\m\dim\pazocal{G}_T(X) = (\alpha \vee 1) \cdot \dim T.
\end{align*}
\end{theorem}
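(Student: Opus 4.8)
The plan is to prove the two inequalities separately: a deterministic lower bound coming from the projection onto the time axis, and an almost sure upper bound assembled from the results of Sections 4 and 7.

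For the lower bound, let $\pi_t\colon\R^{1+d}\to\R$ denote the projection onto the time coordinate, so that $\pi_t(\pazocal{G}_T(X(\omega)))=T$ for every $\omega$. If $\mathsf{P}=[t,t+c]\times\prod_{j=1}^d[x_j,x_j+c^{1/\alpha}]\in\mathcal{P}^\alpha$ with $c\in(0,1]$, then $\pi_t(\mathsf{P})$ is an interval of diameter $c$, and from $|\mathsf{P}|^2=c^2+d\,c^{2/\alpha}$ with $c\le1$ one checks $c\le|\mathsf{P}|^{\alpha\vee1}$ in both regimes $\alpha\le1$ and $\alpha\ge1$. Hence any cover of $\pazocal{G}_T(X(\omega))$ by $\alpha$-parabolic cylinders $(\mathsf{P}_k)_{k}$ projects to a cover of $T$ by intervals satisfying $\sum_k|\pi_t(\mathsf{P}_k)|^{\beta/(\alpha\vee1)}\le\sum_k|\mathsf{P}_k|^{\beta}$, so that $\pazocal{H}^{\beta/(\alpha\vee1)}(T)\le\mathcal{P}^\alpha\m\pazocal{H}^{\beta}(\pazocal{G}_T(X(\omega)))$ for every $\beta>0$ (this step is purely geometric and in fact works for the graph of an arbitrary function). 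Thus $\mathcal{P}^\alpha\m\pazocal{H}^{\beta}(\pazocal{G}_T(X))=0$ forces $\beta/(\alpha\vee1)\ge\dim T$, and therefore $\mathcal{P}^\alpha\m\dim\pazocal{G}_T(X)\ge(\alpha\vee1)\cdot\dim T$ holds surely.

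For the upper bound in the range $\alpha\in[1,2]$, applying Theorem \ref{thm:dim_X+f_leq_dim_f} to the zero drift $f\equiv0\in\Rd$ gives $\mathcal{P}^\alpha\m\dim\pazocal{G}_T(X)=\mathcal{P}^\alpha\m\dim\pazocal{G}_T(X+f)\le\mathcal{P}^\alpha\m\dim\pazocal{G}_T(f)$ $\Pb$-almost surely, and Lemma \ref{thm:Nullfuntion} bounds the right-hand side by $(\alpha\vee1)\cdot\dim T$. For $\alpha\in(0,1]$ I would rerun the covering argument from part (i) of the proof of Theorem \ref{thm:dim_X+f_leq_dim_f}, specialised to the zero drift. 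Put $\beta=\dim T$ and fix $\delta,\eps>0$; cover $T$ by intervals $[t_k,t_k+c_k]$ with $c_k\le1$ arbitrarily small and $\sum_kc_k^{\beta+\delta}\le\eps$. For each $k$ let $M_k(\omega)$ be the number of hypercubes of sidelength $c_k^{1/\alpha}$ from a fixed $2^d$-nested family that the path $t\mapsto X_t(\omega)$ meets during $[t_k,t_k+c_k]$; then $\pazocal{G}_T(X(\omega))$ is covered by the corresponding $M_k(\omega)$ cylinders $[t_k,t_k+c_k]\times(\text{such a hypercube})$, which genuinely belong to $\mathcal{P}^\alpha$ and have diameter $\sqrt{c_k^2+d\,c_k^{2/\alpha}}\le\sqrt{1+d}\,c_k$ because $c_k^{2/\alpha}\le c_k^2$ for $\alpha\le1$. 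Using $\E[M_k]\lesssim c_k/\E[T(c_k^{1/\alpha}/3,c_k)]\lesssim c_k^{-\delta'/\alpha}$ for arbitrary $\delta'>0$, exactly as in the proof of Theorem \ref{thm:dim_X+f_leq_dim_f} via Lemma 6.1 in \cite{PT69} and Lemma 3.4 in \cite{MX05}, one obtains
\begin{equation*}
\E\big[\mathcal{P}^\alpha\m\pazocal{H}^{\beta+\delta+\delta'/\alpha}(\pazocal{G}_T(X))\big]\ \lesssim\ \sum_{k=1}^\infty\E[M_k]\cdot c_k^{\beta+\delta+\delta'/\alpha}\ \lesssim\ \sum_{k=1}^\infty c_k^{\beta+\delta}\ \le\ \eps.
\end{equation*}
Since $\eps,\delta,\delta'$ are arbitrary, $\mathcal{P}^\alpha\m\pazocal{H}^{\beta'}(\pazocal{G}_T(X))=0$ $\Pb$-almost surely for every $\beta'>\dim T$, whence $\mathcal{P}^\alpha\m\dim\pazocal{G}_T(X)\le\dim T=(\alpha\vee1)\cdot\dim T$ $\Pb$-almost surely; combined with the lower bound this proves the claim.

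The only genuinely probabilistic ingredient is the covering step for $\alpha\in(0,1]$, where the Pruitt--Taylor covering lemma and the sojourn-time estimate are needed to keep $\E[M_k]$ subpolynomial; the rest is bookkeeping with Theorem \ref{thm:dim_X+f_leq_dim_f}, Lemma \ref{thm:Nullfuntion}, and the elementary projection bound. The point to watch is that for $\alpha\le1$ the cylinders $[t_k,t_k+c_k]\times(\text{cube of side }c_k^{1/\alpha})$ really do lie in $\mathcal{P}^\alpha$ and have diameter $\asymp c_k$ — this is exactly where the constraint $\alpha\le1$ is used — whereas for $\alpha\ge1$ one cannot replace the appeal to Lemma \ref{thm:Nullfuntion} by an analogous direct covering, since the time extent no longer dominates.
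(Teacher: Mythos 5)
Your proposal is correct, and its lower bound takes a genuinely different route from the paper. The paper obtains $\mathcal{P}^\alpha\m\dim\pazocal{G}_T(X)\geq(\alpha\vee1)\cdot\dim T$ by quoting the known almost sure value of $\dim\pazocal{G}_T(X)$ from Wedrich's theorem and then feeding it into the a priori comparison inequalities of Theorem \ref{le:General_Upper_Bound_Hausdorff}; your projection argument replaces this by the elementary observation that $c\leq|\mathsf{P}|^{\alpha\vee1}$ for every $\mathsf{P}\in\mathcal{P}^\alpha$, so that any parabolic cover of the graph pushes forward to an interval cover of $T$ with $\sum_k|\pi_t(\mathsf{P}_k)|^{\beta/(\alpha\vee1)}\leq\sum_k|\mathsf{P}_k|^{\beta}$. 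This buys a \emph{sure} (not merely almost sure) lower bound valid for the graph of an arbitrary function over $T$, with no external input. For the upper bound your $\alpha\in[1,2]$ case coincides with the paper's (Theorem \ref{thm:dim_X+f_leq_dim_f} plus Lemma \ref{thm:Nullfuntion}), while for $\alpha\in(0,1]$ you rerun the Pruitt--Taylor covering directly for the \emph{parabolic} measure; this is in fact more careful than the paper's citation chain, since Theorem \ref{thm:dim_X+f_leq_dim_f} as stated only controls the genuine Hausdorff dimension when $\alpha\leq1$, whereas the paper's displayed inequality $\mathcal{P}^\alpha\m\dim\pazocal{G}_T(X)\leq\mathcal{P}^\alpha\m\dim\pazocal{G}_T(f_0)$ needs exactly the computation you carry out. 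One minor quibble: your closing claim that an analogous direct covering is unavailable for $\alpha\geq1$ is not quite right --- using cylinders of diameter $\asymp c_k^{1/\alpha}$ and the same bound $\E[M_k]\lesssim c_k^{-\delta'/\alpha}$ one gets $\mathcal{P}^\alpha\m\dim\pazocal{G}_T(X)\leq\alpha\cdot\dim T$ directly as well --- but since you do not rely on this remark it does not affect the proof.
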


\begin{proof}
By Theorem 3.2 in \cite{W17}, Theorem \ref{le:General_Upper_Bound_Hausdorff}, Theorem \ref{thm:dim_X+f_leq_dim_f} and Lemma \ref{thm:Nullfuntion} in case $\alpha \cdot \dim T \geq 1$, i.e. $\alpha \in [1,2]$, and $f_0 \equiv 0 \in \R^d$ one $\Pb$-almost surely has
\begin{align*}
\dim T + 1-1/\alpha 
&=  \dim  \pazocal{G}_T(X) 
\leq 1/\alpha \cdot \mathcal{P}^\alpha\m\dim  \pazocal{G}_T(X) + 1-1/\alpha\\
&\leq 1/\alpha \cdot \mathcal{P}^\alpha\m\dim  \pazocal{G}_T(f_0) + 1-1/\alpha 
\leq \dim T + 1-1/\alpha.
\end{align*}
In the other cases, Theorem 3.1 in \cite{W17} together with the same theorems as above $\Pb$-almost surely yield 
\begin{align*}
& (\alpha \vee 1) \cdot \dim T = \dim  \pazocal{G}_T(X) \leq \mathcal{P}^\alpha\m\dim \pazocal{G}_T(X) \leq \mathcal{P}^\alpha\m\dim \pazocal{G}_T(f_0) \leq (\alpha \vee 1) \cdot \dim T
\end{align*}
and the claim follows.
\end{proof}

\begin{remark}
We can also deduce the Hausdorff dimension of the graph of the fractional Brownian motion $B^H = \big(B_t^H\big)_{t \geq 0}$ in $\Rd$ of Hurst index $1/\alpha = H \in (0,1]$. One $\Pb$-almost surely has
\begin{align*}
\mathcal{P}^\alpha\m\dim\pazocal{G}_{T}\big(B^H\big) = \frac{\dim T}{H} = \alpha \cdot \dim T.
\end{align*}
This follows from Theorem 2.1 in \cite{X95}, Proposition \ref{pro:P2=2P}, Theorem \ref{le:General_Upper_Bound_Hausdorff}, Lemma 2.2 in \cite{PS16} and Lemma \ref{thm:Nullfuntion} for $\alpha \cdot \dim T \leq d$ and $f_0 \equiv 0 \in \R^d$ which $\Pb$-almost surely yield
\begin{align*}
\alpha \cdot \dim T &= \dim \pazocal{G}_{T}\big(B^H\big) \leq \mathcal{P}^\alpha\m\dim \pazocal{G}_{T}\big(B^H\big) \\
 & = \mathcal{P}^\alpha\m\dim \pazocal{G}_{T}(f_0) \leq \alpha \cdot \dim T.
\end{align*}
In the other cases the same theorems $\Pb$-almost surely yield
\begin{align*}
& \dim T + (1 - 1/\alpha) \cdot d 
= \dim \pazocal{G}_{T}\big(B^H\big)
\leq \mathcal{P}^\alpha\m\dim \pazocal{G}_{T}\big(B^H\big)/\alpha + (1 - 1/\alpha) \cdot d\\
& \quad = \mathcal{P}^\alpha\m\dim \pazocal{G}_{T}(f_0) / \alpha + (1 - 1/\alpha) \cdot d 
\leq \dim T + (1 - 1/\alpha) \cdot d
\end{align*}
and the claim follows.
\end{remark}

The calculations in the proof of the previous theorem further show:

\begin{cor}\label{cor:constant_function}
Let $T\subseteq \R_+$ be a Borel set and $\alpha \in (0,\infty)$. Define the constant function $f_{C}(x)= C \in \Rd$ for all $x\in T$. Then one has
\begin{equation*}
\mathcal{P}^\alpha\m\dim \pazocal{G}_T( f_{C}) = (\alpha \vee 1) \cdot \dim T.
\end{equation*}
\end{cor}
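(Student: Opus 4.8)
The upper bound $\mathcal{P}^\alpha\m\dim \pazocal{G}_T(f_C)\le(\alpha\vee1)\cdot\dim T$ is exactly Lemma~\ref{thm:Nullfuntion}, so the entire content of the corollary is the reverse inequality. Since the class $\mathcal{P}^\alpha$ is invariant under translations of $\R^{1+d}$, the $\alpha$-parabolic Hausdorff measure (and hence dimension) is translation invariant, so we may assume $C=0$ and $\pazocal{G}_T(f_0)=T\times\{0\}$. The plan is to produce, for every $\beta<\dim T$, a probability measure on $\pazocal{G}_T(f_0)$ whose mass on each $\alpha$-parabolic cylinder is bounded by a suitable power of the cylinder's diameter, and then to conclude by the mass distribution principle.

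Concretely, I would fix $\beta\in(0,\dim T)$ and apply the classical Frostman lemma (see section~4.4 in \cite{MP10}) to the Borel set $T\subseteq\R$ to obtain $\nu\in\pazocal{M}^1(T)$ with $\nu(I)\lesssim|I|^{\beta}$ for every interval $I\subseteq\R$. Let $\mu\in\pazocal{M}^1(\pazocal{G}_T(f_0))$ be the pushforward of $\nu$ under $t\mapsto(t,0)$. For an $\alpha$-parabolic cylinder $\mathsf{P}=[t,t+c]\times\prod_{i=1}^d[x_i,x_i+c^{1/\alpha}]$ with $c\in(0,1]$ the time-projection of $\mathsf{P}$ is $[t,t+c]$, so $\mu(\mathsf{P})\le\nu([t,t+c])\lesssim c^{\beta}$. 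Because $c\le1$ one has $|\mathsf{P}|\asymp c$ for $\alpha\in(0,1]$ and $|\mathsf{P}|\asymp c^{1/\alpha}$ for $\alpha\in[1,\infty)$, i.e.\ $c\asymp|\mathsf{P}|^{\alpha\vee1}$, hence $\mu(\mathsf{P})\lesssim|\mathsf{P}|^{(\alpha\vee1)\beta}$. Then for any cover $\pazocal{G}_T(f_0)\subseteq\bigcup_k\mathsf{P}_k$ with $\mathsf{P}_k\in\mathcal{P}^\alpha$ one gets $1=\mu(\pazocal{G}_T(f_0))\le\sum_k\mu(\mathsf{P}_k)\lesssim\sum_k|\mathsf{P}_k|^{(\alpha\vee1)\beta}$, so $\mathcal{P}^\alpha\m\pazocal{H}^{(\alpha\vee1)\beta}(\pazocal{G}_T(f_0))\gtrsim1>0$ and therefore $\mathcal{P}^\alpha\m\dim\pazocal{G}_T(f_0)\ge(\alpha\vee1)\beta$. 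Letting $\beta\uparrow\dim T$ yields the lower bound, and combined with Lemma~\ref{thm:Nullfuntion} this proves the claim.

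For $\alpha\in(0,2]$ the lower bound may alternatively be extracted from the computation in the proof of Theorem~\ref{thm:Dimension_X}, as the phrasing preceding the corollary suggests: by translation invariance $\mathcal{P}^\alpha\m\dim\pazocal{G}_T(X+f_C)=\mathcal{P}^\alpha\m\dim\pazocal{G}_T(X)=(\alpha\vee1)\cdot\dim T$ $\Pb$-almost surely by Theorem~\ref{thm:Dimension_X}, while Theorem~\ref{thm:dim_X+f_leq_dim_f} bounds $\mathcal{P}^\alpha\m\dim\pazocal{G}_T(X+f_C)$ from above by $\mathcal{P}^\alpha\m\dim\pazocal{G}_T(f_C)$ for $\alpha\in[1,2]$ (for $\alpha\in(0,1]$ one instead uses $\mathcal{P}^\alpha\m\dim\ge\dim$ together with $\dim\pazocal{G}_T(f_C)=\dim T$); for $\alpha\in(2,\infty)$ one replaces the stable process by a fractional Brownian motion of Hurst index $1/\alpha$ and invokes the Remark following Theorem~\ref{thm:Dimension_X} in place of Theorem~\ref{thm:Dimension_X}. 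I do not expect a genuine obstacle here: once Lemma~\ref{thm:Nullfuntion} and Theorem~\ref{thm:Dimension_X} (resp.\ the corresponding Remark) are available the statement is largely bookkeeping, and the only point requiring care is the exponent switch $c\asymp|\mathsf{P}|^{\alpha\vee1}$ — that is, correctly tracking whether the time side or the space side of a parabolic cylinder governs its diameter — and ensuring that the covering cylinders stay in the admissible range $c\in(0,1]$.
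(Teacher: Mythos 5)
Your proposal is correct, and your primary argument takes a genuinely different route from the paper's. The paper gives no separate proof: it extracts the equality from the sandwich in the proof of Theorem~\ref{thm:Dimension_X} (and, for $\alpha>2$, from the analogous sandwich in the Remark on fractional Brownian motion), where the chain
$(\alpha\vee1)\cdot\dim T=\dim\pazocal{G}_T(X)\leq\mathcal{P}^\alpha\m\dim\pazocal{G}_T(X)\leq\mathcal{P}^\alpha\m\dim\pazocal{G}_T(f_0)\leq(\alpha\vee1)\cdot\dim T$
forces equality throughout --- exactly the alternative you sketch in your second paragraph. Your first argument instead proves the lower bound directly: push a Frostman measure $\nu\in\pazocal{M}^1(T)$ with $\nu(I)\lesssim|I|^\beta$ forward to $T\times\{0\}$, observe that a cylinder with time side $c\in(0,1]$ satisfies $c\asymp|\mathsf{P}|^{\alpha\vee1}$, and apply the mass distribution principle for covers restricted to $\mathcal{P}^\alpha$. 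This is sound (the only care points, which you flag, are the diameter dichotomy at $\alpha=1$ and the restriction $c\leq1$; one should also note that Frostman's lemma for a Borel set $T$ is applied after passing to a compact subset of dimension arbitrarily close to $\dim T$, which costs nothing in the limit $\beta\uparrow\dim T$). What each approach buys: the paper's is essentially free once the probabilistic dimension results for $\pazocal{G}_T(X)$ and $\pazocal{G}_T(B^H)$ are in place, but it is indirect, almost-sure rather than deterministic in flavour, and needs two different processes spliced together to cover all of $\alpha\in(0,\infty)$; yours is purely deterministic, self-contained, uniform in $\alpha$, and makes the corollary logically independent of Sections~3--6, at the modest price of invoking the classical Frostman lemma and the (routine) mass distribution principle for $\mathcal{P}^\alpha$-covers.
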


As a consequence, we recover a  well-known result for the range of an isotropic $\alpha$-stable L\'{e}vy process; see \cite{BG60b} and Theorem 3.1 in \cite{MX05}.

\begin{cor}
Let $T\subseteq \R_+$ be a Borel set and $\alpha \in (0,2]$. Let $X=(X_t)_{t\geq0}$ be an isotropic $\alpha$-stable L\'{e}vy process. One $\Pb$-almost surely has
\begin{align*}
\dim \pazocal{R}_T(X) = \left(\alpha \cdot \dim T\right) \wedge d.
\end{align*}
\end{cor}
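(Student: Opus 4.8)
The plan is to specialize the general range formula from Theorem~\ref{thm:Main_theorem_range} to the zero drift function and then read off the $\alpha$-parabolic Hausdorff dimension of its graph from Corollary~\ref{cor:constant_function}. This reduces the statement to a direct substitution, so I expect no genuine obstacle beyond bookkeeping of the two regimes $\alpha\in(0,1]$ and $\alpha\in[1,2]$.

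First I would take $f\equiv f_0\equiv 0\in\Rd$ on $T$. This is a constant, hence Borel measurable, function, so Theorem~\ref{thm:Main_theorem_range} applies to $X+f_0=X$ and yields, $\Pb$-almost surely,
\begin{equation*}
\dim \pazocal{R}_T(X)=\dim\pazocal{R}_T(X+f_0)=
\begin{cases}
(\alpha\cdot\ph_\alpha)\ \wedge\ d, & \alpha\in(0,1],\\
\ph_\alpha\ \wedge\ d, & \alpha\in[1,2],
\end{cases}
\end{equation*}
where $\ph_\alpha=\mathcal{P}^\alpha\m\dim\pazocal{G}_T(f_0)$. Next I would invoke Corollary~\ref{cor:constant_function} with $C=0$, which gives $\ph_\alpha=(\alpha\vee 1)\cdot\dim T$. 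Substituting this: for $\alpha\in(0,1]$ we have $\alpha\vee 1=1$, so $\alpha\cdot\ph_\alpha=\alpha\cdot\dim T$; for $\alpha\in[1,2]$ we have $\alpha\vee 1=\alpha$, so $\ph_\alpha=\alpha\cdot\dim T$. In both regimes the right-hand side equals $(\alpha\cdot\dim T)\wedge d$, and the two branches coincide at $\alpha=1$, which establishes the claim for all $\alpha\in(0,2]$.

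The only points worth checking carefully are that Corollary~\ref{cor:constant_function} is stated for $\alpha\in(0,\infty)$ and hence covers the required range $(0,2]$, and that the case distinction in Theorem~\ref{thm:Main_theorem_range} is consistent at the boundary $\alpha=1$ (it is, since both branches give $\dim T\wedge d$ there after the substitution). There is no hard step here: the result is an immediate consequence of the main range theorem together with the already-computed $\alpha$-parabolic dimension of a constant graph. If one preferred to avoid citing Corollary~\ref{cor:constant_function}, one could instead combine the upper bound of Theorem~\ref{thm:Range_dim_X+f_leq_dim_f} with the lower bound of Theorem~\ref{thm:Main_Theorem_Hausdorff_Range}, feeding in the estimate $\mathcal{P}^\alpha\m\dim\pazocal{G}_T(f_0)\leq(\alpha\vee1)\cdot\dim T$ from Lemma~\ref{thm:Nullfuntion} and the classical lower bound for $\dim\pazocal{G}_T(X)$; but routing through the packaged corollary is the cleanest path.
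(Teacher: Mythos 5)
Your proposal is correct and matches the paper's own proof: the authors likewise apply Theorem~\ref{thm:Main_theorem_range} with $f_0\equiv 0$ and substitute $\mathcal{P}^\alpha\m\dim\pazocal{G}_T(f_0)=(\alpha\vee 1)\cdot\dim T$ from Corollary~\ref{cor:constant_function}, writing the two regimes compactly as $\left((\alpha\wedge 1)\cdot\ph_\alpha\right)\wedge d=(\alpha\cdot\dim T)\wedge d$. No differences worth noting.
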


\begin{proof}
From Theorem \ref{thm:Main_theorem_range} and Corollary \ref{cor:constant_function} follows
\begin{align*}
\dim \pazocal{R}_T(X) = \left((\alpha \wedge 1) \cdot \mathcal{P}^\alpha\m\dim \pazocal{G}_T(f_0)\right) \wedge d = \left(\alpha \cdot \dim T\right) \wedge d,
\end{align*}
as claimed.
\end{proof}

We can also give some a priori estimates for the $\alpha$-parabolic Hausdorff dimension of the graph of a function in terms of the genuine Hausdorff dimension.

\begin{theorem}\label{thm:estimates_ph_alpha}
Let $T\subseteq \R$ be any set and $f:T \rightarrow \Rd$ be any function. Let $\ph_\alpha = \mathcal{P}^\alpha\m\dim \pazocal{G}_T(f)$. Then one has
\begin{equation}\label{eq:estimates_1}
\ph_\alpha \leq
\begin{cases}
\left(\ph_1 + \big(\frac{1}{\alpha} - 1\big)\cdot d\right) \ \wedge \ \left(d + 1\right), & \alpha \in (0,1],\\
\left(\ph_1 + \alpha - 1\right) \ \wedge \ \left(d + 1\right), & \alpha \in [1,\infty)
\end{cases}
\end{equation}
and
\begin{equation}\label{eq:estimates_2}
\ph_\alpha \geq
\begin{cases}
\ph_1 \ \vee \ \left(\frac{1}{\alpha} \cdot \ph_1 + 1 - \frac{1}{\alpha}\right), & \alpha \in (0,1],\\
\ph_1 \ \vee \ \left(\alpha \cdot \ph_1 + (1-\alpha) \cdot d\right), & \alpha \in [1,\infty).
\end{cases}
\end{equation}
Further, if $T \subseteq \R_+$ is a Borel set and $f : T \rightarrow \Rd$ is a Borel measurable function, then we obtain
\begin{equation}\label{eq:estimates_3}
\ph_\alpha \leq \left(\tfrac{1}{\alpha} \cdot \ph_1\right) \ \wedge \ \left(\ph_1 + \left(\tfrac{1}{\alpha} - 1 \right) \cdot d\right) \ \wedge \ \left(d+1\right), \quad \alpha \in (0,1].
\end{equation}
\end{theorem}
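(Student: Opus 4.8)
The plan is to deduce \eqref{eq:estimates_1} and \eqref{eq:estimates_2} from Theorem \ref{le:General_Upper_Bound_Hausdorff} applied to the single set $A := \pazocal{G}_T(f) \subseteq \R^{1+d}$, to pick up the extra term in \eqref{eq:estimates_3} from Corollary \ref{cor:improvement_upper_bound}, and to establish the remaining bound $\ph_\alpha \le d+1$ by a direct covering argument.

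For the first point, note that $\dim A = \mathcal{P}^1\m\dim A = \ph_1$ and $\mathcal{P}^\alpha\m\dim A = \ph_\alpha$, so Theorem \ref{le:General_Upper_Bound_Hausdorff} becomes a two-sided relation between $\ph_1$ and $\ph_\alpha$. Rearranging its upper bounds on $\dim A$: for $\alpha \in (0,1]$ the inequalities $\ph_1 \le \ph_\alpha$ and $\ph_1 \le \alpha \ph_\alpha + 1 - \alpha$ give $\ph_\alpha \ge \ph_1$ and $\ph_\alpha \ge \tfrac1\alpha \ph_1 + 1 - \tfrac1\alpha$, while for $\alpha \in [1,\infty)$ the inequalities $\ph_1 \le \ph_\alpha$ and $\ph_1 \le \tfrac1\alpha \ph_\alpha + (1 - \tfrac1\alpha)d$ give $\ph_\alpha \ge \ph_1$ and $\ph_\alpha \ge \alpha \ph_1 + (1-\alpha)d$; taking maxima yields \eqref{eq:estimates_2}. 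Symmetrically, the lower bounds on $\dim A$, namely $\ph_1 \ge \ph_\alpha + (1-\tfrac1\alpha)d$ for $\alpha \in (0,1]$ and $\ph_1 \ge \ph_\alpha + 1 - \alpha$ for $\alpha \in [1,\infty)$, rearrange to the first terms $\ph_1 + (\tfrac1\alpha - 1)d$ resp.\ $\ph_1 + \alpha - 1$ in \eqref{eq:estimates_1}. When in addition $T$ is Borel and $f$ Borel measurable, Corollary \ref{cor:improvement_upper_bound} applies, and the inequality $\ph_1 \ge \alpha \ph_\alpha$ it contains rearranges to the additional term $\ph_\alpha \le \tfrac1\alpha \ph_1$ of \eqref{eq:estimates_3}, the term $\ph_1 + (\tfrac1\alpha - 1)d$ there being the one already obtained.

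It remains to prove $\ph_\alpha \le d+1$, and I would argue by covering. By Proposition \ref{prop:Countable_Stability} and the translation invariance of the class $\mathcal{P}^\alpha$ one may first reduce to $T \subseteq [0,1]$ and $f : T \to [0,1]^d$. Fixing $\beta > d+1$ and $\delta > 0$, one then wants a cover of $\pazocal{G}_T(f)$ by $\alpha$-parabolic cylinders with $\sum_k |\mathsf{P}_k|^\beta$ small; the natural idea is to let the parabolic parameter of the cylinder covering the piece of the graph over a time interval of length $\ell$, on which $f$ oscillates by at most $\rho$, be of order $\max(\ell, \rho^\alpha)$ rather than a fixed scale, and to use that the graph meets each vertical fibre $\{t\} \times \R^d$ in exactly one point to keep the total number of cylinders under control. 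The main obstacle is precisely this accounting: a uniform grid cover only yields the exponent $1 + d/\alpha$ for $\alpha \le 1$ (respectively $\alpha + d$ for $\alpha \ge 1$), so the argument must genuinely exploit the graph structure — not merely the inclusion $\pazocal{G}_T(f) \subseteq \R^{1+d}$ — in order to bring the surviving exponent down to $d+1$. Once this is in place, intersecting it with the estimates of the previous paragraph gives \eqref{eq:estimates_1} and finishes \eqref{eq:estimates_3}.
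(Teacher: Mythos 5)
Your reduction of \eqref{eq:estimates_2} and of the first terms in \eqref{eq:estimates_1} and \eqref{eq:estimates_3} to Theorem \ref{le:General_Upper_Bound_Hausdorff} and Corollary \ref{cor:improvement_upper_bound} is exactly the route the paper takes, and the algebra is correct; in particular you rightly note that Corollary \ref{cor:improvement_upper_bound} requires the Borel hypotheses, which is why the extra term $\tfrac1\alpha\ph_1$ appears only in \eqref{eq:estimates_3}.

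The gap is the clause $\ph_\alpha \le d+1$, which you explicitly leave open: you observe that a uniform grid cover only yields $1+d/\alpha$ (resp.\ $\alpha+d$) and that a proof of $d+1$ would have to exploit the graph structure, but you supply no such argument. Two things are worth knowing here. First, the paper's own proof offers nothing more for this clause than ``the Hausdorff dimension never exceeds the topological dimension''; that gives $\ph_1\le d+1$ and hence, via the first terms, only the caps $1+d/\alpha$ (for $\alpha\le 1$) and $d+\alpha$ (for $\alpha\ge 1$) --- not $d+1$. Second, and more importantly, the missing step cannot be filled in, because $\ph_\alpha\le d+1$ is false for general $f$. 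For $\alpha\in[1,\infty)$ the paper itself computes, in the Remark following Theorem \ref{thm:Dimension_X}, that $\mathcal{P}^\alpha\m\dim \pazocal{G}_{[0,1]}\big(B^{1/\alpha}\big)=\alpha$, which exceeds $d+1$ as soon as $\alpha>d+1$. For $\alpha\in(0,1]$ take $d=1$, $\alpha=\tfrac12$ and $f$ a Brownian path on $[0,1]$: pushing Lebesgue measure forward to the graph, the occupation-time bound $\mu\big([t,t+c]\times[x,x+c^2]\big)\le c^2\sup_a\big(L^a_{t+c}-L^a_t\big)\lesssim_\eps c^{5/2-\eps}$ (uniform modulus of continuity of Brownian local time) together with the mass distribution principle gives $\mathcal{P}^{1/2}\m\dim\pazocal{G}_{[0,1]}(B)\ge \tfrac52>2=d+1$, matching the first term $\ph_1+\big(\tfrac1\alpha-1\big)d=\tfrac32+1$. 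So the honest output of your (and the paper's) argument is \eqref{eq:estimates_1}--\eqref{eq:estimates_3} with the $\wedge\,(d+1)$ clauses deleted; the only universally valid caps are $1+d/\alpha$ and $d+\alpha$, and those are already implied by the first terms once one uses $\ph_1\le d+1$.
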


\begin{proof}
This follows immediately by Theorem \ref{le:General_Upper_Bound_Hausdorff} for (\ref{eq:estimates_1}) and (\ref{eq:estimates_2}) and Corollary \ref{cor:improvement_upper_bound} for (\ref{eq:estimates_3}) and the fact that the Hausdorff dimension never exceeds the toplogical dimension.
\end{proof}

Next we calculate some bounds for the parabolic Hausdorff dimension of $\beta$-H\"{o}lder continuous functions. These are functions $f:T \rightarrow \Rd$ that fulfil $\|f(t) - f(s)\| \leq C\cdot |t-s|^\beta$ for all $s,t\in T$ and some $\beta \in (0,1]$, $C>0$, denoted as $f \in C^\beta(T,\Rd)$. In case of $\alpha =1$, the following theorem is well-known, see \cite{K93}.

\begin{theorem}\label{thm:ph_Upper_bounds}
Let $T \subseteq \R$ be any set, $\alpha \in (0,\infty)$, $\beta \in (0,1]$ and $f\in C^\beta\big(T,\Rd\big)$. Define $\ph_\alpha := \mathcal{P}^\alpha\m\dim \pazocal{G}_T(f)$. Then one has
\begin{align*}
\ph_\alpha \leq
\begin{cases}
\left(\dim T + d \cdot \left(\frac{1}{\alpha} - \beta \right)\right) \ \wedge \ \frac{\dim T}{\alpha \beta}\ \wedge \ \left(d + 1\right), & \alpha \in (0,1],\\
\left(\alpha \cdot \dim T + d \cdot (1 - \alpha\beta)\right) \ \wedge \ \frac{\dim T}{\beta} \ \wedge \ \left(d + 1\right), & \alpha \in \big[1,\frac{1}{\beta}\big],\\
\left(\alpha \cdot \dim T\right)  \ \wedge \  \left(d + 1\right), & \alpha \in \big[\frac{1}{\beta},\infty\big).
\end{cases}
\end{align*}
\end{theorem}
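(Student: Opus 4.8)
The plan is to prove the three upper bounds for $\ph_\alpha$ separately and then take their minimum, exploiting the Hölder hypothesis to build efficient $\alpha$-parabolic covers directly from covers of $T$ by intervals. Throughout, fix an arbitrary cover $\bigcup_k [t_k, t_k+c_k] \supseteq T$ with $c_k \leq 1$ and $\sum_k c_k^{\dim T + \delta} \leq \eps$ for arbitrary $\delta, \eps > 0$; since $f \in C^\beta(T,\Rd)$, the graph $\pazocal{G}_T(f)$ restricted to $[t_k, t_k+c_k]$ lies in a box of time-length $c_k$ and space-side $\lesssim c_k^\beta$ in each of the $d$ coordinates.

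For the first bound I would cover each such box by $\alpha$-parabolic cylinders from $\mathcal{P}^\alpha$. In the regime $\alpha \in (0,1]$ a single $\alpha$-parabolic cylinder over $[t_k,t_k+c_k]$ has space-side $c_k^{1/\alpha}$; comparing with the required space-side $c_k^\beta$ one needs $\lceil c_k^{\beta - 1/\alpha}\rceil^d$ such cylinders (this count is $\gtrsim 1$ precisely when $\beta \le 1/\alpha$, which always holds for $\alpha\le1$, $\beta\le1$), each of diameter $\asymp c_k$. Summing $\sum_k c_k^{(\beta-1/\alpha)d} \cdot c_k^{\gamma}$ and choosing $\gamma$ so that the exponent equals $\dim T + \delta$ gives $\gamma = \dim T + \delta - (\beta - 1/\alpha)d = \dim T + (1/\alpha - \beta)d + \delta$, yielding $\ph_\alpha \le \dim T + (1/\alpha - \beta)d$. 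For $\alpha\in[1,1/\beta]$ the same computation with the cylinder's space-side now being $c_k^{1/\alpha}$ and diameter $\asymp c_k^{1/\alpha}$: one needs $\lceil c_k^{\beta-1/\alpha}\rceil^d$ cylinders (again $\ge 1$ since $\beta \le 1/\alpha$ iff $\alpha \le 1/\beta$), and $\sum_k c_k^{(\beta-1/\alpha)d}(c_k^{1/\alpha})^{\gamma} \le \eps$ forces $\gamma = \alpha(\dim T + \delta) - \alpha(\beta - 1/\alpha)d = \alpha\dim T + (1-\alpha\beta)d + \delta\alpha$, giving the bound $\alpha\dim T + (1-\alpha\beta)d$.

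The second family of bounds comes from the trivial cover of each time–space box of side $c_k^\beta$ (in space) and length $c_k$ (in time) by sub-cylinders whose time-length matches their space-side raised to $\alpha$. Here I would instead cover $\pazocal{G}_T(f)$ by boxes indexed so that a single box has space-side $c_k^\beta$ and split the time interval $[t_k,t_k+c_k]$ into $\lceil c_k^{1-\alpha\beta}\rceil$ (for $\alpha\le 1/\beta$) pieces of length $c_k^{\alpha\beta}$, forming genuine $\mathcal{P}^\alpha$-cylinders of diameter $\asymp c_k^\beta$ (for $\alpha\le1$) or $\asymp c_k^{\alpha\beta}$-type scaling. Optimizing the exponent against $\sum_k c_k^{\dim T + \delta}$ produces the $\dim T/(\alpha\beta)$ and $\dim T/\beta$ terms. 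When $\alpha \ge 1/\beta$, no time-splitting is needed — a single cylinder over $[t_k,t_k+c_k]$ already has space-side $c_k^{1/\alpha}\le c_k^\beta$ large enough — and one recovers $\ph_\alpha \le \alpha\dim T$ exactly as in Corollary \ref{cor:constant_function}. The third term $d+1$ in every case is immediate since $\pazocal{G}_T(f)\subseteq\R^{1+d}$ and the $\alpha$-parabolic Hausdorff dimension of $\R^{1+d}$ equals $d+1$ by the a priori bounds of Theorem \ref{le:General_Upper_Bound_Hausdorff} applied to, say, the unit cube (together with countable stability).

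The main obstacle I anticipate is bookkeeping the two competing notions of "diameter" of an $\alpha$-parabolic cylinder — space-side $c^{1/\alpha}$ versus time-length $c$ — which swap dominance at $\alpha = 1$, and simultaneously tracking when the covering-count $\lceil c^{\,\cdot}\rceil$ is genuinely $\gtrsim c^{\,\cdot}$ versus being $\asymp 1$ (the threshold is governed by the sign of $1/\alpha - \beta$, i.e. whether $\alpha \lessgtr 1/\beta$). Getting these case distinctions to align cleanly with the three displayed regimes $\alpha\in(0,1]$, $\alpha\in[1,1/\beta]$, $\alpha\in[1/\beta,\infty)$ — and checking that the minimum is attained by the stated expression in each — is the delicate part; the individual estimates are each routine cover-counting arguments of the type already carried out in Theorem \ref{le:General_Upper_Bound_Hausdorff} and Lemma \ref{thm:Nullfuntion}.
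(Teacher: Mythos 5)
Your covering constructions for the two non-trivial bounds in each regime coincide with the paper's proof: cover $T$ by intervals $\mathsf{T}_k$ with $\sum_k|\mathsf{T}_k|^{\tau}\leq\eps$ for $\tau>\dim T$, enclose the graph over each interval in a box of time-length $|\mathsf{T}_k|$ and space-side $C|\mathsf{T}_k|^{\beta}$, and then either tile that box by $\lceil C|\mathsf{T}_k|^{\beta-1/\alpha}\rceil^{d}$ cylinders of time-length $|\mathsf{T}_k|$, or absorb it into a single cylinder of time-length $C^{\alpha}|\mathsf{T}_k|^{\alpha\beta}$ (resp.\ $C^{\alpha}|\mathsf{T}_k|$ when $\alpha\geq1/\beta$); your exponent bookkeeping is exactly the paper's. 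Two small transpositions: for the single-cylinder cover the diameter is $\asymp|\mathsf{T}_k|^{\alpha\beta}$ when $\alpha\leq1$ (time dominates) and $\asymp|\mathsf{T}_k|^{\beta}$ when $\alpha\in[1,1/\beta]$ (space dominates), i.e.\ the reverse of what you wrote, and in the regime $\alpha\geq1/\beta$ the relevant inequality is $c_k^{1/\alpha}\geq c_k^{\beta}$. Since you nevertheless assign $\dim T/(\alpha\beta)$ to $\alpha\leq1$ and $\dim T/\beta$ to $\alpha\in[1,1/\beta]$, these are only slips of labels, not of substance.

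The genuine problem is your justification of the $d+1$ term. It is not true that the $\alpha$-parabolic Hausdorff dimension of $\R^{1+d}$ equals $d+1$: covering $[0,1]^{1+d}$ requires $\asymp c^{-1}c^{-d/\alpha}$ cylinders of time-length $c$, whose diameter is $\asymp c$ for $\alpha\leq1$ and $\asymp c^{1/\alpha}$ for $\alpha\geq1$, and a volume count gives $\mathcal{P}^\alpha\m\dim[0,1]^{1+d}=1+d/\alpha$ for $\alpha\leq1$ and $=d+\alpha$ for $\alpha\geq1$, both strictly larger than $d+1$ unless $\alpha=1$. So the ambient-space argument does not deliver the cap, and Theorem \ref{le:General_Upper_Bound_Hausdorff} does not give it either. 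The paper does not establish the cap inside this proof; it imports it from \eqref{eq:estimates_1} in Theorem \ref{thm:estimates_ph_alpha}. Be aware that this term is genuinely delicate: by Corollary \ref{cor:constant_function} a constant function on $T=[0,1]$ with $d=1$ has $\mathcal{P}^\alpha\m\dim\pazocal{G}_T(f_C)=\alpha$ for $\alpha\geq1$, which exceeds $d+1$ once $\alpha>2$, so the cap certainly cannot follow from the dimension of the ambient space and needs a separate (graph-specific) argument wherever it is claimed. Apart from this term, your argument is the paper's proof.
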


\begin{proof}
Let $\tau > \dim T$ and $\eps > 0$ be arbitrary. Then we can cover $T$ by intervals $(\mathsf{T}_k)_{k\in\N}$ with sidelength $|\mathsf{T}_k| < 1$ such that $\sum_{k=1}^\infty |\mathsf{T}_k|^\tau < \eps$. Since $f \in C^\beta\big(T,\Rd\big)$, we can cover $\pazocal{G}_T(f)$ by $(\mathsf{B}_k)_{k \in \N} \subseteq \R^{1+ d}$ where
\begin{equation*}
\mathsf{B}_k := \mathsf{T}_k \times \prod_{i=1}^d \big[x_{i,k},\ x_{i,k} + C\cdot |\mathsf{T}_k|^\beta\big] \quad \text{ for every } k \in \N.
\end{equation*}
Note that without loss of generality we may assume $C\geq1$ for the constant in the definition of H\"older continuity.

\emph{(i)} Let $\alpha \in (0,1]$. On the one hand, for every $k\in\N$ we can cover $\mathsf{B}_k$ by (several) $\alpha$-parabolic cylinders with sidelength $|\mathsf{T}_k|$ in time. Since $K \cdot |\mathsf{T}_k|^{1/\alpha} \geq C\cdot |\mathsf{T}_k|^\beta$ iff $K \geq C\cdot |\mathsf{T}_k|^{\beta - 1/\alpha}$
for some hypercubes $\square_{k,l}$ with sidelength $|\mathsf{T}_k|^{1/\alpha}$ we find a cover
\begin{equation*}
\pazocal{G}_T(f) \subseteq \bigcup_{k=1}^\infty \bigcup_{l=1}^{\big\lceil C\cdot |\mathsf{T}_k|^{\beta - 1/\alpha} \big\rceil^d} \mathsf{T}_k \times \square_{k, l}
\end{equation*}
with $\mathsf{T}_k \times \square_{k, l} \in \mathcal{P}^\alpha$ and $|\mathsf{T}_k \times \square_{k, l}|\asymp |\mathsf{T}_k|$ for every $k,l \in \N$. Now, for $\gamma = \tau + d \cdot ( 1/\alpha -\beta )$ we have
\begin{equation*}
\mathcal{P}^\alpha\m\pazocal{H}^\gamma ( \pazocal{G}_T(f)) \lesssim \sum_{k=1}^\infty |\mathsf{T}_k|^{d \cdot (\beta-1/\alpha) + \gamma} = \sum_{k=1}^\infty |\mathsf{T}_k|^\tau < \eps.
\end{equation*}
Since $\tau > \dim T$ is arbitrary, $\ph_\alpha \leq \dim T + d \cdot ( 1/\alpha - \beta).$
On the other hand, for every $k\in\N$ we can cover $\mathsf{B}_k$ by a single $\alpha$-parabolic cylinder with sidelength $C^{\alpha}\cdot |\mathsf{T}_k|^{\alpha\beta}$ in time. Then $\pazocal{G}_T(f) \subseteq \bigcup_{k\in\N}\ \mathsf{P}_{k}$ with $|\mathsf{P}_k|\asymp |\mathsf{T}_k|^{\alpha\beta}$.
Now, for $\gamma = \tau/(\alpha\beta)$ we have $\mathcal{P}^\alpha\m\pazocal{H}^\gamma ( \pazocal{G}_T(f)) \lesssim \sum_{k=1}^\infty |\mathsf{T}_k|^{\alpha\beta \cdot \gamma} < \eps$. Since $\tau > \dim T$ is arbitrary this results in $\ph_\alpha \leq \frac{\dim T}{\alpha\beta}$.\\
\emph{(ii)} Let $\alpha \in [1,1/\beta]$. On the one hand, for every $k\in\N$ we can cover $\mathsf{B}_k$ by (several) $\alpha$-parabolic cylinders with sidelength $|\mathsf{T}_k|$ in time. The covering sets from part \emph{(i)} now fulfill $|\mathsf{T}_k \times \square_{k, l}|\asymp |\mathsf{T}_k|^{1/\alpha}$ and 
for $\gamma = \alpha \cdot \tau + d \cdot (1 - \alpha\beta) $ we have
\begin{equation*}
\mathcal{P}^\alpha\m\pazocal{H}^\gamma ( \pazocal{G}_T(f)) \lesssim \sum_{k=1}^\infty |\mathsf{T}_k|^{d \cdot (\beta-1/\alpha) + \gamma/\alpha}=\sum_{k=1}^\infty |\mathsf{T}_k|^\tau< \eps.
\end{equation*}
Since $\tau > \dim T$ is arbitrary, this results in $\ph_\alpha \leq \alpha \cdot \dim T + d \cdot (1 - \alpha\beta)$.
On the other hand, as in part \emph{(i)}  for every $k\in\N$ we can cover $\mathsf{B}_k$ by a single $\alpha$-parabolic cylinder with sidelength $C^{\alpha}\cdot |\mathsf{T}_k|^{\alpha\beta}$ in time. Then the cover $\pazocal{G}_T(f) \subseteq \bigcup_{k\in\N}\ \mathsf{P}_{k}$ now fulfills $|\mathsf{P}_{k}|\asymp |\mathsf{T}_{k}|^\beta$ and
for $\gamma = \tau/\beta$ we have $\mathcal{P}^\alpha\m\pazocal{H}^\gamma ( \pazocal{G}_T(f)) \lesssim \sum_{k=1}^\infty |\mathsf{T}_k|^{\beta \cdot \gamma} < \eps$. Since $\tau > \dim T$ is arbitrary, this results in $ \ph_\alpha \leq \dim T / \beta$.\\
\textit{(iii)} Let $\alpha \in [1/\beta,\infty)$. 
For every $k\in\N$ we can cover $\mathsf{B}_k$ by a single $\alpha$-parabolic cylinder $\mathsf{P}_k$ with length $C^{\alpha}\cdot |\mathsf{T}_k|$ in time. Then $\pazocal{G}_T(f) \subseteq \bigcup_{k=1}^\infty \mathsf{P}_k$ 
with $|\mathsf{P}_k\asymp |\mathsf{T}_k|^{1/\alpha}$ and
for $\gamma \geq \alpha \cdot \tau$ we have
$
\mathcal{P}^\alpha\m\pazocal{H}^\gamma ( \pazocal{G}_T(f)) \lesssim \sum_{k=1}^\infty |\mathsf{T}_k|^{\gamma / \alpha} < \eps.
$
Since $\tau > \dim T$ is arbitrary, this results in $\ph_\alpha \leq \alpha \cdot \dim T$.
\end{proof}

Let us inspect the important case $\alpha =2$, i.e.\ we aim to get a bound for the Hausdorff dimension of the graph of Brownian motion plus $\beta$-Hölder continuous drift function over $T$.

\begin{cor}\label{cor:B+f_Upper_Bounds}
Let $T\subseteq \R_+$ be any set. Let $B=(B_t)_{t\geq0}$ denote the $d$-dimensional Brownian motion and let $f \in C^\beta\big(T,\Rd\big)$ for some $\beta \in (0,1]$. Then one $\Pb$-almost surely has
\begin{equation*}
\dim \pazocal{G}_T(B + f) \leq
\begin{cases}
d + \frac{1}{2}, & \beta \leq \frac{\dim T}{d} \wedge \frac{1}{2} \wedge \left(\dim T - \frac{1}{2}\right),\\
\dim T + d \cdot (1 - \beta), 	& \dim T - \frac{1}{2} \leq \beta \leq \left(\frac{\dim T}{d} \wedge \frac{1}{2}\right),\\
\frac{\dim T}{\beta},			& \frac{\dim T}{d} \leq \beta \leq \frac{1}{2},\\
\left(2 \cdot \dim T\right) \wedge \left(\dim T + \frac{d}{2}\right),					&  \beta \geq \frac{1}{2}.
\end{cases}
\end{equation*}
Moreover, one $\Pb$-almost surely has
\begin{equation*}
\dim \pazocal{R}_T(B + f) \leq
\begin{cases}
\frac{\dim T}{\beta},		& \frac{\dim T}{d} \leq \beta \leq \frac{1}{2},\\
\left(2 \cdot \dim T\right) \wedge d,	& \beta \geq \frac{1}{2},\\
d,					& \text{else}.
\end{cases}
\end{equation*}
\end{cor}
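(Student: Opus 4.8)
The plan is to specialise the already-established dimension formulas to the Gaussian exponent $\alpha=2$ and then simplify the resulting minima by an elementary case analysis in $\beta$. The only inputs are the exact formula for $\dim\pazocal{G}_T(X+f)$ from Theorem~\ref{thm:Main_Theorem_Hausdorff}, the exact formula for $\dim\pazocal{R}_T(X+f)$ from Theorem~\ref{thm:Main_theorem_range}, and the deterministic Hölder estimate for $\ph_2=\mathcal{P}^2\m\dim\pazocal{G}_T(f)$ from Theorem~\ref{thm:ph_Upper_bounds}; the ``$\Pb$-almost surely'' is inherited from the first two. Throughout I would use that $T\subseteq\R$ forces $\dim T\le 1$, which discards several candidate minima.

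For the graph, Theorem~\ref{thm:Main_Theorem_Hausdorff} with $\alpha=2$ gives $\dim\pazocal{G}_T(B+f)=\ph_2\wedge\big(\tfrac12\ph_2+\tfrac d2\big)$ $\Pb$-almost surely. Since $x\mapsto x\wedge\big(\tfrac x2+\tfrac d2\big)$ is non-decreasing, I may substitute the upper bound for $\ph_2$ coming from Theorem~\ref{thm:ph_Upper_bounds}, distinguishing $\beta\in(0,\tfrac12]$, where $\alpha=2\in[1,\tfrac1\beta]$ so $\ph_2\le\big(2\dim T+d(1-2\beta)\big)\wedge\tfrac{\dim T}{\beta}\wedge(d+1)$, from $\beta\in[\tfrac12,1]$, where $\alpha=2\in[\tfrac1\beta,\infty)$ so $\ph_2\le(2\dim T)\wedge(d+1)$. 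In the first regime I would note: $\ph_2\le d+1$ forces $\dim\pazocal{G}_T(B+f)\le\tfrac{d+1}{2}+\tfrac d2=d+\tfrac12$; feeding $\ph_2\le 2\dim T+d(1-2\beta)$ into $\tfrac12\ph_2+\tfrac d2$ produces $\dim T+d(1-\beta)$; and for $\beta\ge\tfrac{\dim T}{d}$ one has $\ph_2\le\tfrac{\dim T}{\beta}\le d$, whence $\dim\pazocal{G}_T(B+f)=\ph_2\le\tfrac{\dim T}{\beta}$ directly. Comparing these three candidates, with crossovers at $\beta=\dim T-\tfrac12$ and $\beta=\tfrac{\dim T}{d}$, yields the four stated cases. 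In the second regime, substituting $\ph_2\le 2\dim T$ into both arguments gives $(2\dim T)\wedge(\dim T+\tfrac d2)$, and the term $d+1$ is redundant because $\dim T+\tfrac d2\le d+\tfrac12$ by $\dim T\le 1$.

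For the range, Theorem~\ref{thm:Main_theorem_range} with $\alpha=2$ gives $\dim\pazocal{R}_T(B+f)=\ph_2\wedge d$ $\Pb$-almost surely (and $\le d$ trivially in any case). Intersecting the same upper bounds for $\ph_2$ with $d$ I would check: for $\beta<\tfrac{\dim T}{d}\wedge\tfrac12$ both $2\dim T+d(1-2\beta)$ and $\tfrac{\dim T}{\beta}$ are $\ge d$, so the bound is $d$; for $\tfrac{\dim T}{d}\le\beta\le\tfrac12$ one has $\tfrac{\dim T}{\beta}\le 2\dim T+d(1-2\beta)$ because the difference $2\dim T+d(1-2\beta)-\tfrac{\dim T}{\beta}$ is concave in $\beta$ and vanishes at the endpoints $\beta=\tfrac{\dim T}{d}$ and $\beta=\tfrac12$, so the bound is $\tfrac{\dim T}{\beta}$; and for $\beta\ge\tfrac12$ the bound is $(2\dim T)\wedge(d+1)\wedge d=(2\dim T)\wedge d$.

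The only genuine work is the bookkeeping of the minima: deciding which competing candidate is binding on each $\beta$-interval and verifying the elementary one-variable (convexity/concavity) inequalities at and between the crossover points $\dim T-\tfrac12$, $\tfrac{\dim T}{d}$, $\tfrac12$. Continuity of all candidate bounds in $\beta$ at those points serves as a convenient consistency check, and no further probabilistic input is needed.
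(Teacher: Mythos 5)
Your route is essentially the paper's: specialise to $\alpha=2$, feed the H\"older bound on $\ph_2$ from Theorem~\ref{thm:ph_Upper_bounds} into the graph and range formulas, and sort out the minima by elementary case analysis in $\beta$; the bookkeeping you describe (concavity of $2\dim T+d(1-2\beta)-\dim T/\beta$ vanishing at $\beta=\dim T/d$ and $\beta=\tfrac12$, redundancy of $d+1$ via $\dim T\le 1$) is correct and matches what the paper leaves implicit. The one point to repair is your choice of inputs: you invoke the exact formulas of Theorems~\ref{thm:Main_Theorem_Hausdorff} and~\ref{thm:Main_theorem_range}, which are stated only for \emph{Borel} sets $T$ and Borel measurable $f$, whereas the corollary allows $T\subseteq\R_+$ to be \emph{any} set. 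Since only upper bounds are needed, the paper instead cites Corollary~\ref{cor_Upper_bound} (giving $\dim\pazocal{G}_T(B+f)\le\ph_2\wedge\tfrac{\ph_2+d}{2}$) and Theorem~\ref{thm:Range_dim_X+f_leq_dim_f} (giving $\dim\pazocal{R}_T(B+f)\le\ph_2\wedge d$), both of which hold for arbitrary $T$ and $f$; substituting these for your two citations removes the hypothesis mismatch and also makes your monotonicity remark unnecessary. A second, cosmetic point: the crossover between the candidates $d+\tfrac12$ and $\dim T+d(1-\beta)$ actually occurs at $\beta=(\dim T-\tfrac12)/d$ rather than at $\dim T-\tfrac12$, but since both are valid upper bounds throughout the regime $\beta\le\tfrac{\dim T}{d}\wedge\tfrac12$ and $d\ge1$, the cases as stated in the corollary (and as you derive them) remain correct.
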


\begin{proof}
Let $\ph_2 = \mathcal{P}^2\m\dim \pazocal{G}_T (f)$. Corollary \ref{cor_Upper_bound} $\Pb$-almost surely yields
\begin{equation*}
\dim \pazocal{G}_T(B+f) \leq
\ph_2 \ \wedge \ \frac{\ph_2 + d}{2}
\end{equation*}
and Theorem \ref{thm:Range_dim_X+f_leq_dim_f} $\Pb$-almost surely yields
\begin{equation*}
\dim \pazocal{R}_T(B+f) \leq \ph_2 \ \wedge \ d.
\end{equation*}

Finally, by Theorem \ref{thm:ph_Upper_bounds} we easily get
\begin{equation*}
\ph_2 \leq
\begin{cases}
\left(2 \cdot \dim T + d\cdot (1 - 2 \beta)\right) \wedge \left(d + 1\right), 	& \beta \leq \frac{\dim T}{d} \wedge \frac{1}{2},\\
\frac{\dim T}{\beta}, 						& \frac{\dim T}{d} \leq \beta \leq \frac{1}{2},\\
2 \cdot \dim T, 							& \beta \geq \frac{1}{2}
\end{cases}
\end{equation*}
and the claim follows.
\end{proof}

\bibliographystyle{plain}

\end{document}